\newtheorem{theorem}{Theorem}[section]
\newtheorem{lemma}[theorem]{Lemma}
\newtheorem{proposition}[theorem]{Proposition}
\newtheorem{corollary}[theorem]{Corollary}
\theoremstyle{definition}
\newtheorem{definition}[theorem]{Definition}
\newtheorem{remark}[theorem]{Remark}
\numberwithin{equation}{section}
\newcommand{\blankbox}[2]
\begin{document}
\title[Inclusion relations between Wiener amalgam and classical spaces]
{Characterization of inclusion relations between Wiener amalgam and some classical spaces$^*$ }
\author{WEICHAO GUO}
\address{School of Mathematics and Information Sciences, Guangzhou University, Guangzhou, 510006, P.R.China}
\email{weichaoguomath@gmail.com}
\author{HUOXIONG WU}
\address{School of Mathematical Sciences, Xiamen University,
Xiamen, 361005, P.R. China} \email{huoxwu@xmu.edu.cn}
\author{QIXIANG YANG}
\address{School of Mathematics and Statics, Wuhan University,
Wuhan, 430072, China}
\email{qxyang@whu.edu.cn}
\author{GUOPING ZHAO}
\address{School of Applied Mathematics, Xiamen University of Technology,
Xiamen, 361024, China} \email{guopingzhaomath@gmail.com}

\thanks{$^*$ Partly supported by the NNSF of China (Grant Nos. 11371295, 11471041) and the NSF of Fujian Province of China (No. 2015J01025).}
\subjclass[2000]{42B35.}
\keywords{Wiener amalgam space; local Hardy space; Besov space; inclusion relation; characterization. }

\begin{abstract}
In this paper, we establish the sharp conditions for the inclusion relations between Besov spaces $B_{p,q}$ and Wiener amalgam spaces $W_{p,q}^s$.
We also obtain the optimal inclusion relations between local hardy spaces $h^p$ and Wiener amalgam spaces $W_{p,q}^s$,
which completely improve and extend the main results obtained by Cunanana, Kobayashib and Sugimotoa in [J. Funct. Anal. 268 (2015), 239-254].
In addition, we establish some mild characterizations of inclusion relations between Triebel-Lizorkin and Wiener amalgam spaces, which relates some modern inequalities to classical inequalities.
\end{abstract}

\maketitle

\section{INTRODUCTION}
Wiener amalgam spaces are a class of function spaces which amalgamates local properties with global properties.
Specific cases of these spaces was first introduced by Norbert Wiener in \cite{Wiener 1, Wiener 2, Wiener 3}.
In 1980s, H. G. Feichtinger developed a far-reaching generalization of amalgam spaces,
which allows a wide range of Banach spaces to serve as local or global components.
Feichtinger used $W(B,C)$ to denote the Wiener-type spaces, which $B$ and $C$ are served as the local and global component respectively.
Here, we consider a limited case, namely, the Wiener amalgam spaces $W_{p,q}^s$,
which can be re-expressed as $W(\mathscr{F}^{-1}L_q^s,L_p)$ by the notation of Feichtinger.
In addition, modulation space $M_{p,q}^s$ can be re-expressed by $\mathscr{F}^{-1}W(\mathscr{F}L_p,L_q^s)$.

By the frequency-uniform localization techniques, the modulation space can be viewed as a Besov-type space associated with a uniform decomposition (see Triebel \cite{Triebel_modulation space}, Wang-Huang \cite{WH_JDE_2007} ).
Like the modulation space, Wiener amalgam space $W_{p,q}^s$ can be viewed as a Triebel-Lizorkin-type space corresponding to a uniform decomposition,
we refer the readers to \cite{Triebel_modulation space} for this topic.
Due to the difference between uniform and dyadic decompositions, Wiener amalgam space $W_{p,q}$ has many properties different from the corresponding dyadic space.
For instance, the uniform multiplier $e^{i|D|^{\beta}}(0<\beta<1)$ is unbounded in the classical Lebesgue space $L_p$
but bounded on $W_{p,q}^s$ for $1\leqslant p,q\leqslant \infty$, $s\in \mathbb{R}$, one can see \cite{AKKL_JFA_2007, Cunanan_uimodular_wiener} for more details.
Thus, it is interesting to compare the Wiener amalgam spaces with the classical function spaces constructed by dyadic decompositions.
One basic but important problem is how to characterize the inclusion relations between theses two kinds of spaces.
A lot of attentions have been paid to this topic, for example, one can see \cite{Sugimoto_Tomita, Toft_Continunity, WH_JDE_2007} for the inclusion relations between modulation and Besov spaces, \cite{Guo_Zhao_interpolation, Wang_Han, Toft_embedding} for the more general inclusion relations under the frame of $\alpha$-modulation space, \cite{Lcal hardy and modulation_studia} for the inclusion relations between local Hardy spaces $h_p$ and modulation spaces, and \cite{sobolev and modulation} for the inclusion relation between Sobolev and modulation spaces, etc. In particular, Cunanana, Kobayashib and Sugimotoa \cite{Cunanan_embedding_Lp_wiener}
recently give some sufficient and necessary conditions for the inclusion relations between $W_{p,q}^s$ and $L_p$ in the range of $1\leqslant p,q \leqslant \infty$, they also give a corollary for the inclusion relations between $W_{p,q}^s$ and $B_{p,q}$ (the Besov space $B_{p,q}^0$, see (2.16) for the definition). We will recall the main results in \cite{Cunanan_embedding_Lp_wiener} after introducing following notations

Denote $\alpha(p,q)=0\vee n(1-1/p-1/q)\vee n(1/2-1/q)$, $\beta(p,q)=0\wedge  n(1-1/p-1/q)\wedge n(1/2-1/q)$, that is,
\begin{equation*}
  \alpha(p,q)=
  \begin{cases}
  0,  &\text{if } (1/p,1/q)\in A_1:\, 1/q\geqslant (1-1/p)\vee 1/2,\\
  n(1-1/p-1/q), &\text{if } (1/p,1/q)\in A_2:\, 1/p\leqslant (1-1/q)\wedge 1/2,\\
  n(1/2-1/q), &\text{if } (1/p,1/q)\in A_3:\, 1/q\leqslant 1/2\leqslant 1/p;
  \end{cases}
\end{equation*}
\begin{equation*}
  \beta(p,q)=
  \begin{cases}
  0,  &\text{if } (1/p,1/q)\in B_1:\, 1/q\leqslant (1-1/p)\wedge 1/2,\\
  n(1-1/p-1/q), &\text{if } (1/p,1/q)\in B_2:\, 1/p\geqslant (1-1/q)\vee 1/2, \\
  n(1/2-1/q), &\text{if } (1/p,1/q)\in B_3:\, 1/p\leqslant 1/2\leqslant 1/q.
  \end{cases}
\end{equation*}
One can see following figures for visualization.

\begin{tikzpicture}[scale=3]
%%%%%%%%%%%%%%%%%%%%%%%%%%%%%%%%%%%%%%%%%%%%%%%%%%%%%%%%%%%%%%%%%%%%%%%%%%%%%%%%%%%%%%%%%%%%%%%%%%%%%%%%%%%%%%%%%%
%                                             Figure 1                                                           %
%%%%%%%%%%%%%%%%%%%%%%%%%%%%%%%%%%%%%%%%%%%%%%%%%%%%%%%%%%%%%%%%%%%%%%%%%%%%%%%%%%%%%%%%%%%%%%%%%%%%%%%%%%%%%%%%%%
\coordinate (Origin) at (0,0);
\coordinate(horizontal_axis) at (1.2,0);
\coordinate(vertical_axis) at (0,1.2);
\coordinate (m01) at (0,1);
\coordinate (m0505) at (0.5,0.5);
\coordinate (m11) at (1.2,1.2);
\coordinate (m050) at (0.5,0);
\coordinate (m105) at (1.2,0.5);
\draw[thick,->] (0,0) -- (horizontal_axis) node[right]{$\frac{1}{p}$};%横轴
\draw[thick,->] (0,0) -- (vertical_axis) node[above]{$\frac{1}{q}$};%纵轴
\draw(0,0)node[left]{$0$};%Origin
%%%%%
\draw [very thick] (m0505)-- (m01) node [left] {$1$};%%%%%%%%从m0505点到m10点画粗线，并在m10点下方写 1.  %node后注释掉不要丢掉分号必须带着，只画线.
%\draw [black,dashed] (m11)--(Origin) node [left] {$0$};
\draw [very thick] (m0505)-- (m050) node [below] {$\frac{1}{2}$};
\draw [very thick] (m0505)-- (m105);
%\draw(0.8,0.6)node{$A_1$};
%\draw(0.5,0.8)node{$A_2$};
\draw(0.6,0.8)node{$A_1$};
%\draw(0.25,0.5)node{$B_1$};
%\draw(0.4,0.25)node{$B_2$};
\draw(0.25,0.4)node{$A_2$};
\draw(0.7,0.3)node{$A_3$};
\node at (0.6,-0.25){\small Figure 1};%figure 1 name
%%%%%%%%%%%%%%%%%%%%%%%%%%%%%%%%%%%%%%%%%%%%%%%%%%%%%%%%%%%%%%%%%%%%%%%%%%%%%%%%%%%%%%%%%%%%%%%%%%%%%%%%%%%%%%%%%%
%                                             Figure 2                                                           %
%%%%%%%%%%%%%%%%%%%%%%%%%%%%%%%%%%%%%%%%%%%%%%%%%%%%%%%%%%%%%%%%%%%%%%%%%%%%%%%%%%%%%%%%%%%%%%%%%%%%%%%%%%%%%%%%%%
\coordinate (2Origin) at (2,0);
\coordinate(2horizontal_axis) at (3.2,0);
\coordinate(2vertical_axis) at (2,1.2);
\coordinate (2m10) at (3,0);
\coordinate (2m0505) at (2.5,0.5);
\coordinate (2m11) at (3.2,1.2);
\coordinate (2m051) at (2.5,1.2);
\coordinate (2m005) at (2,0.5);
\draw[thick,->] (2Origin) -- (2horizontal_axis) node[right]{$\frac{1}{p}$};%横轴
\draw[thick,->] (2Origin) -- (2vertical_axis) node[above]{$\frac{1}{q}$};%纵轴
\draw(2,0)node[left]{$0$};%Origin
%%%%%
\draw [very thick] (2m0505)-- (2m10) node [below] {$1$};%%%%%%%%从m0505点到m10点画粗线，并在m10点下方写 1.  %node后注释掉不要丢掉分号必须带着，只画线.
%\draw [black,dashed] (2m11)--(2Origin) node [left] {$0$};
\draw [very thick] (2m0505)-- (2m051);
\draw [very thick] (2m0505)-- (2m005);
%\draw(2.6,0.8)node{$A_1$};
%\draw(2.8,0.5)node{$A_2$};
\draw(2.8,0.6)node{$B_2$};
%\draw(2.5,0.3)node{$B_1$};
%\draw(2.2,0.4)node{$B_2$};
\draw(2.35,0.3)node{$B_1$};
\draw(2.25,0.7)node{$B_3$};
\node at (2.6,-0.25){\small Figure 2};%figure 1 name

\end{tikzpicture}

We recall the main results in \cite{Cunanan_embedding_Lp_wiener} as follows.
\\\\
\textbf{Theorem A}(cf. \cite{Cunanan_embedding_Lp_wiener})
Let $1\leqslant p,q\leqslant \infty$, $s\in \mathbb{R}$. Then
\begin{enumerate}
 \item
  $W_{p,q}^{s} \subset B_{p,q}$
  if
  $s> \alpha(p,q)$.
  Conversely, if
  $W_{p,q}^{s} \subset B_{p,q}$,
  then
  $s\geqslant \alpha(p,q)$.
 \item
  $B_{p,q} \subset W_{p,q}^{s}$
  if
  $s< \beta(p,q)$.
  Conversely, if
  $B_{p,q} \subset W_{p,q}^{s}$,
  then
  $s\leqslant \beta(p,q)$.
\end{enumerate}
~\\
\textbf{Theorem B}(cf. \cite{Cunanan_embedding_Lp_wiener})
Let $1\leqslant p,q\leqslant \infty$, $s\in \mathbb{R}$. Then,
\begin{enumerate}
 \item
 The following statements about $W_{p,q}^{s} \subset L_p$ are true.
  \begin{enumerate}
  \item For $p\neq \infty , 1/q\geqslant 1/p\wedge 1/2$, $W_{p,q}^{s} \subset L_p$ if and only if $s\geqslant \alpha(p,q)$.
  \item For $p\neq \infty$, $1/q<1/p\wedge  1/2$,  $W_{p,q}^{s} \subset L_p$ if $s> \alpha(p,q)$, and $W_{p,q}^{s} \subset L_p$ implies $s\geqslant \alpha(p,q)$.
  \item For $p= \infty$, $W_{p,q}^{s} \subset L_p$ if and only if $s\geqslant \alpha(p,q)$ with strict inequality when $q\neq1$.
  \end{enumerate}
 \item
 The following statements about $L_p \subset W_{p,q}^s$ are true.
  \begin{enumerate}
  \item For $p\neq 1, 1/q\leqslant 1/p\vee 1/2,$, $L_p \subset W_{p,q}^s$ if and only if $s\leqslant \beta(p,q)$.
  \item For $p\neq 1$, $1/2\vee 1/p<1/q$, $L_p \subset W_{p,q}^s$ if $s< \beta(p,q)$, and $L_p \subset W_{p,q}^s$ implies $s\leqslant \beta(p,q)$.
  \item For $p= 1$, $L_p \subset W_{p,q}^s$ if and only if $s\leqslant \beta(p,q)$ with strict inequality when $q\neq \infty$.
  \end{enumerate}
\end{enumerate}

Clearly, there exists a gap in Theorem A, or (b) of Theorem B for the endpoint: $s=\alpha(p,q)$ or $s=\beta(p,q)$, namely, it remains to be open whether $s=\alpha(p,q)$ or $s=\beta(p,q)$ is sufficient for the corresponding inclusion relations. Moreover, what about the inclusion relations, in Theorems A and B, for the case: $0<p,\,q<1$? The main purpose of this paper is to address the questions above. Our results can be formulated as follows.

%In Theorem A and B, there still remain some questions whether the critical $s=\alpha(p,q)$ or $s=\beta(p,q)$ is sufficient or not for the inclusion relations.
%In this paper, we will continue the study of inclusion relations between Wiener and some classical spaces in both abstract and concrete levels.
%In the abstract level, we give some mild characterization of the embedding relation between Wiener amalgam and Triebel spaces, which set up a bridge between some "classical" and "modern" %inequalities.
%In the concrete level, as a complete improvement of Theorem A,
%we give a complete picture of inclusion relation between Wiener amalgam space $W_{p,q}^s$ and Besov space $B_{p,q}$ in the full range $0< p,q \leqslant \infty$.
%We also improve and extend Theorem B to the full range $0<p<\infty, 0<q\leqslant \infty$.
%The following theorems are our main results.

\begin{theorem}\label{theorem, embedding between Wiener and Besov}
Let $0<p,q\leqslant \infty$, $s\in \mathbb{R}$. Then
\begin{enumerate}
 \item
  $W_{p,q}^{s} \subset B_{p,q}$
  if and only if
  $s\geqslant \alpha(p,q)$ with strict inequality when $1/p< 1/q$.
 \item
  $B_{p,q} \subset W_{p,q}^{s}$
  if and only if
  $s\leqslant \beta(p,q)$ with strict inequality when $1/p> 1/q$.
\end{enumerate}
\end{theorem}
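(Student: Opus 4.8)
The plan is to move both inclusions onto the level of the two decompositions involved --- the uniform (unit-cube) decomposition $\{\square_k\}_{k\in\mathbb{Z}^n}$ built into $W_{p,q}^s$ and the dyadic Littlewood--Paley decomposition $\{\Delta_j\}_{j\geqslant 0}$ built into $B_{p,q}$ --- and to split the work into a scale-localized ``block inequality'' plus a resummation over dyadic shells. Starting from the discrete descriptions $\|f\|_{W_{p,q}^s}\asymp\bigl\|\bigl(\sum_k\langle k\rangle^{sq}|\square_k f|^q\bigr)^{1/q}\bigr\|_{L^p}$ and $\|f\|_{B_{p,q}}\asymp\bigl(\sum_{j\geqslant 0}\|\Delta_j f\|_{L^p}^q\bigr)^{1/q}$, and the fact that $\square_k\Delta_j\neq 0$ only for $|k|\sim 2^j$ (so that the shell $|\xi|\sim 2^j$ carries $\sim 2^{jn}$ unit cubes), the problem collapses to the two estimates, uniform in $j$,
\[
\Bigl\|\sum_{|k|\sim 2^j}\square_k g\Bigr\|_{L^p}\lesssim 2^{j\alpha(p,q)}\Bigl\|\bigl(\textstyle\sum_{|k|\sim 2^j}|\square_k g|^q\bigr)^{1/q}\Bigr\|_{L^p},\qquad
\Bigl\|\bigl(\textstyle\sum_{|k|\sim 2^j}|\square_k g|^q\bigr)^{1/q}\Bigr\|_{L^p}\lesssim 2^{-j\beta(p,q)}\|g\|_{L^p},
\]
valid for $g$ with $\widehat g$ supported in $|\xi|\sim 2^j$, followed by the resummation over $j$. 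The resummation is free exactly when Minkowski's inequality puts the $L^p(\ell^q)$ structure on the favourable side --- i.e.\ when $1/p\geqslant 1/q$ for (1) and $1/p\leqslant 1/q$ for (2) --- and in the complementary range the strict inequality supplies precisely the geometric factor $2^{-j\varepsilon}$ needed to sum the series. This already explains why the critical exponents $\alpha(p,q)$, $\beta(p,q)$ suffice on one side of the diagonal $1/p=1/q$ and must be improved on the other.

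For the sufficiency direction of (1), I would first bound $\|\Delta_j f\|_{L^p}\lesssim\|\sum_{|k|\sim 2^j}\square_k f\|_{L^p}$ by the standard maximal estimate for band-limited functions, valid for all $0<p\leqslant\infty$ and uniform in the frequency scale. The heart is then the first block inequality, proved by cases according to whether $(1/p,1/q)$ lies in $A_1$, $A_2$ or $A_3$: on $A_3$ ($p\leqslant 2\leqslant q$) combine a reverse square-function inequality of Rubio de Francia type, $\|\sum_{|k|\sim 2^j}\square_k g\|_{L^p}\lesssim\|(\sum|\square_k g|^2)^{1/2}\|_{L^p}$, with H\"older in $k$ to pass from $\ell^2$ to $\ell^q$, at a cost of exactly $2^{jn(1/2-1/q)}$; on $A_2$ ($p\geqslant 2$, $1/p+1/q\leqslant 1$) use Hausdorff--Young together with the disjointness of the cube spectra and H\"older in $k$, producing $2^{jn(1-1/p-1/q)}$; and on $A_1$ the two mechanisms combine to give a uniform bound with no growth. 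After matching the weight $\langle k\rangle^{s}\sim 2^{js}$ at $s=\alpha(p,q)$ and resumming as above, this yields $W_{p,q}^{\alpha(p,q)}\subset B_{p,q}$ when $1/p\geqslant 1/q$ and $W_{p,q}^{\alpha(p,q)+\varepsilon}\subset B_{p,q}$ in general. Part (2) is entirely analogous, built on the second (reverse) block inequality and the same summation dichotomy; in the Banach range it may alternatively be obtained from (1) by transposing the duality bracket, but the direct argument is needed for $p<1$ or $q<1$, where duality and Hausdorff--Young must be replaced by band-limited and $p$-triangle estimates.

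For the necessity direction, the bounds $s\geqslant\alpha(p,q)$ and $s\leqslant\beta(p,q)$ come from a two-parameter family of test functions supported on a single shell: with $\widehat{f_{N,M}}$ a smoothed indicator of a cube of side $M\leqslant N$ at distance $N$ from the origin, taken either with coherent phases (giving $\|f\|_{L^p}\sim M^{n(1-1/p)}$) or with random signs (giving $\|f\|_{L^p}\sim M^{n/2}$), one computes $\|f\|_{B_{p,q}}/\|f\|_{W_{p,q}^s}\sim N^{-s}M^{n(\max(1-1/p,1/2)-1/q)}$, and letting $N,M\to\infty$ forces $s\geqslant\alpha(p,q)$ (resp.\ $s\leqslant\beta(p,q)$ for the reverse inclusion), the dominant test matching the three cases in the definition of $\alpha$. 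The sharp endpoint --- that $s=\alpha(p,q)$ fails precisely when $1/p<1/q$ and $s=\beta(p,q)$ fails precisely when $1/p>1/q$ --- is the delicate point, and, as the summation dichotomy predicts, it is realized by a disjoint superposition $f=\sum_j c_j\,T_{x_j}g_j$ of normalized single-scale extremizers $g_j$ (Fourier support in $|\xi|\sim 2^j$, spatially concentrated near the origin, $\|g_j\|_{W_{p,q}^{\alpha}}\sim\|g_j\|_{B_{p,q}}\sim 1$) translated to well-separated positions $x_j$. Because the shells are disjoint in frequency and the bumps are disjoint in space, the $L^p(\ell^q)$ mixing in the $W$-norm degenerates to $\ell^p$ in the coefficients while the $B$-norm sees $\ell^q$, so $\|f\|_{W_{p,q}^{\alpha}}\sim\|(c_j)\|_{\ell^p}$ and $\|f\|_{B_{p,q}}\sim\|(c_j)\|_{\ell^q}$; choosing $(c_j)\in\ell^p\setminus\ell^q$, possible exactly when $p>q$, i.e.\ $1/p<1/q$ (and dually for (2), with the roles of $p$ and $q$ exchanged), breaks the inclusion at the critical $s$. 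The main obstacles are therefore (i) proving the two block inequalities with constant independent of $j$ in every regime, including the quasi-Banach range $p<1$ or $q<1$, and (ii) carrying out the bookkeeping of the endpoint superposition carefully enough to read off the strict failure; the remaining steps are routine.
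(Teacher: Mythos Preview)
Your overall strategy is sound, and the necessity argument --- single-shell test functions with coherent or random phases to obtain the threshold $s\geqslant\alpha(p,q)$, then a spatially-separated superposition $\sum_j c_j T_{x_j} g_j$ of single-scale extremizers to force strict inequality when $1/p<1/q$ --- is essentially what the paper does (Propositions~4.1, 4.2, and~5.3, the last being the Khinchin/random-sign step).  For sufficiency you organize things differently from the paper: it proves a handful of endpoint embeddings (notably $W_{p,\infty}^{n/2}\subset B_{p,\infty}$ and $W_{p,2}\subset B_{p,2}$ for $p\leqslant2$, together with the diagonal $W_{p,p}^{\alpha(p,p)}=M_{p,p}^{\alpha(p,p)}\subset B_{p,p}$) and then interpolates, whereas you aim for a single scale-localized block inequality followed by the Minkowski/geometric resummation.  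Both routes are viable and rest on the same key ingredient, and that ingredient is where your proposal has a real gap.

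The step you call ``a reverse square-function inequality of Rubio de Francia type'',
\[
\Bigl\|\sum_{|k|\sim 2^j}\square_k g\Bigr\|_{L^p}\lesssim\Bigl\|\bigl(\textstyle\sum_k|\square_k g|^2\bigr)^{1/2}\Bigr\|_{L^p}\qquad(p\leqslant 2),
\]
is indeed available for $1<p\leqslant2$ as the dual of Rubio de Francia's inequality for arbitrary intervals, but for $0<p\leqslant1$ there is no such off-the-shelf result: Rubio de Francia's proof is built on $L^{p'}$ duality and Calder\'on--Zygmund theory, both of which collapse below $p=1$, and the ``band-limited and $p$-triangle estimates'' you allude to yield at best an $\ell^p$ bound on the pieces, not the critical $\ell^2$ bound.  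On a fixed dyadic shell this inequality is equivalent to the embedding $W_{p,2}\subset h_p$ for $0<p\leqslant2$, which is precisely the paper's Proposition~3.6 and its main technical novelty.  The paper's proof does not go through square-function theory at all: it uses a localization principle for $W_{p,q}^s$ (Proposition~3.2) to reduce to functions of fixed compact \emph{spatial} support, where the Wiener norm collapses to $\|\widehat f\|_{L^2}$ and $\|f\|_{h_p}$ is controlled by splitting into a near part (H\"older from $L^p$ to $L^2$ on a bounded set) and a far part (pointwise decay of $\Delta_j f$ away from the support).  Once you have this input, your block-inequality-plus-resummation scheme goes through cleanly; without it, region $A_3$ (and hence $A_1$) with $p\leqslant1$ remains unproved.
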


\begin{theorem}\label{theorem, embedding between Wiener and hp}
Let $0<p<\infty$, $0<q\leqslant \infty$, $s\in \mathbb{R}$.  Then
\begin{enumerate}
 \item
  $W_{p,q}^{s} \subset h_p$
  if and only if
  $s\geqslant \alpha(p,q)$
  with strict inequality when
  $1/q<1/p\wedge 1/2$.
  \item
  $h_p \subset W_{p,q}^{s}$
  if and only if
  $s\leqslant \beta(p,q)$
  with strict inequality when
  $1/q> 1/p\vee 1/2$.
\end{enumerate}
\end{theorem}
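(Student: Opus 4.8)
The plan is to establish both inclusions by comparing the two natural block decompositions: the dyadic Littlewood--Paley pieces $\Delta_j f$ that underlie $h_p=F^0_{p,2}$ (so $\|f\|_{h_p}\approx\big\|(\sum_{j\ge0}|\Delta_jf|^2)^{1/2}\big\|_{L_p}$), and the unit--cube frequency pieces $\Box_k f$ that underlie the Wiener amalgam norm, $\|f\|_{W^s_{p,q}}\approx\big\|(\sum_{k\in\mathbb Z^n}\langle k\rangle^{sq}|\Box_kf|^q)^{1/q}\big\|_{L_p}$. The only combinatorial input is that the $j$-th dyadic ring meets $\#\Lambda_j\sim 2^{jn}$ unit cubes (finitely many for $j=0$); this single fact generates $\alpha(p,q)$ and $\beta(p,q)$, since passing between an $\ell^q$--sum over $\Lambda_j$ and the single block $\Delta_jf$ costs a factor $2^{jn\theta}$ with $\theta\in\{0,\,1-1/p-1/q,\,1/2-1/q\}$, according to whether the block is controlled in $L_p$ by the quasi--triangle inequality, by H\"older over $\Lambda_j$, or by frequency--disjoint $L^2$--orthogonality interpolated with the ambient $L_p$. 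For $1<p<\infty$ one has $h_p=L_p$, so that range of part~(1) is already contained in Theorem~B apart from the endpoint assertion ``$s=\alpha(p,q)$ is not sufficient when $1/q<1/p\wedge1/2$''; the genuinely new ranges are $0<p\le1$ and $0<q<1$, where the $L_p$--duality exploited in Theorem~B is unavailable and one must work directly with the square function and, for the converse inclusion, with the atomic/maximal--function characterisation of $h_p$.

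For the sufficiency half of part~(1) I would discretise and reduce to the vector--valued estimate $\|\{\Delta_jf\}_j\|_{L_p(\ell^2)}\lesssim\|\{\langle k\rangle^{\alpha(p,q)}\Box_kf\}_k\|_{L_p(\ell^q)}$. Writing $\Delta_jf=\sum_{k\in\Lambda_j}\Box_k\widetilde\Delta_jf$ and dominating each $\Box_k\widetilde\Delta_jf$ by a maximal function of $\Box_kf$ via a Nikolskii--Plancherel--P\'olya inequality, the problem becomes a fixed--$j$ estimate comparing the $\ell^q$--norm over $\Lambda_j$ with the $\ell^2$--norm in $j$, arranged so that the $2^{jn\theta}$ loss is absorbed by $\langle k\rangle^{\alpha}\sim2^{j\alpha}$ (with a geometric gain in $j$ whenever $s>\alpha(p,q)$). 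The endpoint $s=\alpha(p,q)$ survives precisely on the closed set $1/q\ge1/p\wedge1/2$: when $1/q\ge1/2$ one first replaces $\ell^2$ by $\ell^q$ in the $j$--index for free and is left with the quasi--triangle inequality (if $q\le1$) or an $L^2$--orthogonality/H\"older argument on $\Lambda_j$ (if $1<q\le2$); when $1/q\ge1/p$ one instead sums first in $k$ and then in $j$, losing nothing; whereas in the complementary region no such gain is available at $s=\alpha$, the $j$--sum diverges, and $s>\alpha(p,q)$ is both necessary and sufficient. Part~(2) follows for $1<p<\infty$ from part~(1) by duality, using $(W^s_{p,q})'=W^{-s}_{p',q'}$, $(h_p)'=h_{p'}$ and the identity $\beta(p,q)=-\alpha(p',q')$ (handling $q=\infty$ directly), and for $0<p\le1$ by the mirror--image direct estimates $\|\{\langle k\rangle^{\beta(p,q)}\Box_kf\}_k\|_{L_p(\ell^q)}\lesssim\|\{\Delta_jf\}_j\|_{L_p(\ell^2)}$, the endpoint being admissible exactly on $1/q\le1/p\vee1/2$.

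For necessity in part~(1) I would test the inclusion on three single--ring families supported in $|\xi|\sim N$, with $\#\Lambda_N\sim N^n$ unit cubes. A single bump $\widehat{f_N}=\phi(\cdot-k_0)$, $|k_0|=N$, gives $\|f_N\|_{h_p}\sim1$ against $\|f_N\|_{W^s_{p,q}}\sim N^{s}$, forcing $s\ge0$; the ``all $+$'' sum $\widehat{f_N}=\sum_{k\in\Lambda_N}\phi(\cdot-k)$, whose inverse transform is essentially a product of Dirichlet kernels times a Schwartz bump, gives $\|f_N\|_{h_p}\sim\|f_N\|_{L_p}\sim N^{n(1-1/p)}$ against $\|f_N\|_{W^s_{p,q}}\sim N^{s+n/q}$, forcing $s\ge n(1-1/p-1/q)$; and a random--sign sum $\widehat{f_N}=\sum_{k\in\Lambda_N}\varepsilon_k\phi(\cdot-k)$ gives, for a suitable choice of signs (Khintchine), $\|f_N\|_{h_p}\sim\|f_N\|_{L_p}\gtrsim N^{n/2}$ against the same $N^{s+n/q}$, forcing $s\ge n(1/2-1/q)$; together $s\ge\alpha(p,q)$. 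For the strict inequality when $1/q<1/p\wedge1/2$ (equivalently $q>2$ and $q>p$) I would superpose the extremal blocks over a lacunary sequence of scales: $\widehat f=\sum_l c_l\widehat{G_l}$ with $G_l$ the relevant block at scale $R_l$, $R_l\uparrow\infty$ fast enough that the $G_l$ occupy pairwise disjoint dyadic rings. Setting $d_l=c_lR_l^{n(1-1/p)}$ or $d_l=c_lR_l^{n/2}$ as appropriate, one gets $\|f\|_{W^{\alpha(p,q)}_{p,q}}\approx\|\{d_l\}\|_{\ell^q}$, while the square--function description of $h_p$ yields $\|f\|_{h_p}\approx\|\{d_l\}\|_{\ell^p}$ when $1/p\le1/2$ (``all $+$'' blocks) and $\|f\|_{h_p}\approx\|\{d_l\}\|_{\ell^2}$ when $1/p\ge1/2$ (random--sign blocks); since $q>p$ in the first case and $q>2$ in the second, choosing $\{d_l\}$ in $\ell^q\setminus\ell^p$ (resp. $\ell^q\setminus\ell^2$) produces $f\in W^{\alpha(p,q)}_{p,q}\setminus h_p$. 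Part~(2) is treated in the same spirit (duality for $1<p<\infty$, the dual lacunary construction for $0<p\le1$).

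The main obstacle, and what is new compared with Theorems~A and~B, is the endpoint analysis. On the positive side one must show that $s=\alpha(p,q)$ already suffices on the entire closed region $1/q\ge1/p\wedge1/2$, which needs the sharp (loss--free) form of the block estimates and a genuine case split $q\le1$ / $1<q\le2$ / $q\ge2$ in the $\ell^q$--versus--$\ell^2$ bookkeeping. On the negative side one must make the lacunary counterexample balance exactly: the delicate point is the lower bound for $\|f\|_{h_p}$, where for $1/p\le1/2$ one has to verify that the heavy Dirichlet tails coming from different scales do not interfere near the origin, and for $1/p\ge1/2$ one needs the random signs to provide a \emph{lower} bound on the square function (a Paley--Zygmund argument, or a Rudin--Shapiro selection of signs), not merely the Khintchine upper bound. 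Once these two points are secured, descending below $p=1$ and $q=1$ is routine, the square--function and atomic descriptions of $h_p$ replacing the $L_p$--duality used in the classical range.
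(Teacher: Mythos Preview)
Your necessity programme matches the paper's closely: the three test families (single bump, full dyadic ring, random--sign ring) are the content of Propositions~5.2 and~5.3, and your lacunary superposition encodes exactly the sequence--space embeddings $l_q^{s+n/q,1}\subset l_p^{n(1-1/p),1}$ and $l_q^{s,0}\subset l_2^{0,0}$ together with Lemmas~2.9--2.10.

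The sufficiency strategy, however, differs substantially from the paper's and has a genuine gap. The paper does \emph{not} attempt a direct vector--valued inequality $\|\{\Delta_jf\}\|_{L_p(\ell^2)}\lesssim\|\{\langle k\rangle^{\alpha}\Box_kf\}\|_{L_p(\ell^q)}$. Instead it establishes a localization principle (Propositions~3.1--3.2): both the $h_p$-- and $W^s_{p,q}$--norms are equivalent to $\big(\sum_k\|\psi_kf\|^p\big)^{1/p}$ for a fixed spatial partition of unity, so the inclusion reduces to the scalar inequality $\|g\|_{h_p}\lesssim\|\widehat g\|_{L^s_q}$ for $g$ supported in a fixed ball. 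The key endpoint $W_{p,2}\subset h_p$ for $p\le2$ then becomes $\|g\|_{h_p}\lesssim\|g\|_{L_2}$ for compactly supported $g$ (Proposition~3.6), proved by H\"older near the support and rapid decay of each $\Delta_jg$ away from it. Your pointwise block comparison cannot recover this: the inequality $(\sum_j|\Delta_jf|^2)^{1/2}\lesssim(\sum_k|\Box_kf|^2)^{1/2}$ fails pointwise because of constructive interference among the $\sim2^{jn}$ unit blocks inside a single ring, and the ``$L^2$--orthogonality interpolated with the ambient $L_p$'' you invoke is an $L^2$--norm statement, not a pointwise one. Without the localization step there is no evident way to reach the loss--free endpoint when $p\le1$.

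For the converse direction $h_p\subset W^{\beta(p,q)}_{p,q}$ with $0<p\le1$, the paper again avoids a direct square--function comparison and uses the atomic decomposition: Proposition~5.1 shows $\|a\|_{W^{n(1-1/p-1/q)}_{p,q}}\lesssim1$ uniformly over $h_p$--atoms $a$, via a near--cube/far--cube split in which the moment conditions produce the factor $(l(Q)\langle\xi\rangle)^{N+1}$ needed to make the $L^{n(1-1/p-1/q)}_q$--integral converge. Your ``mirror--image direct estimates'' would face the same pointwise obstacle as above; the atomic route is not cosmetic but is precisely what converts the cancellation of $h_p$ into the weight $\langle k\rangle^{\beta}$ on the frequency side. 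Duality covers only $1<p<\infty$, so for $p\le1$ the plan as written is incomplete.
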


\begin{theorem}\label{theorem, embedding between Wiener and L1 or L infty}
Let $0<q\leqslant \infty$, $s\in \mathbb{R}$. Then,
\begin{enumerate}
 \item
  $W_{1,q}^{s} \subset L_1$
  if and only if
  $s\geqslant \alpha(1,q)$
  with strict inequality when
  $1/q<1/2$.
  \item
  $W_{\infty,q}^{s} \subset L_{\infty}$
  if and only if
  $s\geqslant \alpha(\infty,q)$
  with strict inequality when
  $1/q<1$.
  \item
  $L_1 \subset W_{1,q}^{s}$
  if and only if
  $s\leqslant \beta(1,q)$
  with strict inequality when
  $q\neq \infty$.
  \item
  $L_{\infty} \subset W_{\infty,q}^{s}$
  if and only if
  $s\leqslant \beta(\infty,q)$
  with strict inequality when
  $1/q>1/2$.
\end{enumerate}
\end{theorem}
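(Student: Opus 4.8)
The plan is to treat each of the four statements by separating a sufficiency half (the inclusion holds) from a necessity half (the condition on $s$ cannot be relaxed), and to cut the work using two remarks. First, because $h_1\hookrightarrow L_1$ classically, the inclusion $W_{1,q}^{s}\subset h_1$ already available from Theorem~\ref{theorem, embedding between Wiener and hp} (with $p=1$, where the clause $1/q<1/p\wedge 1/2$ becomes $1/q<1/2$) gives the sufficiency half of (1) for free; hence the genuinely new content of (1) is its necessity half — exhibiting $f\in W_{1,q}^{s}\setminus L_1$, which is more demanding than merely leaving $h_1$. Second, since $(W_{p,q}^{s})^{*}=W_{p',q'}^{-s}$ and $(L_1)^{*}=L_\infty$, statements (3) and (4) are dual to (2) and (1) respectively on the range $1<q<\infty$; the values $q=\infty$ and $0<q<1$ I would handle directly.

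For the sufficiency halves I would work with the uniform decomposition $\{\square_k\}$ and the norm identity $\|f\|_{W_{p,q}^{s}}\approx\big\|\big(\sum_k\langle k\rangle^{sq}|\square_k f|^{q}\big)^{1/q}\big\|_{L_p}$, relying on two pointwise bounds: the elementary $|\square_k f(x)|\le(|f|*|\rho|)(x)$ for a fixed Schwartz $\rho$, and — after writing $\square_k f(x)=e^{ik\cdot x}\widehat{f\rho(x-\cdot)}(k)$ and periodising — the square-function estimate $\big(\sum_k|\square_k f(x)|^{2}\big)^{1/2}\lesssim\|f\|_{L_\infty}$. The first, combined with $\sum_k\langle k\rangle^{sq}<\infty\iff s<-n/q=\beta(1,q)$, yields $L_1\subset W_{1,q}^{s}$ in (3). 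Hölder in the index $k$ turns $\|f\|_{L_\infty}\le\big\|\sum_k|\square_k f|\big\|_{L_\infty}$ into $\big(\sum_k\langle k\rangle^{-sq'}\big)^{1/q'}\|f\|_{W_{\infty,q}^{s}}$, finite exactly for $s>n/q'=\alpha(\infty,q)$ when $q>1$ (and $\ell^{q}\hookrightarrow\ell^{1}$ covers $q\le1$), giving (2). The square-function bound, combined with $\ell^{2}\hookrightarrow\ell^{q}$ when $q\ge2$ and with Hölder at exponents $2/q$, $2/(2-q)$ when $q<2$ (the auxiliary series $\sum_k\langle k\rangle^{2sq/(2-q)}$ converging precisely for $s<n(1/2-1/q)=\beta(\infty,q)$), gives (4). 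In every case the borderline strictness is forced by the logarithmic divergence of $\sum_{|k|\le N}\langle k\rangle^{-n}$.

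For the necessity halves I would use three families of test functions, one per regime of $\alpha,\beta$: (i) a single modulated bump $e^{ik\cdot x}\phi$ with $|k|\to\infty$, which kills the regions where $\alpha$ or $\beta$ vanishes (here $\|e^{ik\cdot x}\phi\|_{W_{p,q}^{s}}\approx\langle k\rangle^{s}$ while $\|e^{ik\cdot x}\phi\|_{L_p}\approx1$); (ii) the concentrated bump $g_N=N^{n}(\mathscr F^{-1}\chi)(N\cdot)$, $\chi\in C_c^\infty$, with Fourier support the ball of radius $N$, for which $\|g_N\|_{L_1}\approx1$, $\|g_N\|_{L_\infty}\approx N^{n}$ and $\|g_N\|_{W_{p,q}^{s}}\approx N^{(s+n/q)\vee0}$ (and $\approx(\log N)^{1/q}$ on the border), pinning down the $n(1-1/p-1/q)$-threshold in (2) and the $-n/q$-threshold in (3); (iii) the random-sign sum $f_N=\sum_{|k|\le N}\varepsilon_k e^{ik\cdot x}\phi$, for which Khintchine's inequality gives $\mathbb E\|f_N\|_{L_1}\approx N^{n/2}$ while $\|f_N\|_{W_{1,q}^{s}}\approx N^{s+n/q}$, producing the $n(1/2-1/q)$-threshold in (1). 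To sharpen a non-strict threshold to the claimed strict one I would insert logarithmically decaying coefficients: with $\langle k\rangle^{-n/2}\varepsilon_k$ in (iii), at $s=n(1/2-1/q)$ one gets $\|f_N\|_{W_{1,q}^{s}}\approx(\log N)^{1/q}$ against $\|f_N\|_{L_1}\approx(\log N)^{1/2}$, so the embedding must fail since $\tfrac12-\tfrac1q>0$ when $q>2$; a deterministic analogue $\sum_{|k|\le N}\langle k\rangle^{-n}e^{ik\cdot x}\rho$ evaluated at $0$ does the same for (2) with exponent $1-\tfrac1q$; and for the worst endpoint, that of (4) (and its whole strict range when $q<1$, where duality is unavailable), I would instead take a truncated Bessel potential $G_{n/2}(\cdot-x_0)\mathbf 1_{|\cdot-x_0|>1}$, which is bounded yet satisfies $|\square_k f(x_0)|\approx\langle k\rangle^{-n/2}$, so that $\sum_k\langle k\rangle^{sq}|\square_k f(x_0)|^{q}$ diverges exactly when $s\ge n(1/2-1/q)$.

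I expect the hard part to be the endpoint necessity, for two reasons. The logarithmic (or Bessel-potential) constructions must be calibrated so that one quasinorm grows like a positive power of $\log N$ while the other stays bounded, and this works only because the relevant exponent gap ($\tfrac12-\tfrac1q$ or $1-\tfrac1q$) has the right sign at the endpoint. Moreover — and this is exactly what separates Theorem~\ref{theorem, embedding between Wiener and L1 or L infty} from Theorem~\ref{theorem, embedding between Wiener and hp} at $p\in\{1,\infty\}$ — one must check that these examples genuinely escape $L_1$ and $L_\infty$ and not merely $h_1$ and $\mathrm{bmo}$; here I would use that a function whose Fourier transform lies in a fixed ball, or in a ball of radius $N$ but assembled from unit-scale blocks, has comparable $L_1$ and $h_1$ (respectively $L_\infty$ and $\mathrm{bmo}$) norms, so the $h_1$/$\mathrm{bmo}$ computations transfer to $L_1$/$L_\infty$ without loss.
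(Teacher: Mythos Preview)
Your overall strategy is sound and in several places more explicit than the paper's own argument, which instead leans heavily on the already-proved Theorems~\ref{theorem, embedding between Wiener and Besov} and~\ref{theorem, embedding between Wiener and hp}, on Theorem~B from \cite{Cunanan_embedding_Lp_wiener}, and on interpolation/contradiction rather than direct constructions. In particular your direct H\"older argument for the sufficiency of (2), your concentrated-bump family $g_N$ for the necessity of (3), and your deterministic logarithmic example $\sum_{|k|\le N}\langle k\rangle^{-n}e^{ik\cdot x}\rho$ for the strict necessity of (2) all work and give a more self-contained proof than the paper's. Your use of Khinchin's inequality for the necessity of (1) is essentially the same device the paper uses (its Proposition~\ref{proposition, forbidden for wiener by probability}).

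There is, however, a genuine gap in your treatment of the necessity of (4) when $0<q\le 1$, where duality is unavailable. The truncated Bessel potential $f=G_{n/2}(\cdot-x_0)\mathbf 1_{|\cdot-x_0|>1}$ does \emph{not} satisfy $|\square_k f(x_0)|\approx\langle k\rangle^{-n/2}$: removing the singularity at $x_0$ is precisely what kills the slow $\langle\xi\rangle^{-n/2}$ Fourier decay, since $\int_{|y|>1}G_{n/2}(y)e^{-2\pi i y\cdot\xi}\,dy$ is the Fourier transform of a smooth, exponentially decaying function with only a jump discontinuity on the unit sphere, and hence decays like $|\xi|^{-(n+1)/2}$ (stationary phase on the sphere), not $|\xi|^{-n/2}$. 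With this faster decay the series $\sum_k\langle k\rangle^{\beta(\infty,q)q}|\square_k f(x_0)|^q$ converges, and you get no counterexample. The paper avoids this difficulty by a contradiction-through-interpolation argument: if $L_\infty\subset W_{\infty,q_0}^{\beta(\infty,q_0)}$ held for some $q_0$ with $1/q_0>1/2$, interpolating with the known $L_p\subset W_{p,2}$ for $2\le p<\infty$ would yield $L_{\tilde p}\subset W_{\tilde p,\tilde q}^{\beta(\tilde p,\tilde q)}$ for some $\tilde p>2$, $1/\tilde q>1/2$, contradicting the strict inequality already established in Theorem~\ref{theorem, embedding between Wiener and hp}. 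You could either adopt this interpolation route, or, if you want a direct example, replace the Bessel potential by a random trigonometric polynomial $\sum_k\omega_k\langle k\rangle^{-n/2}(\log\langle k\rangle)^{-\alpha}e^{ik\cdot x}\phi$ with $1<\alpha<1/q$ (possible precisely when $q<1$): Salem--Zygmund-type estimates put this almost surely in $L_\infty$, while $\sum_k\langle k\rangle^{-n}(\log\langle k\rangle)^{-\alpha q}=\infty$ for $\alpha q<1$ forces the $W_{\infty,q}^{\beta(\infty,q)}$-norm to diverge.
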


\begin{remark}\label{rek1}We would like to compare our results with the corresponding results in \cite{Cunanan_embedding_Lp_wiener}.
Firstly, Theorem \ref{theorem, embedding between Wiener and Besov} is a complete improvement and extension of Theorem A in the following two aspects:
(i) Relax the range of $(p,q)$ from $[1,\infty]^2$ to the full range $(0,\infty]^2$;
(ii) Make it clear whether the inclusion between $W_{p,q}^s$ and $B_{p,q}$ is correct or not in the critical case $s=\alpha(p,q)$ or $\beta(p,q)$.
Thus, Theorem \ref{theorem, embedding between Wiener and Besov} is sharper than Theorem A, even in the range $1\leqslant p, q\leqslant \infty$,
and the results in Theorem 1.1 are new for $p\in (0,1)$ or $q\in (0,1)$.

Secondly, compared Theorem \ref{theorem, embedding between Wiener and hp} with Theorem B, it is obvious that the conclusions of Theorem 1.2 are new
for $p\in (0,1]$ or $q\in (0,1]$. For $1\leqslant p \leqslant \infty$, $1\leqslant q\leqslant \infty$, we make some comments as follows.
Recall that $h_p$ is equivalent to $L_p$ for $p\in (1,\infty)$,  Theorems \ref{theorem, embedding between Wiener and hp} and \ref{theorem, embedding between Wiener and L1 or L infty}
actually solve all the problems remaining open in Theorem B.
Also, the inclusion relation $W_{1,q}\subset L_1$ in $1/q\geqslant 1/2$
in Theorem B is improved to $W_{1,q}\subset h_1$ in Theorem \ref{theorem, embedding between Wiener and hp}. In addition, $L_1\subset W_{1,q}$ was proved to be not right if $q\neq \infty$,
however, we prove that a substitute inclusion relation $h_1\subset W_{1,q}$ for $q\in [1,\infty]$ is right in Theorem \ref{theorem, embedding between Wiener and hp}.
Thus, Theorems \ref{theorem, embedding between Wiener and hp} and \ref{theorem, embedding between Wiener and L1 or L infty} are the essential improvement and extension of Theorem B, even when $1\leqslant p, q\leqslant \infty$.
\end{remark}

\begin{remark}\label{rek3}It will be also interesting to compare Theorem \ref{theorem, embedding between Wiener and Besov} with
the following results from \cite{Sugimoto_Tomita, Toft_Continunity, WH_JDE_2007}:
If $0<p\leqslant \infty$, $0\leqslant q\leqslant \infty$, then
\begin{equation}
  M_{p,q}^{\alpha(p,q)}\subset B_{p,q}~\text{for}~1/p<1/q;\hspace{6mm}B_{p,q}\subset M_{p,q}^{\beta(p,q)}~\text{for}~1/p>1/q.
\end{equation}
Recalling $M_{p, \min\{p,q\}}^s \subset W_{p,q}^s\subset M_{p, \max\{p,q\}}^s$, one may ask that
whether the above inclusion relations still hold if we replace the modulation space by corresponding Wiener amalgam space.
Theorem \ref{theorem, embedding between Wiener and Besov} answers this question.
More exactly, we verify the following inclusion relations
\begin{equation}
  W_{p,q}^{\alpha(p,q)}\subset B_{p,q}~\text{for}~1/p<1/q;\hspace{6mm}B_{p,q}\subset W_{p,q}^{\beta(p,q)}~\text{for}~1/p>1/q
\end{equation}
are negative for $0<p\leqslant \infty$, $0< q\leqslant \infty$.

We now turn to compare Theorem \ref{theorem, embedding between Wiener and hp} with the following known inclusion relations obtained in \cite{Lcal hardy and modulation_studia, sobolev and modulation}:
If $0<p<\infty$, $0\leqslant q\leqslant \infty$, then
\begin{equation}\label{introduction, 1}
  M_{p,q}^{\alpha(p,q)}\subset h_p~\text{for}~1/p<1/q;\hspace{6mm}h_p\subset M_{p,q}^{\beta(p,q)}~\text{for}~1/p>1/q.
\end{equation}
In Theorem \ref{theorem, embedding between Wiener and hp}, we obtain that
\begin{equation}\label{introduction, 2}
  W_{p,q}^{\alpha(p,q)}\subset h_p~\text{for}~1/p<1/q;\hspace{6mm}h_p\subset W_{p,q}^{\beta(p,q)}~\text{for}~1/p>1/q.
\end{equation}
Since $M_{p, \min\{p,q\}}^s \subset W_{p,q}^s\subset M_{p, \max\{p,q\}}^s$, the new inclusion relation (\ref{introduction, 2})
improves the known result (\ref{introduction, 1}).
\end{remark}

Finally, we give some useful corollaries which can be deduced directly by Theorems \ref{theorem, embedding between Wiener and hp}--\ref{theorem, embedding between Wiener and L1 or L infty}
and certain mild characterizations of the inclusion relations between $W_{p,q}^s$ and $L_p$ (see Section 3).
\begin{corollary}[Weighted Hausdorff-Young inequality]\label{corollary, weighted Hausdorff-Young inequality}
Let $1\leqslant p, q\leqslant \infty$, $s\in \mathbb{R}$. Let $R>0$ be a positive number. Then
\begin{enumerate}
 \item
  $\|f\|_{L_p}\lesssim \|\widehat{f}\|_{L_q^s}$ holds for all $f$ supported  on $B(0,R)$
  if and only if
  $s\geqslant \alpha(p,q)$
  with strict inequality when
  $1/q<1/p\wedge 1/2$ $(p\neq \infty)$ or $1/q<1$ $(p=\infty)$.
  \item
  $\|\widehat{f}\|_{L_q^s}\lesssim \|f\|_{L_p}$ holds for all $f$ supported  on $B(0,R)$
  if and only if
  $s\leqslant \beta(p,q)$
  with strict inequality when
  $1/q> 1/p\vee 1/2$ $(p\neq 1)$ or $q\neq \infty$ $(p=1)$.
\end{enumerate}
\end{corollary}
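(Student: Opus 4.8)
The plan is to obtain Corollary~\ref{corollary, weighted Hausdorff-Young inequality} as a direct consequence of the sharp inclusion relations between $W_{p,q}^{s}$ and $L_{p}$ together with a localization principle for Wiener amalgam spaces, which I would record in Section~3. First I would note that the characterization of ``$W_{p,q}^{s}\subset L_{p}$'' is already available: for $1<p<\infty$ one has $h_{p}=L_{p}$, so it is Theorem~\ref{theorem, embedding between Wiener and hp}(1); for $p=1$ it is Theorem~\ref{theorem, embedding between Wiener and L1 or L infty}(1); and for $p=\infty$ it is Theorem~\ref{theorem, embedding between Wiener and L1 or L infty}(2). Symmetrically, ``$L_{p}\subset W_{p,q}^{s}$'' is governed by Theorem~\ref{theorem, embedding between Wiener and hp}(2) for $1<p<\infty$, by Theorem~\ref{theorem, embedding between Wiener and L1 or L infty}(4) for $p=\infty$, and by Theorem~\ref{theorem, embedding between Wiener and L1 or L infty}(3) for $p=1$. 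A small bookkeeping check then matches the side conditions in the corollary: at $p=1$ one has $1/p\wedge 1/2=1/2$ (so part (1) asks strictness exactly when $1/q<1/2$) and $1/p\vee 1/2=1$, so the borderline case in part (2) is the genuinely endpoint-sensitive ``$q\neq\infty$'' rather than the vacuous ``$1/q>1$''; at $p=\infty$ the conditions ``$1/q<1$'' in part (1) and ``$1/q>1/2=1/p\vee1/2$'' in part (2) are exactly those in Theorem~\ref{theorem, embedding between Wiener and L1 or L infty}(2),(4).

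Next I would establish the localization principle, namely that for each fixed $R>0$,
\begin{equation}\label{eqn, localization principle}
  W_{p,q}^{s}\subset L_{p}\iff \|f\|_{L_{p}}\lesssim\|\widehat{f}\|_{L_{q}^{s}}\ \text{ for all }\ f\ \text{ with }\ \operatorname{supp}f\subset B(0,R),
\end{equation}
together with the version obtained by reversing the inclusion and interchanging the two norms. The key point is that on functions supported in $B(0,R)$ the amalgam norm degenerates: if $\rho\in C_{c}^{\infty}$ generates the partition of unity $\sum_{k\in\mathbb{Z}^{n}}\rho(\,\cdot-k)\equiv1$, then $\|f\|_{W_{p,q}^{s}}\approx_{R}\|\widehat{f}\|_{L_{q}^{s}}$. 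The upper bound holds because $\mathscr{F}\big(f\cdot\rho(\,\cdot-x)\big)=\widehat{f}*\mathscr{F}\big(\rho(\,\cdot-x)\big)$ is a convolution of $\widehat{f}$ with a fixed Schwartz function, the weight $\langle\,\cdot\,\rangle^{s}$ is moderate, and the set of relevant $x$ has measure $O_{R}(1)$; the lower bound holds by choosing finitely many translates of $\rho$ summing to $1$ on $B(0,R)$. Given this, the $\Rightarrow$ direction of \eqref{eqn, localization principle} is immediate, since the set-theoretic inclusion $W_{p,q}^{s}\subset L_{p}$ is automatically continuous (closed graph theorem) and hence $\|f\|_{L_{p}}\lesssim\|f\|_{W_{p,q}^{s}}\approx_{R}\|\widehat{f}\|_{L_{q}^{s}}$ on compactly supported $f$.

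For the $\Leftarrow$ direction I would decompose a general $f\in W_{p,q}^{s}$ as $f=\sum_{k\in\mathbb{Z}^{n}}g_{k}$ with $g_{k}=\rho(\,\cdot-k)f$, each $g_{k}$ supported in a fixed dilate of $B(0,R)$; applying the hypothesis to each $g_{k}$ (legitimate because translating $f$ multiplies $\widehat{f}$ only by a unimodular factor and hence preserves $\|\widehat{f}\|_{L_{q}^{s}}$), using that at every point only $O(1)$ of the $g_{k}$ are nonzero to get $\|f\|_{L_{p}}^{p}\lesssim\sum_{k}\|g_{k}\|_{L_{p}}^{p}\lesssim\sum_{k}\|\widehat{g_{k}}\|_{L_{q}^{s}}^{p}$, and finally invoking the standard equivalence of the integral and lattice-sum forms of the amalgam norm to identify $\sum_{k}\|\widehat{g_{k}}\|_{L_{q}^{s}}^{p}\approx\|f\|_{W_{p,q}^{s}}^{p}$. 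The reversed equivalence is proved identically, decomposing an $L^{p}$ function instead and summing the local $\mathscr{F}^{-1}L_{q}^{s}$ norms of its pieces. Combining \eqref{eqn, localization principle} and its twin with the inclusion characterizations recalled above yields the corollary.

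I expect the main obstacle to be bookkeeping rather than conceptual: the only delicate points are the clean two-sided verification of $\|f\|_{W_{p,q}^{s}}\approx_{R}\|\widehat{f}\|_{L_{q}^{s}}$ for compactly supported $f$ (controlling the harmless frequency-side spreading produced by multiplying $f$ by a smooth cutoff, together with the moderateness of the weight) and the continuous-versus-discrete comparison of amalgam norms with bounded overlap; both are standard in the Banach range $1\leqslant p,q\leqslant\infty$ to which the corollary is confined. The real substance of the result is carried entirely by Theorems~\ref{theorem, embedding between Wiener and hp} and~\ref{theorem, embedding between Wiener and L1 or L infty}.
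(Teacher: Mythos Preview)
Your proposal is correct and follows essentially the same approach as the paper: the paper deduces the corollary directly from Theorems~\ref{theorem, embedding between Wiener and hp} and~\ref{theorem, embedding between Wiener and L1 or L infty} together with Corollary~\ref{corollary, mild characterization}, and your ``localization principle''~\eqref{eqn, localization principle} (plus the equivalence $\|f\|_{W_{p,q}^{s}}\approx_{R}\|\widehat{f}\|_{L_{q}^{s}}$ for compactly supported $f$) is exactly the content of Corollary~\ref{corollary, mild characterization}, which in the paper is packaged from Lemma~\ref{lemma, equivalent norm, compact support}, Proposition~\ref{lemma, Localization principle for Wiener amalgam space}, and Proposition~\ref{proposition, mild characterization}. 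The only difference is organizational: you re-derive these ingredients inline, whereas the paper has already isolated them in Section~3.
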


\begin{corollary}[Inequality for Fourier series]\label{corollary, Inequality for Fourier series}
Let $1\leqslant p, q\leqslant \infty$, $s\in \mathbb{R}$. Then
\begin{enumerate}
 \item
  $\|\sum_{k\in \mathbb{Z}^n}a_ke^{2\pi ikx}\|_{L_p(\mathbb{T}^n)} \lesssim \|\{a_k\}_{k\in \mathbb{Z}^n}\|_{l_q^{n(1/2-1/q),0}}$
  holds for all truncated sequences $\vec{a}=\{a_k\}_{k\in \mathbb{Z}^n}$
  if and only if
  $s\geqslant \alpha(p,q)$
  with strict inequality when
  $1/q<1/p\wedge 1/2$ $(p\neq \infty)$ or $1/q<1$ $(p=\infty)$.
  \item
  $\|\{a_k\}_{k\in \mathbb{Z}^n}\|_{l_q^{n(1/2-1/q),0}} \lesssim \|\sum_{k\in \mathbb{Z}^n}a_ke^{2\pi ikx}\|_{L_p(\mathbb{T}^n)}$
  holds for all truncated sequences $\vec{a}=\{a_k\}_{k\in \mathbb{Z}^n}$
  if and only if
  $s\leqslant \beta(p,q)$
  with strict inequality when
  $1/q> 1/p\vee 1/2$ $(p\neq 1)$ or $q\neq \infty$ $(p=1)$.
\end{enumerate}
\end{corollary}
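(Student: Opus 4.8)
The plan is to deduce Corollary~\ref{corollary, Inequality for Fourier series} from the weighted Hausdorff--Young inequality of Corollary~\ref{corollary, weighted Hausdorff-Young inequality} --- or, equivalently, from the characterization of the inclusions $W_{p,q}^{s}\subset L_{p}$ and $L_{p}\subset W_{p,q}^{s}$ obtained in Section~3 together with Theorems~\ref{theorem, embedding between Wiener and hp} and~\ref{theorem, embedding between Wiener and L1 or L infty} --- by a transference between trigonometric polynomials on $\mathbb{T}^{n}$ and compactly supported functions on $\mathbb{R}^{n}$. We first note that the strictness conditions in Corollary~\ref{corollary, Inequality for Fourier series}(1) (resp.\ (2)) coincide with the ones attached to $W_{p,q}^{s}\subset L_{p}$ (resp.\ $L_{p}\subset W_{p,q}^{s}$) in Theorems~\ref{theorem, embedding between Wiener and hp}--\ref{theorem, embedding between Wiener and L1 or L infty}: for $1<p<\infty$ via Theorem~\ref{theorem, embedding between Wiener and hp} and $h_{p}=L_{p}$, and for $p\in\{1,\infty\}$ via Theorem~\ref{theorem, embedding between Wiener and L1 or L infty}. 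So the real content is the equivalence between the displayed Fourier-series inequalities and these inclusions.

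For the sufficiency in~(1), fix once and for all a function $\psi\in C_{c}^{\infty}(\mathbb{R}^{n})$ with $\psi\equiv1$ on $[0,1]^{n}$ and $\mathrm{supp}\,\psi\subset B(0,R)$, and, given a truncated sequence $\vec a=\{a_{k}\}$, set $P(x)=\sum_{k}a_{k}e^{2\pi ikx}$ and $f=\psi P$. Then $f$ is supported in $B(0,R)$, $\widehat f=\sum_{k}a_{k}\widehat\psi(\cdot-k)$, and $\|P\|_{L_{p}(\mathbb{T}^{n})}=\|f\|_{L_{p}([0,1]^{n})}\le\|f\|_{L_{p}(\mathbb{R}^{n})}$. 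Since $\widehat\psi$ is rapidly decreasing and $\langle\xi\rangle^{s}\lesssim\langle k\rangle^{s}\langle\xi-k\rangle^{|s|}$, a routine semi-discrete Young-type estimate gives $\|\widehat f\|_{L_{q}^{s}}\lesssim\|\vec a\|$, the sequence norm appearing on the right of~(1). If $s\ge\alpha(p,q)$ with the stated strictness, Corollary~\ref{corollary, weighted Hausdorff-Young inequality}(1) gives $\|f\|_{L_{p}}\lesssim\|\widehat f\|_{L_{q}^{s}}$, and chaining the three estimates proves~(1).

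For the necessity in~(1), assume the inequality in~(1) holds. Given smooth $f$ with $\mathrm{supp}\,f\subset B(0,R)$ and $L=2R+1$, periodizing $f$ over $(\mathbb{R}/L\mathbb{Z})^{n}$ and rescaling to period one produces a function on $\mathbb{T}^{n}$ whose Fourier coefficients are $a_{k}=L^{-n}\widehat f(k/L)$ (Poisson summation), with $\|f\|_{L_{p}(\mathbb{R}^{n})}=L^{n/p}\|\sum_{k}a_{k}e^{2\pi ikx}\|_{L_{p}(\mathbb{T}^{n})}$. Truncating the series, which converges uniformly (as $\widehat f$ is Schwartz) so that its sequence norm also converges, and applying~(1) gives $\|f\|_{L_{p}}\lesssim L^{n/p}\|\vec a\|$; and the Plancherel--P\'olya sampling inequality for the Paley--Wiener class of exponential type $\lesssim R$ (with $L$ fixed, hence with constants uniform in $f$) gives $\|\vec a\|\lesssim\|\widehat f\|_{L_{q}^{s}}$. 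Hence $\|f\|_{L_{p}}\lesssim\|\widehat f\|_{L_{q}^{s}}$ for all such $f$, and by density for all $f\in L_{p}$ supported in $B(0,R)$, so Corollary~\ref{corollary, weighted Hausdorff-Young inequality}(1) forces $s\ge\alpha(p,q)$ with the required strictness. Part~(2) follows in the same way with the two sides interchanged, using Corollary~\ref{corollary, weighted Hausdorff-Young inequality}(2); for its sufficiency one uses the lower synthesis bound $\|\vec a\|\lesssim\|\sum_{k}a_{k}\widehat\psi(\cdot-k)\|_{L_{q}^{s}}$ in place of the easy upper bound, and for its necessity the oversampled direction of the Plancherel--P\'olya comparison.

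The hard part will be this two-sided comparison: between the weighted sequence norm $\|\vec a\|$ and $\|\sum_{k}a_{k}\widehat\psi(\cdot-k)\|_{L_{q}^{s}}$ (its synthesis form), and between $\|\vec a\|$ and $\|\widehat f\|_{L_{q}^{s}}$ for Paley--Wiener $\widehat f$ with $a_{k}$ its normalized samples (its sampling form). The upper estimates are the routine ones above, but the lower ones are delicate, since the $\mathbb{Z}^{n}$-translates of a single $\widehat\psi$ need not form a Riesz system in $L_{q}$ for an arbitrary $\psi$; thus $\psi$ has to be chosen with care --- for instance so that $\widehat\psi\ge0$ and $\widehat\psi$ is bounded below near the origin, realized as the autocorrelation of a suitably dilated mollified indicator --- and the endpoints $q=\infty$ and $p=\infty$, where $L_{\infty}(\mathbb{T}^{n})$, $\ell_{\infty}$ and the truncation step require limiting arguments, must be treated separately. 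This comparison, in the guise of the discretization underlying the mild characterizations of the inclusions between $W_{p,q}^{s}$ and $L_{p}$ in Section~3, is precisely what makes the corollary follow directly.
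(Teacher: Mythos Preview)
Your proposal is correct and takes essentially the same route as the paper: the paper deduces Corollary~\ref{corollary, Inequality for Fourier series} directly from Proposition~\ref{proposition, fourier series} (which is exactly the transference you describe between the Fourier-series inequalities and the inclusions $W_{p,q}^{s}\subset L_{p}$, $L_{p}\subset W_{p,q}^{s}$) together with Theorems~\ref{theorem, embedding between Wiener and hp} and~\ref{theorem, embedding between Wiener and L1 or L infty}. Your detour through Corollary~\ref{corollary, weighted Hausdorff-Young inequality} is harmless but unnecessary, since Proposition~\ref{proposition, fourier series} already gives the equivalence in one step; and the ``hard'' two-sided sampling/synthesis comparison you single out is handled in the paper's proof of Proposition~\ref{proposition, fourier series} via the auxiliary function $\phi=\rho\cdot(\langle\,\cdot\,\rangle^{-N})^{\vee}$ with $\widehat{\phi}\gtrsim\langle\,\cdot\,\rangle^{-N}$, which plays the same role as your carefully chosen $\psi$.
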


This paper is organized as follows. In Section 2, we will give some basic notations,
and recall some definitions and basic properties for the function spaces used throughout this paper.
Some preliminary lemmas will also be given in this section. Section 3 is devoted
to some mild characterizations of inclusion relations between Wiener amalgam and Triebel-Lizorkin spaces.
Our main theorems will be proved in Sections 4-6.

\section{PRELIMINARIES}

We recall some notations. Let $C$ be a positive constant that may depend on $n,p_i,q_i,s_i,\alpha$ for $i=1, 2.$
The notation $X\lesssim  Y$ denotes the statement that $X\leqslant CY$,
the notation $X\sim Y$ means the statement $X\lesssim Y \lesssim X$,
and the notation $X\simeq Y$ denotes the statement $X=CY$.
For a multi-index $k=(k_1,k_2,...k_n)\in \mathbb{Z}^{n}$,
we denote $|k|_{\infty}: =\mathop{\sup}_{i=1,2...n}|k_i|$, and $\langle k\rangle: =(1+|k|^{2})^{1/2}.$

In this paper, for the sake of simplicity, we use the notation $"\mathscr{L}"$ to denote some large positive number which may be changed
corresponding to the exact environment.

Let $\mathscr {S}:= \mathscr {S}(\mathbb{R}^{n})$ be the Schwartz space
and $\mathscr {S}':=\mathscr {S}'(\mathbb{R}^{n})$ be the space of tempered distributions.
We define the Fourier transform $\mathscr {F}f$ and the inverse Fourier transform $\mathscr {F}^{-1}f$  of $f\in \mathscr {S}(\mathbb{R}^{n})$ by
$$
\mathscr {F}f(\xi)=\hat{f}(\xi)=\int_{\mathbb{R}^{n}}f(x)e^{-2\pi ix\cdot \xi}dx
~~
,
~~
\mathscr {F}^{-1}f(x)=\hat{f}(-x)=\int_{\mathbb{R}^{n}}f(\xi)e^{2\pi ix\cdot \xi}d\xi.
$$

We recall some definitions of the function spaces treated in this paper.
\begin{definition}
Let $0<p \leqslant \infty$, $s\in \mathbb{R}$. The weighted Lebesgue space $L_{x, p}^s$ consists of all measurable functions $f$ such that
\begin{numcases}{\|f\|_{L_{x, p}^s}=}
     \left(\int_{\mathbb{R}^n}|f(x)|^p \langle x\rangle^{ps} dx\right)^{{1}/{p}}, &$p<\infty$   \\
     ess\sup_{x\in \mathbb{R}^n}|f(x)\langle x\rangle^s|,  &$p=\infty$
\end{numcases}
is finite.
If $f$ is defined on $\mathbb{Z}^n$, we denote
\begin{numcases}{\|f\|_{l_{k,p}^{s,0}}=}
\left(\sum_{k\in \mathbb{Z}^n}|f(k)|^p \langle k\rangle^{ps}\right)^{{1}/{p}}, &$p<\infty$
\\
\sup_{k\in \mathbb{Z}^n}|f(k)\langle k\rangle^s|,\hspace{15mm} &$p=\infty$
\end{numcases}
and $l_{k,p}^s$ as the (quasi) Banach space of functions $f: \mathbb{Z}^n\rightarrow \mathbb{C}$ whose $l_{k,p}^s$ norm is finite.
If $f$ is defined on $\mathbb{N}$, we denote
\begin{numcases}{\|f\|_{l_{j,p}^{s,1}}=}
\left(\sum_{j\in \mathbb{N}}2^{jsp}|f(j)|^p\right)^{{1}/{p}}, &$p<\infty$
\\
\sup_{j\in \mathbb{N}}|2^{js}f(j)|,\hspace{15mm} &$p=\infty$
\end{numcases}
and $l_{j,p}^{s,1}$ as the (quasi) Banach space of functions $f: \mathbb{N}\rightarrow \mathbb{C}$ whose $l_{j,p}^{s,1}$ norm is finite.
We write $L_p^s$, $l_p^{s,0}$, $l_p^{s,1}$ for short, respectively, if there is no confusion.
\end{definition}

The translation operator is defined as $T_{x_0}f(x)=f(x-x_0)$ and
the modulation operator is defined as $M_{\xi}f(x)=e^{2\pi i\xi \cdot x}f(x)$, for $x$, $x_0$, $\xi\in\mathbb{R}^n$.
Fixed a nonzero function $\phi\in \mathscr{S}$, the short-time Fourier
transform of $f\in \mathscr{S}'$ with respect to the window $\phi$ is given by
\begin{equation}
V_{\phi}f(x,\xi)=\langle f,M_{\xi}T_x\phi\rangle=\int_{\mathbb{R}^n}f(y)\overline{\phi(y-x)}e^{-2\pi iy\cdot \xi}dy.
\end{equation}

Now, we give the definitions of modulation and Wiener amalgam spaces.
\begin{definition}\label{Definition, modulation space, continuous form}
Let $0<p, q\leqslant \infty$, $s\in \mathbb{R}$.
Given a window function $\phi\in \mathscr{S}\backslash\{0\}$, the modulation space $M_{p,q}^s$ consists
of all $f\in \mathscr{S}'(\mathbb{R}^n)$ such that the norm
\begin{equation}
\begin{split}
\|f\|_{M_{p,q}^s}&=\big\|\|V_{\phi}f(x,\xi)\|_{L_{x,p}}\big\|_{L_{\xi,q}^s}
\\&
=\left(\int_{\mathbb{R}^n}\left(\int_{\mathbb{R}^n}|V_{\phi}f(x,\xi)|^{p} dx\right)^{{q}/{p}}\langle\xi\rangle^{sq}d\xi\right)^{{1}/{q}}
\end{split}
\end{equation}
is finite, with the usual modification when $p=\infty$ or $q=\infty$.
In addition, we write $M_{p,q}:=M_{p,q}^{0}$.
\end{definition}

The above definition of \ $M_{p,q}^s$ \  is independent of the choice of window function $\phi$.
One can see this fact in \cite{Grochnig} \ for the case $(p,q)\in\lbrack 1,\infty ]^{2}$,
and in \cite{Galperin_Samarah} for the case $(p,q)\in (0,\infty ]^{2}\backslash\lbrack 1,\infty]^{2}$.
More properties of modulation spaces can be founded in \cite{Feichtinger, Wang_book}.
One can also see \cite{Feichtinger_Survey} for a survey of modulation spaces.

\begin{definition}\label{Definition, Wiener amalgam space, continuous form}
Let $0<p, q\leqslant \infty$, $s\in \mathbb{R}$.
Given a window function $\phi\in \mathscr{S}\backslash\{0\}$, the Wiener amalgam space $W_{p,q}^s$ consists
of all $f\in \mathscr{S}'(\mathbb{R}^n)$ such that the norm
\begin{equation}
\begin{split}
\|f\|_{W_{p,q}^{s}}&=\big\|\|V_{\phi}f(x,\xi)\|_{L_{\xi, q}^s}\big\|_{L_{x,p}}
\\&
=\left(\int_{\mathbb{R}^n}\left(\int_{\mathbb{R}^n}|V_{\phi}f(x,\xi)|^{q}\langle \xi\rangle^{sq}d\xi\right)^{{p}/{q}}dx\right)^{{1}/{p}}
\end{split}
\end{equation}
is finite, with the usual modifications when $p=\infty$ or $q=\infty$.
\end{definition}

We recall an embedding lemma as follows. One can see more basic properties about Wiener amalgam spaces in \cite{Cordero_Nicola_Sharpness, Guo_Characterization, C.Heil}.
\begin{lemma}\label{lemma, embedding for Wiener amalgam space, change of variable}
Let $0<p_i, q_i\leqslant \infty$, $s_i\in \mathbb{R}$ for $i=1, 2$. Then
\begin{equation}
  W_{p_1,q_1}^{s_1}\subset W_{p_2,q_2}^{s_2}
\end{equation}
holds if and only if
\begin{equation}
\begin{cases}
s_2\leqslant s_1  \\
\frac{1}{p_2}\leqslant \frac{1}{p_1}\\
\frac{1}{q_2}+\frac{s_2}{n}< \frac{1}{q_1}+\frac{s_1}{n}
\end{cases}
\text{or} \hspace{10mm}
\begin{cases}
s_2=s_1\\
\frac{1}{p_2}\leqslant \frac{1}{p_1}\\
q_2=q_1.
\end{cases}
\end{equation}
\end{lemma}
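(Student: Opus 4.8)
The plan is to replace the continuous (quasi-)norm of $W_{p,q}^{s}$ by an equivalent discrete model and then to recognise the three conditions of the lemma as exactly those governing two elementary sequence-space embeddings. Fix a smooth uniform partition of unity $\{\sigma_k\}_{k\in\mathbb{Z}^n}$ on the frequency side with $\mathrm{supp}\,\sigma_k\subset k+[-1,1]^n$ and $\sum_{k}\sigma_k\equiv1$, set $\Box_k:=\mathscr{F}^{-1}\sigma_k\mathscr{F}$, and let $Q_m:=m+[0,1)^n$ for $m\in\mathbb{Z}^n$. Since each $\Box_k f$ is band-limited with Fourier support of bounded size, it varies slowly at unit scale, so the local integral over $Q_m$ of $\big(\sum_k\langle k\rangle^{sq}|\Box_k f|^{q}\big)^{p/q}$ is comparable to its supremum over a slightly enlarged cube; combining this with the standard window-independence and maximal-function arguments for the short-time Fourier transform one obtains
\begin{equation*}
  \|f\|_{W_{p,q}^{s}}\sim\Big\|\,\big\|\langle k\rangle^{s}(\Box_k f)^{\ast}_{m}\big\|_{l_{k,q}}\,\Big\|_{l_{m,p}},\qquad (\Box_k f)^{\ast}_{m}:=\sup_{y\in 2Q_m}|\Box_k f(y)|,
\end{equation*}
with constants independent of $f\in\mathscr{S}'$; cf. \cite{Triebel_modulation space, Guo_Characterization}. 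All remaining steps are carried out in this model.

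For the sufficiency I would verify two atomic embeddings and compose them. First, if $1/p_2\le1/p_1$ then $\|(b_m)_m\|_{l_{m,p_2}}\le\|(b_m)_m\|_{l_{m,p_1}}$ by nesting of $l^{p}$-spaces, hence $W_{p_1,q}^{s}\subset W_{p_2,q}^{s}$ for all $q,s$. Secondly, with $p$ fixed it suffices to prove, uniformly in nonnegative sequences $(a_k)$, the weighted inequality $\|\langle k\rangle^{s_2}a_k\|_{l_{k,q_2}}\lesssim\|\langle k\rangle^{s_1}a_k\|_{l_{k,q_1}}$: if $q_1\le q_2$ this follows at once from $l^{q_1}\hookrightarrow l^{q_2}$ and $s_2\le s_1$; if $q_1>q_2$ one writes $1/q_2=1/q_1+1/r$ with $r>0$ and applies Hölder to bound the left side by $\|\langle k\rangle^{s_2-s_1}\|_{l_{k,r}}\,\|\langle k\rangle^{s_1}a_k\|_{l_{k,q_1}}$, the weight being summable, i.e. $\langle k\rangle^{s_2-s_1}\in l^{r}(\mathbb{Z}^n)$, precisely when $(s_1-s_2)r>n$, equivalently $1/q_2+s_2/n<1/q_1+s_1/n$. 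Applying this in the $l_{k,q}$-slot for each $m$ and then taking the $l_{m,p}$-norm gives $W_{p,q_1}^{s_1}\subset W_{p,q_2}^{s_2}$ under the first alternative, the second alternative ($s_1=s_2$, $q_1=q_2$) being the $p$-monotonicity above. Chaining $W_{p_1,q_1}^{s_1}\subset W_{p_1,q_2}^{s_2}\subset W_{p_2,q_2}^{s_2}$ completes the sufficiency.

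For the necessity I would first note that an inclusion of these quasi-Banach spaces is automatically a bounded embedding (closed graph theorem; both embed continuously into $\mathscr{S}'$), and then test the resulting inequality $\|f\|_{W_{p_2,q_2}^{s_2}}\lesssim\|f\|_{W_{p_1,q_1}^{s_1}}$ on two families. Taking $f=\sum_{m\in F}T_{Rm}g_0$ with $g_0\in\mathscr{S}$ having compactly supported Fourier transform, $F\subset\mathbb{Z}^n$ finite and $R$ large, the spatial pieces essentially decouple and $\|f\|_{W_{p_i,q_i}^{s_i}}\sim(\#F)^{1/p_i}$; letting $\#F\to\infty$ forces $1/p_2\le1/p_1$. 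Taking instead $f$ with $\widehat f(\xi)=\sum_{k\in F}a_k\widehat{h_0}(\xi-k)$ for a bump $h_0$ with $\mathrm{supp}\,\widehat{h_0}$ small, the frequency pieces decouple in the inner slot and $\|f\|_{W_{p_i,q_i}^{s_i}}\sim\|h_0\|_{L_{p_i}}\,\|(a_k)\|_{l_{k,q_i}^{s_i,0}}$, so the inclusion forces $l_{k,q_1}^{s_1,0}\subset l_{k,q_2}^{s_2,0}$ for all finitely supported $(a_k)$; by the classical embedding theorem for weighted sequence spaces this is equivalent to precisely the asserted dichotomy on $(q_i,s_i)$ — the choice $F=\{k\}$ already gives $s_2\le s_1$, while, once $q_1=q_2$ is excluded, the endpoint $1/q_2+s_2/n=1/q_1+s_1/n$ is ruled out by the logarithmically refined sequence $a_k=\langle k\rangle^{-s_1-n/q_1}(\log\langle k\rangle)^{-\gamma}$ with $1/q_1<\gamma\le1/q_2$, which lies in $l_{k,q_1}^{s_1,0}$ but not in $l_{k,q_2}^{s_2,0}$.

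The step I expect to be the genuine obstacle is the discretisation itself, for the full range $0<p,q\le\infty$ and especially for $q<1$: there Young's inequality is unavailable, and controlling the change of window, the maximal functions $(\Box_k f)^{\ast}_m$, and the passage between the continuous and discrete $\xi$-integrations requires Nikolskii-/Plancherel--Polya-type inequalities for band-limited functions instead. Granting the discrete model, the sequence-space bookkeeping above is routine; the only other mildly technical point is making the ``essential decoupling'' in the two test families precise, i.e. estimating nearest-neighbour overlaps and Schwartz tails, which is standard.
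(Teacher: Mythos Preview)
The paper does not prove this lemma: it is stated as a known embedding result, with the reader referred to \cite{Cordero_Nicola_Sharpness, Guo_Characterization, C.Heil} for background. So there is no in-paper proof to compare against; I can only assess your argument on its own merits.

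Your strategy is sound and is essentially the standard one. Reducing the Wiener amalgam norm to the doubly discrete model
\[
\|f\|_{W_{p,q}^{s}}\sim\Big\|\big\|\langle k\rangle^{s}(\Box_k f)^{\ast}_{m}\big\|_{l_{k,q}}\Big\|_{l_{m,p}}
\]
is exactly the right move, because only at the discrete level do you get the sequence-space nesting $l^{q_1}\hookrightarrow l^{q_2}$ that makes the sufficiency go through; on the continuous side $L^{q_1}(\mathbb{R}^n)\not\subset L^{q_2}(\mathbb{R}^n)$, so one cannot argue pointwise in $x$ with the raw STFT definition. Once the discrete model is in place, your sufficiency via the two atomic embeddings and your necessity via the two test families are both correct, and the appeal to the weighted sequence-space dichotomy is precisely the paper's own Lemma~\ref{lemma, Sharpness of embedding, for uniform decomposition}.

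Two small corrections that do not affect the conclusion. In the second test family $f=\sum_{k\in F}a_k M_k h_0$ the constant that factors out is $\|h_0\|_{W_{p_i,q_i}}$ (or, if you prefer, $\big\|\|V_\phi h_0(x,\cdot)\|_{L_{q_i}}\big\|_{L_{x,p_i}}$), not $\|h_0\|_{L_{p_i}}$; it is still a fixed positive finite number, so the implication $l_{q_1}^{s_1,0}\subset l_{q_2}^{s_2,0}$ survives. In the first test family it is cleaner to take $g_0$ with compact \emph{spatial} support and choose a compactly supported window $\phi$, so that the pieces $V_\phi(T_{Rm}g_0)(x,\xi)$ are genuinely disjoint in $x$ for large $R$ and no tail estimate is needed; with compact \emph{Fourier} support you must instead argue via rapid decay, which is fine but is the ``essential decoupling'' you flag at the end.

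You are right that the discretisation for the full range $0<p,q\le\infty$ is the only substantive step. For $p,q\ge1$ this is the classical coorbit/Gabor-frame theory; for $p$ or $q<1$ one needs the Plancherel--Polya/Nikol'skii machinery for band-limited functions in place of Young's inequality, as you say. That is precisely the content of the references the paper cites here (in particular \cite{Guo_Characterization}), so invoking it is legitimate.
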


The following lemma provides some inclusion relations between modulation and Wiener amalgam spaces.
One can verify it by using Minkowski's inequality.

\begin{lemma}\label{lemma, embedding between modulation and wiener}
Let $0<p, q\leqslant \infty$ and $s\in \mathbb{R}$.
We have
\begin{equation}
  M_{p, \min\{p,q\}}^s \subset W_{p,q}^s\subset M_{p, \max\{p,q\}}^s.
\end{equation}
\end{lemma}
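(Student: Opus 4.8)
The plan is to notice that $M_{p,q}^{s}$ and $W_{p,q}^{s}$ are built from the \emph{same} scalar field and differ only in the order in which a mixed Lebesgue norm is taken, and then to combine the mixed-norm Minkowski inequality with the monotonicity in the $q$-index furnished by Lemma~\ref{lemma, embedding for Wiener amalgam space, change of variable}. Put $F(x,\xi)=\langle\xi\rangle^{s}V_{\phi}f(x,\xi)$; since the weight depends on $\xi$ only it may be absorbed into $F$, so that
\[
\|f\|_{M_{p,q}^{s}}=\big\|\,\|F(x,\xi)\|_{L_{x,p}}\,\big\|_{L_{\xi,q}},
\qquad
\|f\|_{W_{p,q}^{s}}=\big\|\,\|F(x,\xi)\|_{L_{\xi,q}}\,\big\|_{L_{x,p}} .
\]

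First I would record the elementary mixed-norm fact: for $0<r'\leqslant r\leqslant\infty$ and any measurable $F\geqslant 0$,
\[
\big\|\,\|F(x,\xi)\|_{L_{\xi,r'}}\,\big\|_{L_{x,r}}\;\leqslant\;\big\|\,\|F(x,\xi)\|_{L_{x,r}}\,\big\|_{L_{\xi,r'}} ,
\]
i.e.\ moving the \emph{larger} exponent to the outside can only decrease the norm. For $r'\geqslant 1$ this is the classical Minkowski integral inequality; for $r'<1$ one raises both sides to the power $r'$ and applies the classical inequality with the exponent $r/r'\geqslant 1$ (the cases $p=\infty$ or $q=\infty$ reduce to trivial $\sup$-estimates). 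Applied to the pair $(x,\xi)$ this gives $W_{p,r}^{s}\subset M_{p,r}^{s}$ when $r\geqslant p$, and $M_{p,r}^{s}\subset W_{p,r}^{s}$ when $r\leqslant p$, with equality of the two norms when $r=p$ (Fubini).

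Next I would invoke the monotonicity $W_{p,q_{1}}^{s}\subset W_{p,q_{2}}^{s}$ whenever $q_{1}\leqslant q_{2}$, which is immediate from Lemma~\ref{lemma, embedding for Wiener amalgam space, change of variable} (take $p_{1}=p_{2}=p$, $s_{1}=s_{2}=s$), or alternatively from the frequency-uniform-decomposition description of $W_{p,q}^{s}$ together with $\ell^{q_{1}}\subset\ell^{q_{2}}$. Chaining the two ingredients, for the left inclusion I would write
\[
M_{p,\min\{p,q\}}^{s}\;\subset\;W_{p,\min\{p,q\}}^{s}\;\subset\;W_{p,q}^{s},
\]
the first step by Minkowski (since $\min\{p,q\}\leqslant p$) and the second by monotonicity (since $\min\{p,q\}\leqslant q$); and for the right inclusion
\[
W_{p,q}^{s}\;\subset\;W_{p,\max\{p,q\}}^{s}\;\subset\;M_{p,\max\{p,q\}}^{s},
\]
the first step by monotonicity ($q\leqslant\max\{p,q\}$) and the second by Minkowski ($\max\{p,q\}\geqslant p$). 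This settles both inclusions at once, with no case distinction.

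I do not anticipate a real obstacle. The only mildly delicate point is that the triangle-inequality form of Minkowski breaks down for exponents below $1$, which is circumvented by the "raise to the power $r'$" device above; everything else is routine bookkeeping with mixed norms and the already-established embedding in Lemma~\ref{lemma, embedding for Wiener amalgam space, change of variable}.
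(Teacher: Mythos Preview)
Your argument is correct and is exactly the approach the paper has in mind: the paper states only that the lemma ``can be verified by using Minkowski's inequality,'' and your proof spells this out, with the extra (and necessary) observation that one also needs the monotonicity $W_{p,q_1}^s\subset W_{p,q_2}^s$ for $q_1\leqslant q_2$, which you correctly source from Lemma~\ref{lemma, embedding for Wiener amalgam space, change of variable} or from the discrete description. There is nothing to add.
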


Next, we recall some function spaces associated with the dyadic decomposition of $\mathbb{R}^{n}$.
Let $\varphi(\xi)$ be a smooth bump function supported in the ball $\{\xi: |\xi|<3/2\}$ and be equal to 1 on the ball $\{\xi: |\xi|\leqslant 4/3\}$.
Denote
\begin{equation}
\psi(\xi)=\varphi(\xi)-\varphi(2\xi),
\end{equation}
and a function sequence
\begin{equation}\label{introduction, 3}
\begin{cases}
\psi_j(\xi)=\psi(2^{-j}\xi),~j\in \mathbb{Z}^{+},
\\
\psi_0(\xi)=1-\sum_{j\in \mathbb{Z}^+}\psi_j(\xi)=\varphi(\xi).
\end{cases}
\end{equation}
For $j\in \mathbb{N}=\mathbb{Z}^{+} \cup \{0\}$, we define the Littlewood-Paley operators
\begin{equation}
\Delta_j=\mathscr{F}^{-1}\psi_j\mathscr{F}.
\end{equation}
Let $0< p,q\leqslant\infty$ and $s\in \mathbb{R}$. For $f\in\mathscr {S}'$, set
\begin{equation}
\|f\|_{B_{p,q}^s}=\left(\sum_{j=0}^{\infty}2^{jsq}\|\Delta_jf\|_{L_p}^q \right)^{1/q}.
\end{equation}
The (inhomogeneous) Besov space is the space of all tempered distributions $f$ for which the quantity $\|f\|_{B_{p,q}^s}$ is finite.
We write $B_{p,q}:=B_{p,q}^0$ for short.

Let $0<p<\infty$, $0< q\leqslant\infty$ and $s\in \mathbb{R}$. For $f\in\mathscr {S}'$, set
\begin{equation}
\|f\|_{F_{p,q}^s}=\left\|\left(\sum_{j=0}^{\infty}2^{jsq}|\Delta_jf|^q \right)^{1/q}\right\|_{L_p}.
\end{equation}
The (inhomogeneous) Triebel-Lizorkin space is the space of all tempered distributions $f$ for which the quantity $\|f\|_{F_{p,q}^s}$ is finite.
We write $F_{p,q}:=F_{p,q}^0$ for short.
%\begin{remark}
%The definitions of Besov and Triebel-Lizorkin spaces are independent of the choice of bump functions $\varphi$.
%So one can choose an appropriate $\varphi$ as one needs.
%Specially, one can verify that the function sequence  $\{\psi_j(\xi)\}_{j\in \mathbb{N}}$ in (\ref{introduction, 3}) satisfies
%\begin{equation}
%\begin{cases}
%\textbf{supp}\psi \subset \{\xi\in \mathbb{R}^n: 2/3\leqslant |\xi|\leqslant 3/2\},
%\\
%\psi_0(\xi)=1~\text{and}~\psi_0(\xi)\psi_l(\xi)=0,~\xi\in B(0,\delta), l\neq 0,
%\\
%\psi_j(\xi)=1~\text{and}~\psi_j(\xi)\psi_l(\xi)=0,~2^j-2^j\delta \leqslant |\xi| \leqslant 2^j+2^j\delta,~l\neq j,
%\end{cases}
%\end{equation}
%for $l,j\in \mathbb{Z}^+$, where $\delta=1/4$.
%\end{remark}

Now, we turn to introduce the local Hardy space of Goldberg \cite{Goldberg}.
Let $\psi\in\mathscr{S}$ satisfy $\int_{\mathbb{R}^n}\psi(x)dx\neq0$.
Define $\psi_t=t^{-n}\psi(x/t)$.
The local Hardy spaces is defined by
\begin{equation*}
  h^p:=\{f\in\mathscr{S}': \|f\|_{h^p}=\|\sup_{0<t<1}|\psi_t\ast f|\|_{L^p}<\infty\}.
\end{equation*}
We note that the definition of the local Hardy spaces is independent of the choice of $\psi\in \mathscr{S}$.
We also remark that the local $h_p$ is equivalent with the inhomogeneous Triebel-Lizorkin space $F_{p,2}$ (see 1.4 in \cite{Tribel_92}).
A function $a$ is called a small $h_p$-atom if
\begin{equation}
  \begin{split}
    &\text{supp}a\subset Q,~ \|a\|_{L_{\infty}}\leqslant |Q|^{-1/p},
    \\
    &\int_{\mathbb{R}^n}x^{\gamma}a(x)\ for\ all\ |\gamma|\leqslant [n(1/p-1)],
  \end{split}
\end{equation}
where $Q$ is a cube with $|Q|<1$, $|Q|$ is the Lebesgue measure of $Q$, and $[n(1/p-1)]$ is the integer part of $n(1/p-1)$.
A function $a$ on $\mathbb{R}^n$ is called a big $h_p$-atom if there exists a cube $Q$ with $|Q|\geqslant 1$, such that
\begin{equation}
\text{supp}a\subset Q, \|a\|_{L_{\infty}}\leqslant |Q|^{-1/p}.
\end{equation}
All the big and small-$h_p$ atoms are collectively called $h_p$-atom.
We recall $\|a\|_{h_p}\lesssim 1$ for all $h_p$-atoms. On the other hand, any $f\in h_p$ can be represented by
\begin{equation}
  f=\sum_{j=1}^{\infty}\lambda_ja_j,
\end{equation}
where the series converges in the sense of distribution, $\{a_j\}$ is a collection of $h_p$-atoms and $\{\lambda_j\}$ is a sequence of complex numbers such that
\begin{equation}
  \bigg(\sum_{j=1}^{\infty} \lambda_j^p\bigg)^{1/p}\lesssim \|f\|_{h_p}.
\end{equation}

We recall two convolution inequalities for continuous and discrete form respectively.
\begin{lemma}[{Weighted convolution in $L_{p}$ with $p<1$}]\label{lemma, convolution for p<1}
Let $0<p< 1$, $s\in \mathbb{R}$ and $B(x_0,R)=\{x: |x-x_0|\le R\}$.
Suppose $f, g\in L^p$ with Fourier support in $B(x_0,R)$ and $B(x_1,R)$ respectively.
Then there exists a constant $C_R>0$ which is independent of $x_0, x_1$ such that
$$
\||f|*|g|\|_{L_p^s} \le C_R \|f\|_{L_p^s} \|g\|_{L_p^{|s|}}.
$$
\end{lemma}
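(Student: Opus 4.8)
The plan is to strip the polynomial weight off by Peetre's inequality, reduce the claim to an unweighted $L^p$ convolution bound for functions with compact spectrum, and then localise onto the unit lattice $\{Q_k=k+[0,1)^n\}_{k\in\mathbb Z^n}$. The basic mechanism behind everything is the reproducing identity: fixing $\eta\in C_c^\infty$ with $\eta\equiv1$ on $\{\abs\xi\le R\}$ and $\operatorname{supp}\eta\subset\{\abs\xi\le 2R\}$ and putting $\Phi=\mathscr F^{-1}\eta$ (which depends only on $n,R$), the spectral condition on $f$ gives $f=f\ast(M_{x_0}\Phi)$, hence the pointwise bound $\abs f\le\abs f\ast\abs\Phi$, and similarly for $g$ with the \emph{same} $\Phi$; since $\abs{M_{x_0}\Phi}=\abs\Phi$, all constants produced this way depend only on $R$, which is the source of the claimed uniformity in $x_0,x_1$.

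To remove the weight I would use Peetre's inequality $\langle x\rangle^{s}\lesssim\langle x-y\rangle^{s}\langle y\rangle^{\abs s}$ (valid for every real $s$), which yields
\[
\langle x\rangle^{s}\big(\abs f\ast\abs g\big)(x)\ \lesssim\ (u\ast v)(x),\qquad u:=\langle\cdot\rangle^{s}\abs f,\quad v:=\langle\cdot\rangle^{\abs s}\abs g,
\]
and $\|u\|_{L^p}=\|f\|_{L_p^s}$, $\|v\|_{L^p}=\|g\|_{L_p^{\abs s}}$. So it suffices to prove the unweighted bound $\|u\ast v\|_{L^p}\le C_R\|u\|_{L^p}\|v\|_{L^p}$.

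For this I would discretise. The key input is the Nikolskii/Plancherel--Polya inequality for $0<p<1$: for $f$ with spectrum in a ball of radius $R$, $\|f\|_{L^\infty(Q_k)}^p\le C_R\sum_j\langle k-j\rangle^{-\mathscr L}\|f\|_{L^p(Q_j)}^p$ with $\mathscr L$ as large as we please (the constant depending only on $R$, since $\abs f$ is unchanged when the spectrum is translated to be centred at the origin). Because $\langle\cdot\rangle^{s}\asymp\langle k\rangle^{s}$ on $Q_k$ and $\langle k\rangle^{s}\lesssim\langle k-j\rangle^{\abs s}\langle j\rangle^{s}$, this promotes to $\|u\|_{L^\infty(Q_k)}^p\lesssim_R\sum_j\langle k-j\rangle^{-\mathscr L}\|u\|_{L^p(Q_j)}^p$, and summing over $k$ (with $\mathscr L>n$) gives $\sum_k\|u\|_{L^\infty(Q_k)}^p\lesssim_R\|u\|_{L^p}^p$, and the same for $v$. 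On the other hand, writing $u=\sum_i u\chi_{Q_i}$, $v=\sum_j v\chi_{Q_j}$, the piece $(u\chi_{Q_i})\ast(v\chi_{Q_j})$ is supported in a fixed neighbourhood of $Q_{i+j}$ and bounded by $\|u\|_{L^\infty(Q_i)}\|v\|_{L^\infty(Q_j)}$; hence on a given output cube only $O(1)$ values of $i+j$ contribute, and using the $p$-subadditivity $(\sum_k a_k)^p\le\sum_k a_k^p$ ($0<p<1$) together with the bounded overlap of $\{Q_i+Q_j\}$ one obtains
\[
\|u\ast v\|_{L^p}^p=\sum_m\|u\ast v\|_{L^p(Q_m)}^p\ \lesssim\ \Big(\sum_i\|u\|_{L^\infty(Q_i)}^p\Big)\Big(\sum_j\|v\|_{L^\infty(Q_j)}^p\Big).
\]
Combining the two displays gives $\|u\ast v\|_{L^p}\le C_R\|u\|_{L^p}\|v\|_{L^p}$, which together with the reduction above proves the lemma.

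The main obstacle is the summability $\sum_k\|f\|_{L^\infty(Q_k)}^p\lesssim_R\|f\|_{L^p}^p$ for band-limited $f$: here the spectral constraint is indispensable, because for $p<1$ convolution is unbounded on $L^p$ even for compactly supported data, so no naive Young or H\"older estimate on a single cube is available --- one must pass through the Plancherel--Polya / Peetre maximal function estimate for functions with compact spectrum and then carefully redistribute the polynomial weight $\langle\cdot\rangle^{s}$ through it by Peetre's inequality (this is also what forces the asymmetric weights $L_p^{s}$ versus $L_p^{\abs s}$ in the statement). The remaining ingredients --- the $x_0,x_1$-uniformity, which is immediate from the modulation invariance of $\abs{\,\cdot\,}$, and the double-sum bookkeeping --- are routine.
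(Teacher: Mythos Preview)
Your argument is correct, but it takes a considerably longer route than the paper's. You first strip the weight via Peetre's inequality, then discretise onto unit cubes, invoke a Plancherel--Polya/Peetre-maximal estimate to pass from $L^p$ on cubes to $\ell^p$ of cube-suprema, and finally reassemble the convolution by $p$-subadditivity. All of these steps are sound (in particular, your handling of the weight through the discrete Peetre inequality $\langle k\rangle^{sp}\lesssim\langle k-j\rangle^{|s|p}\langle j\rangle^{sp}$ is exactly what is needed, and the modulation invariance $|M_{x_0}\Phi|=|\Phi|$ does give the uniformity in $x_0,x_1$).

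The paper's proof is much shorter because it exploits one observation you do not use: for each fixed $x$, the function $H_x(y)=f(x-y)g(y)$ is itself band-limited, with Fourier support in $B(x_1-x_0,2R)$. A single application of the Nikolskii inequality $\|H_x\|_{L^1}\lesssim_R\|H_x\|_{L^p}$ then gives
\[
(|f|\ast|g|)(x)\ \lesssim_R\ \Big(\int_{\mathbb{R}^n}|f(x-y)|^p|g(y)|^p\,dy\Big)^{1/p},
\]
after which one multiplies by $\langle x\rangle^s$, inserts Peetre's inequality $\langle x\rangle^s\lesssim\langle x-y\rangle^s\langle y\rangle^{|s|}$ \emph{inside} the integral, and applies Fubini in $L^p_x$. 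So the paper turns the $L^1$ convolution into an $L^p$ object in one stroke, whereas you achieve the same end through a lattice decomposition of both factors separately. Your approach is more hands-on and perhaps more portable to situations where the product $f(x-\cdot)g(\cdot)$ fails to be band-limited, but for the present lemma the paper's trick yields a three-line proof.
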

\begin{proof}
This lemma can be found in \cite{Triebel_book_1983} when $s=0$.
By a standard limiting argument, we only need to state the proof for $f,g \in \mathscr{S}$.
In this case, we observe that $H(y)=f(x-y)g(y)$ is a Schwartz function with Fourier support in $B(x_1-x_0, 2R)$ for every $x\in \mathbb{R}^n$.
By the compactness of Fourier support of $H$, we have
\begin{equation}
\int_{\mathbb{R}^n}|f(x-y)g(y)|dy\lesssim_R \left(\int_{\mathbb{R}^n}|f(x-y)g(y)|^pdy\right)^{1/p}.
\end{equation}
Observing $\langle x\rangle^s \lesssim \langle x-y\rangle^s\langle y\rangle^{|s|}$, we deduce
\begin{equation}
\left(\int_{\mathbb{R}^n}|f(x-y)g(y)|dy\right)\langle x\rangle^s\lesssim \left(\int_{\mathbb{R}^n}|f(x-y)\langle x-y\rangle^s g(y)\langle y\rangle^{|s|}|^pdy\right)^{1/p}.
\end{equation}
Taking the $L_{x;p}$-norm on both sides of the above inequality, we then use the Fubini theorem to obtain the desired conclusion.
\end{proof}

\begin{lemma}[{Weighted convolution in $l_{p}$}]\label{lemma, convolution for lp}
Let $0<p\leqslant \infty$, $s\in \mathbb{R}$. We have
\begin{equation}
  l_p^s\ast l_{p \wedge  1}^{|s|} \subset l_p^s.
\end{equation}
\end{lemma}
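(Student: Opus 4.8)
The plan is to absorb the weights by a Peetre-type inequality, thereby reducing the weighted estimate to the classical unweighted convolution inequality on $\mathbb{Z}^n$.

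First I would record the elementary weight inequality
\[
\langle k\rangle^s \lesssim \langle k-m\rangle^s\langle m\rangle^{|s|}, \qquad k,m\in\mathbb{Z}^n,\ s\in\mathbb{R}.
\]
For $s\geqslant 0$ this follows from the submultiplicativity $\langle a+b\rangle\leqslant \sqrt{2}\,\langle a\rangle\langle b\rangle$ applied with $a=k-m$, $b=m$ (and $|s|=s$); for $s<0$, writing $s=-t$ with $t>0$, it is equivalent to $\langle k-m\rangle^t\lesssim \langle k\rangle^t\langle m\rangle^t$, which is the same submultiplicativity with $a=k$, $b=-m$.

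Next I would set, for $a\in l_p^s$ and $b\in l_{p\wedge 1}^{|s|}$, the sequences $\widetilde{a}(k)=|a(k)|\langle k\rangle^s$ and $\widetilde{b}(m)=|b(m)|\langle m\rangle^{|s|}$, so that $\|\widetilde{a}\|_{l_p}=\|a\|_{l_p^s}$ and $\|\widetilde{b}\|_{l_{p\wedge 1}}=\|b\|_{l_{p\wedge 1}^{|s|}}$. The weight inequality then gives
\[
\langle k\rangle^s\,|(a*b)(k)| \leqslant \langle k\rangle^s\sum_{m\in\mathbb{Z}^n}|a(k-m)|\,|b(m)| \lesssim \sum_{m\in\mathbb{Z}^n}\widetilde{a}(k-m)\,\widetilde{b}(m) = (\widetilde{a}*\widetilde{b})(k),
\]
so it suffices to prove the unweighted inequality $\|\widetilde{a}*\widetilde{b}\|_{l_p}\lesssim\|\widetilde{a}\|_{l_p}\|\widetilde{b}\|_{l_{p\wedge 1}}$.

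Finally I would split into two cases. When $1\leqslant p\leqslant\infty$ we have $p\wedge 1=1$ and the required bound is precisely Young's inequality $l_p*l_1\subset l_p$. When $0<p<1$ we have $p\wedge 1=p$, and the subadditivity of $t\mapsto t^p$ together with Fubini yields
\[
\|\widetilde{a}*\widetilde{b}\|_{l_p}^p=\sum_{k}\Big|\sum_{m}\widetilde{a}(k-m)\widetilde{b}(m)\Big|^p\leqslant\sum_{k}\sum_{m}\widetilde{a}(k-m)^p\,\widetilde{b}(m)^p=\|\widetilde{a}\|_{l_p}^p\,\|\widetilde{b}\|_{l_p}^p.
\]
There is no genuine obstacle here; the only points deserving care are the validity of the Peetre-type inequality for negative $s$ (hence the weight $\langle m\rangle^{|s|}$ rather than $\langle m\rangle^{s}$ on the second factor) and the case distinction at $p=1$, which is exactly why the exponent $p\wedge 1$, and not $1$, appears in the hypothesis.
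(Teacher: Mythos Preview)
Your proof is correct. The paper does not actually give its own proof of this lemma; it simply states that ``this lemma is a special version of discrete weighted convolution inequality, which can be deduced without difficult,'' and refers to \cite{Guo_Characterization} for a general version. Your argument---Peetre's inequality to absorb the weights, followed by Young's inequality for $p\geqslant 1$ and the $p$-subadditivity for $p<1$---is exactly the standard route one would take, and is presumably what the authors had in mind.
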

This lemma is a special version of discrete weighted convolution inequality, which can be deduced without difficult.
The reader can find its general version in \cite{Guo_Characterization}.

Next, we recall a probability inequality. One can find its proof in \cite{Gut_book}.
\begin{lemma}[Khinchin's inequality, see \cite{Gut_book}]
  Let $0<p<\infty$, $\{\omega_k\}_{k=1}^N$ be a sequence of independent random variables taking values $\pm 1$ with equal probability. Denote expectation (integral over the probability space) by $\mathbb{E}$.
  For any sequence of complex numbers $\{a_k\}_{k=1}^N$, we have
  \begin{equation}
    \mathbb{E}(|\sum_{k=1}^Na_k\omega_k|^p)\sim \left(\sum_{k=1}^N|a_k|^2\right)^{\frac{p}{2}},
  \end{equation}
  where the implicit constants depend on $p$ only.
\end{lemma}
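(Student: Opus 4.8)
The plan is to reduce everything to real coefficients under a single normalization, then obtain the upper bound from a subgaussian tail estimate valid for all exponents, and split the lower bound into the easy range $p\geqslant 2$ (Jensen) and the delicate range $0<p<2$ (a log-convexity bootstrap off a higher moment).

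First I would normalize: by homogeneity in the sequence $\{a_k\}$ we may assume $\sum_{k=1}^N|a_k|^2=1$, so that the claim reduces to $\mathbb{E}(|S|^p)\sim 1$ with $S=\sum_{k=1}^Na_k\omega_k$ and implicit constants depending only on $p$ (in particular independent of $N$). Writing $a_k=b_k+ic_k$ with $b_k,c_k\in\mathbb{R}$, we have $S=U+iV$ where $U=\sum b_k\omega_k$ and $V=\sum c_k\omega_k$ are real; the elementary comparison $\max\{|U|,|V|\}\leqslant|S|\leqslant\sqrt{2}\,\max\{|U|,|V|\}$ together with $|S|^p\leqslant 2^{p/2}(|U|^p+|V|^p)$ reduces both inequalities to the case of real coefficients (for the lower bound one also uses $\max\{\sum b_k^2,\sum c_k^2\}\geqslant\tfrac12$). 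From now on $a_k\in\mathbb{R}$ and $\sum a_k^2=1$.

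For the upper bound I would estimate the moment generating function: by independence $\mathbb{E}(e^{tS})=\prod_k\cosh(ta_k)\leqslant\prod_k e^{t^2a_k^2/2}=e^{t^2/2}$, using the elementary inequality $\cosh x\leqslant e^{x^2/2}$ (compare Taylor coefficients). A Chernoff bound, together with the fact that $-S$ has the same distribution as $S$, then gives $\mathbb{P}(|S|>\lambda)\leqslant 2e^{-\lambda^2/2}$ for every $\lambda>0$; integrating, $\mathbb{E}(|S|^p)=\int_0^\infty p\lambda^{p-1}\mathbb{P}(|S|>\lambda)\,d\lambda\leqslant 2p\int_0^\infty\lambda^{p-1}e^{-\lambda^2/2}\,d\lambda=C_p<\infty$ for every $p\in(0,\infty)$, which is the upper bound. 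For the lower bound, observe first that $\mathbb{E}(S^2)=\sum a_k^2=1$ since $\mathbb{E}(\omega_j\omega_k)=\delta_{jk}$. When $p\geqslant 2$, Jensen's inequality on the probability space gives $\mathbb{E}(|S|^p)\geqslant(\mathbb{E}(S^2))^{p/2}=1$. When $0<p<2$, choose $\theta\in(0,1)$ with $\tfrac12=\tfrac{1-\theta}{p}+\tfrac{\theta}{4}$ and apply log-convexity (Hölder) of the $L^r$ norms on the probability space: $1=\|S\|_2\leqslant\|S\|_p^{1-\theta}\|S\|_4^{\theta}\leqslant\|S\|_p^{1-\theta}C_4^{\theta/4}$, where $C_4=\mathbb{E}(|S|^4)$ is finite by the previous step; rearranging yields $\mathbb{E}(|S|^p)=\|S\|_p^p\geqslant C_4^{-p\theta/(4(1-\theta))}=:c_p>0$.

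The only step calling for a real idea is the lower bound in the range $0<p<2$: a direct moment computation does not control $\|S\|_p$ from below, and one must play the small exponent against a higher one — here $L^4$, whose boundedness is already available from the subgaussian estimate. Everything else (the MGF computation, Chernoff, Jensen, and the reductions to real coefficients and to the normalized case) is routine; the point to watch is that all constants stay independent of $N$ and of the particular sequence $\{a_k\}$, which the normalization $\sum|a_k|^2=1$ ensures.
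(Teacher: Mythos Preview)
Your proof is correct and complete. The paper itself does not prove this lemma at all: it merely states Khinchin's inequality and refers the reader to Gut's probability textbook for a proof, so there is nothing in the paper to compare your argument against. For what it is worth, the standard textbook route is essentially the one you outline (subgaussian tail via the moment generating function for the upper bound, $\mathbb{E}S^2=1$ plus a H\"older/Paley--Zygmund type interpolation for the lower bound when $p<2$), so your approach is entirely in line with what the cited reference would contain; the constants you produce are independent of $N$ and of the sequence, as required.
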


Finally, we recall two embedding lemmas for weighted sequence spaces.

\begin{lemma}[Sharpness of embedding, for uniform decomposition] \label{lemma, Sharpness of embedding, for uniform decomposition}
Suppose $0<q_1,q_2\leqslant \infty$, $s_1,s_2\in \mathbb{R}$. Then
\begin{equation}
l_{q_1}^{s_1,0}\subset l_{q_2}^{s_2,0}
\end{equation}
holds if and only if
\begin{equation}
\begin{cases}
s_2\leqslant s_1  \\
\frac{1}{q_2}+\frac{s_2}{n}< \frac{1}{q_1}+\frac{s_1}{n}
\end{cases}
\text{or} \hspace{10mm}
\begin{cases}
s_2=s_1\\
q_2=q_1.
\end{cases}
\end{equation}
\end{lemma}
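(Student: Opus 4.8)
The plan is to transfer the weighted embedding into a pointwise-multiplier problem between unweighted sequence spaces. Writing $g(k)=f(k)\langle k\rangle^{s_1}$ gives an isometric identification of $l_{q_1}^{s_1,0}$ with $\ell^{q_1}$, under which $f\in l_{q_2}^{s_2,0}$ is the same as $\langle\cdot\rangle^{\sigma}g\in\ell^{q_2}$ with $\sigma:=s_2-s_1$. Hence the inclusion $l_{q_1}^{s_1,0}\subset l_{q_2}^{s_2,0}$ is equivalent to the statement that $k\mapsto\langle k\rangle^{\sigma}$ is a bounded pointwise multiplier from $\ell^{q_1}$ to $\ell^{q_2}$; the boundedness of the inclusion map, needed in the ``only if'' direction, is automatic from the closed graph theorem, which applies since all the spaces involved are complete quasi-normed spaces. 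Throughout I will use the counting estimate $\#\{k\in\mathbb{Z}^n:\langle k\rangle\sim 2^{j}\}\sim 2^{jn}$, so that $\sum_{k}\langle k\rangle^{tq}\sim\sum_{j\geqslant 0}2^{j(n+tq)}$ when $q<\infty$.

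For the ``if'' direction I would split on the relative size of $q_1$ and $q_2$. When $q_1\leqslant q_2$, the trivial nesting $\ell^{q_1}\subset\ell^{q_2}$ together with $\langle k\rangle^{\sigma}\leqslant 1$ (valid because $\sigma=s_2-s_1\leqslant 0$ and $\langle k\rangle\geqslant 1$) already gives the multiplier bound; here the first alternative of the hypothesis is just the degenerate case $\sigma=0$, $q_1=q_2$. When $q_1>q_2$ (so $q_2<\infty$), I would apply Hölder's inequality with exponents $q_1$ and $r$, where $1/r=1/q_2-1/q_1$, reducing the multiplier bound to $\langle\cdot\rangle^{\sigma}\in\ell^{r}$, i.e.\ to $\sigma r<-n$; rearranging, this is exactly $1/q_2+s_2/n<1/q_1+s_1/n$, the second alternative.

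For the ``only if'' direction I would run three test sequences. First, testing the (continuous) inclusion on $f=\delta_{k_0}$ with $|k_0|\to\infty$ forces $\langle k_0\rangle^{\sigma}\lesssim 1$, hence $\sigma\leqslant 0$, i.e.\ $s_2\leqslant s_1$. This already settles every case with $q_2\geqslant q_1$: if $\sigma<0$ then $\sigma<0\leqslant n(1/q_1-1/q_2)$ gives the strict inequality, while if $\sigma=0$ then either $q_1=q_2$ or $q_1<q_2$, matching the two alternatives. When $q_2<q_1$, testing on $f=\mathbf 1_{\{\langle k\rangle\leqslant N\}}$ and comparing the growth $N^{n/q_2+\sigma}$ with $N^{n/q_1}$ forces $\sigma\leqslant n(1/q_1-1/q_2)$; to upgrade this to strict inequality I would exclude the endpoint $\sigma=n(1/q_1-1/q_2)$ by testing on the logarithmically corrected sequence $f(k)=\langle k\rangle^{-n/q_1}\bigl(\log(e+|k|)\bigr)^{-1/q_2}$, for which $f\in\ell^{q_1}$ while $\langle\cdot\rangle^{\sigma}f\notin\ell^{q_2}$ (the two reduce to convergence, resp.\ divergence, of $\sum_j j^{-a}$ with $a=q_1/q_2>1$, resp.\ $a=1$). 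Collecting the cases yields exactly the stated dichotomy.

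I expect the only genuinely delicate point to be this endpoint $\sigma=n(1/q_1-1/q_2)$, which can occur only when $q_2<q_1$: there the crude ball test can fail to detect the obstruction (for instance when also $q_1<\infty$ both sides grow like $N^{n/q_1}$), and it is the logarithmic refinement above — equivalently, the sharp fact that the pointwise multipliers $\ell^{q_1}\to\ell^{q_2}$ for $q_2<q_1$ are precisely $\ell^{r}$ with $1/r=1/q_2-1/q_1$ — that really does the work. Everything else is routine bookkeeping over the relative sizes of $q_1$ and $q_2$, including the cases $q_1=\infty$ or $q_2=\infty$, where the $\ell^{q}$-nesting and Hölder's inequality are applied in the usual way.
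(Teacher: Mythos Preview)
The paper does not prove this lemma: it is simply recalled as a known fact about weighted sequence spaces, with no argument supplied. So there is nothing to compare your approach against.

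Your argument is correct. The reduction to the pointwise multiplier problem $\langle\cdot\rangle^{\sigma}:\ell^{q_1}\to\ell^{q_2}$ is the natural one, the Hölder step handles $q_1>q_2$, and the three test sequences (Dirac, indicator of a ball, and the logarithmically corrected sequence for the endpoint $\sigma=n(1/q_1-1/q_2)$) cleanly cover the necessity. The endpoint exclusion is indeed the only non-routine point, and your computation there is right: with $f(k)=\langle k\rangle^{-n/q_1}(\log(e+|k|))^{-1/q_2}$ one gets $\sum_j j^{-q_1/q_2}<\infty$ on the $\ell^{q_1}$ side and $\sum_j j^{-1}=\infty$ on the $\ell^{q_2}$ side.

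Two minor presentational points. First, you reuse the symbol $f$ both for the original element of $l_{q_1}^{s_1,0}$ and for the test sequences in the multiplier problem $\ell^{q_1}\to\ell^{q_2}$; once you have made the substitution $g(k)=f(k)\langle k\rangle^{s_1}$, it would be cleaner to call the test sequences $g$. Second, in the ``if'' paragraph the phrase ``here the first alternative of the hypothesis is just the degenerate case $\sigma=0$, $q_1=q_2$'' is a bit opaque; the content is simply that when $q_1\leqslant q_2$ your argument uses only $\sigma\leqslant 0$, which is available under either alternative of the hypothesis. Neither point affects the mathematics.
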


\begin{lemma}[Sharpness of embedding, for dyadic decomposition] \label{lemma, Sharpness of embedding, for dyadic decomposition}
Suppose $0<q_1, q_2\leqslant \infty$, $s_1, s_2\in \mathbb{R}$. Then
\begin{equation}
l_{q_1}^{s_1,1}\subset l_{q_2}^{s_2,1}
\end{equation}
holds if and only if
\begin{equation}
s_2<s_1
\hspace{10mm}
\text{or} \hspace{10mm}
\begin{cases}
s_2=s_1,\\
1/q_2\leqslant 1/q_1.
\end{cases}
\end{equation}
\end{lemma}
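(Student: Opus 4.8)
The plan is to reduce the weighted inclusion to a rescaled version and then split into two regimes according to the sign of $s_2-s_1$. The map $a\mapsto\{2^{js_1}a(j)\}_{j\in\mathbb{N}}$ is an isometric isomorphism from $l_{q_1}^{s_1,1}$ onto $\ell^{q_1}(\mathbb{N})$, and at the same time it carries $l_{q_2}^{s_2,1}$ isometrically onto the weighted space $\ell^{q_2}_\sigma:=\{c:\sum_j 2^{j\sigma q_2}|c(j)|^{q_2}<\infty\}$ with $\sigma:=s_2-s_1$ (with the usual $\sup$-modification when $q_2=\infty$). Hence $l_{q_1}^{s_1,1}\subset l_{q_2}^{s_2,1}$ if and only if $\ell^{q_1}(\mathbb{N})\subset\ell^{q_2}_\sigma$, and it remains to decide when the latter holds.

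For the \emph{if} direction: if $\sigma<0$ (that is, $s_2<s_1$), then when $q_1\leqslant q_2$ one combines the elementary embedding $\ell^{q_1}\subset\ell^{q_2}$ with the bound $2^{j\sigma}\leqslant 1$, while when $q_1>q_2$ one applies Hölder's inequality with the exponent split $1/q_2=1/q_1+1/r$ against the sequence $\{2^{j\sigma}\}_j$, which lies in $\ell^{r}$ for every $r\in(0,\infty]$ since $\sigma<0$ forces geometric decay; the case $q_2=\infty$ is immediate. This handles all exponents, including the quasi-Banach range $q_i<1$, because the form of Hölder used only requires $q_1/q_2\geqslant 1$ and $r/q_2\geqslant 1$, both of which hold here. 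If instead $\sigma=0$ and $1/q_2\leqslant 1/q_1$, the inclusion is just the classical $\ell^{q_1}\subset\ell^{q_2}$.

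For the \emph{only if} direction: since $l_{q_1}^{s_1,1}$ and $l_{q_2}^{s_2,1}$ are quasi-Banach spaces, the closed graph theorem gives a constant $C$ with $\|\cdot\|_{l_{q_2}^{s_2,1}}\leqslant C\|\cdot\|_{l_{q_1}^{s_1,1}}$. Testing on the Dirac sequence $\delta_N$ (equal to $1$ at the index $N$, $0$ elsewhere) gives $2^{Ns_2}\leqslant C\,2^{Ns_1}$ for all $N\in\mathbb{N}$, and letting $N\to\infty$ forces $s_2\leqslant s_1$. If moreover $s_2=s_1$, testing on $a_N(j):=2^{-js_1}\mathbf{1}_{\{1,\dots,N\}}(j)$ gives $N^{1/q_2}\leqslant C\,N^{1/q_1}$ for all $N$ (with $N^{1/\infty}=1$), so $1/q_2\leqslant 1/q_1$. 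Together these yield exactly the stated dichotomy.

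The argument is mostly bookkeeping; the only points needing a little care are the correct quasi-Banach form of Hölder's inequality when some $q_i<1$, the sign convention that $j$ ranges over $\mathbb{N}$ so that $2^{j(s_2-s_1)}$ is a decaying weight when $s_2<s_1$, and the various $q=\infty$ endpoint modifications. Note that, in contrast to Lemma \ref{lemma, Sharpness of embedding, for uniform decomposition}, the condition collapses from $1/q_2+s_2/n<1/q_1+s_1/n$ to the simpler $s_2<s_1$ precisely because the dyadic weights $2^{js}$ are geometric, so a strictly smaller exponent $s_2$ dominates regardless of the summability indices.
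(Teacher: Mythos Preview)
Your argument is correct. The paper does not supply a proof of this lemma; it is listed among the preliminary results and taken for granted, so there is nothing to compare against beyond noting that what you have written is the standard elementary verification.

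A few minor remarks. First, throughout the paper the symbol $\subset$ between function or sequence spaces already denotes a continuous embedding, so the appeal to the closed graph theorem in the necessity part is not strictly required; the inequality $\|\cdot\|_{l_{q_2}^{s_2,1}}\leqslant C\|\cdot\|_{l_{q_1}^{s_1,1}}$ may be taken as the hypothesis directly. Second, in the paper's convention $\mathbb{N}=\mathbb{Z}^{+}\cup\{0\}$, so your test sequence $a_N$ could equally well be supported on $\{0,1,\dots,N-1\}$; this makes no difference to the conclusion. Third, your H\"older step in the case $\sigma<0$, $q_1>q_2$ is correctly set up: with $p=q_1/q_2$ and $p'=r/q_2$ one has $p,p'\geqslant 1$ and $1/p+1/p'=1$, so the classical H\"older inequality applies to the sequences $\{|c(j)|^{q_2}\}$ and $\{2^{j\sigma q_2}\}$ regardless of whether $q_1,q_2$ lie below $1$.
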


\section{Characterizations by localization and discretization}
This section is devoted to some mild characterizations of the inclusion relations between Wiener amalgam and Triebel-Lizorkin spaces.
We start from recalling a localization principle for the classical Triebel-Lizorkin space $F_{p,q}^s$.
\begin{lemma}[Localization principle for Triebel-Lizorkin spaces, cf. \cite{Triebel_book_2010}]\label{lemma, Localization principle for Triebel-Lizorkin space}
Let $0<p<\infty$, $0<q\leqslant \infty$, $s\in \mathbb{R}$.
Let $\{\psi_k\}_{k\in \mathbb{Z}^n}$ be a smooth uniform partition of unity such that
$\sum_{k\in \mathbb{Z}^n}\psi_k(x)=1$,
where $\psi_k(x)=\psi(x-k)$, $\psi$ is a smooth function with compact support near the origin.
Then
\begin{equation}
\left(\sum_{k\in \mathbb{Z}^n}\|\psi_kf\|^p_{F_{p,q}^s}\right)^{1/p}
\end{equation}
is an equivalent quasi-norm in $F_{p,q}^s$.
\end{lemma}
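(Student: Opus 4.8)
To show that the displayed expression is an equivalent quasi-norm on $F_{p,q}^s$ we must establish the two-sided estimate
\[
\Bigl(\sum_{k\in\mathbb{Z}^n}\|\psi_k f\|_{F_{p,q}^s}^p\Bigr)^{1/p}\sim\|f\|_{F_{p,q}^s},
\]
and I would treat the two inequalities separately. The analytic engine in both directions is the fast decay of the dyadic kernels: writing $K_j:=\mathscr{F}^{-1}\psi_j$, one has $|K_j(z)|\lesssim_N 2^{jn}(1+2^j|z|)^{-N}$ for every $N$, so $\Delta_j$ is essentially local at scale $2^{-j}$. Two elementary facts about the partition are used repeatedly: the supports $\operatorname{supp}\psi_k$ have overlap bounded by a constant $C_n$, and $\mathbb{Z}^n$ can be split into finitely many subfamilies inside each of which any two of the compact sets $\operatorname{supp}\psi_k$ are separated by a fixed positive distance.

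For $\sum_k\|\psi_k f\|_{F_{p,q}^s}^p\lesssim\|f\|_{F_{p,q}^s}^p$ I would split $\Delta_j(\psi_k f)=\psi_k\,\Delta_j f+[\Delta_j,\psi_k]f$, with commutator $[\Delta_j,\psi_k]f(x)=\int K_j(x-y)\bigl(\psi_k(x)-\psi_k(y)\bigr)f(y)\,dy$, and handle the two pieces by quasi-subadditivity. The term $\psi_k\Delta_j f$ is supported in $\operatorname{supp}\psi_k$ and dominated pointwise by $|\Delta_j f|$, so taking the $\ell^q(2^{js})$-norm in $j$, then the $L^p$-norm in $x$, and summing in $k$, the bounded overlap yields $\lesssim\|f\|_{F_{p,q}^s}^p$. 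For the commutator I would distinguish $x$ near $\operatorname{supp}\psi_k$, where the Lipschitz bound $|\psi_k(x)-\psi_k(y)|\lesssim|x-y|$ produces an extra factor $2^{-j}$ after absorbing $|x-y|$ into the kernel, hence domination by an $F_{p,q}^{s-1}$-type (bump-characterized) quantity of $f$ localized near $k$ that again sums by bounded overlap; and $x$ far from $\operatorname{supp}\psi_k$, where $\psi_k(x)=0$ and the bare kernel bound gives decay $\operatorname{dist}(x,\operatorname{supp}\psi_k)^{-N}$, so the remainder is summable in $k$.

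For the reverse inequality $\|f\|_{F_{p,q}^s}^p\lesssim\sum_k\|\psi_k f\|_{F_{p,q}^s}^p$ I would write $f=\sum_k\psi_k f$, apply the (quasi-)triangle inequality to reduce to a single separated subfamily $\Lambda$, and then prove $\bigl\|\sum_{k\in\Lambda}\psi_k f\bigr\|_{F_{p,q}^s}^p\lesssim\sum_{k\in\Lambda}\|\psi_k f\|_{F_{p,q}^s}^p$. Here, for fixed $x$, the main Littlewood--Paley contribution comes from the essentially unique $k\in\Lambda$ whose support is closest to $x$, all other indices being suppressed by $(1+\operatorname{dist}(x,\operatorname{supp}\psi_k))^{-N}$ from the kernel bound; estimating $\Delta_j\bigl(\sum_{k\in\Lambda}\psi_k f\bigr)(x)$ by this main term plus the fast-decaying tail, and taking the $\ell^q(2^{js})$- and $L^p$-norms, the tail is absorbed and one is left with the diagonal sum.

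The main obstacle is to run both arguments uniformly over the full quasi-Banach range $0<p<\infty$, $0<q\leqslant\infty$, $s\in\mathbb{R}$. When $p$ is small and $s$ very negative there is neither a Hardy--Littlewood maximal inequality nor an embedding of $F_{p,q}^s$ into $L^1_{\mathrm{loc}}$ available to control the ``far'' terms, so one must replace the pointwise quantities above by Peetre-type maximal functions $\Delta_j^{*}f(x)=\sup_y|\Delta_j f(x-y)|(1+2^j|y|)^{-a}$ with $a>n/\min(p,q)$ and re-derive the relevant characterization of $F_{p,q}^s$, or else first reduce to large $s$ by a Bessel-potential lifting and control the commutator $[(1-\Delta)^{-\sigma/2},\psi_k]$, which is smoothing of order $\sigma+1$ and localized near $k$. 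Verifying that the bounded-overlap bookkeeping survives either replacement is the heart of the matter; it is carried out in detail in \cite{Triebel_book_2010}.
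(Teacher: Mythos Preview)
The paper does not prove this lemma at all: it is stated with a ``cf.\ \cite{Triebel_book_2010}'' and then used as a black box, so there is no in-paper argument to compare your proposal against. Your sketch is a reasonable outline of the standard approach (commutator splitting, Peetre maximal functions for the quasi-Banach range, or lifting to large $s$), and you correctly point to the same reference for the full details; in effect you have supplied more than the paper does.
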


Next, we give a similar localization result for the Wiener amalgam space $W_{p,q}^s$.
Due to the symmetry of time-frequency, localization principle in Wiener amalgam spaces is more convenient to establish than which in Triebel-Lizorkin spaces.

\begin{proposition}[Localization principle for Wiener amalgam spaces]\label{lemma, Localization principle for Wiener amalgam space}
Let $0<p, q\leqslant \infty$, $s\in \mathbb{R}$.
Let $\{\psi_k\}_{k\in \mathbb{Z}^n}$ be a smooth uniform partition of unity such that
$\sum_{k\in \mathbb{Z}^n}\psi_k(x)=1$,
where $\psi_k(x)=\psi(x-k)$, $\psi$ is a smooth function with compact support near the origin.
Then
\begin{equation}
\left(\sum_{k\in \mathbb{Z}^n}\|\psi_kf\|^p_{W_{p,q}^s}\right)^{1/p}
\end{equation}
with usual modification for $p=\infty$ is an equivalent quasi-norm in $W_{p,q}^s$.
\end{proposition}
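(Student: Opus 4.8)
The plan is to recast the claimed equivalence through the short-time Fourier transform, reduce it to two one-sided ``almost locality'' estimates, and then sum these up by means of the weighted convolution inequalities of Section~2. Fix a window $\phi\in\mathscr{S}$; by the window-independence of the $W_{p,q}^{s}$-norm we may take $\phi$ with $\widehat{\phi}\in C_{c}^{\infty}$ (so that $\phi\in\mathscr{S}$ still holds). The advantage of this choice is that, for each fixed $\xi$, the function $x\mapsto V_{\phi}f(x,\xi)$ then has Fourier support in the fixed compact set $\operatorname{supp}\widehat{\phi}$, uniformly in $\xi$; this endows the short-time Fourier transform with the Plancherel--P\'olya (sub-mean-value) property in the time variable that is needed once $p<1$. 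Set $G_{g}(x):=\|V_{\phi}g(x,\cdot)\|_{L_{\xi,q}^{s}}$, so that $\|g\|_{W_{p,q}^{s}}=\|G_{g}\|_{L_{p}}$; since $\psi_{k}f=T_{k}(\psi\cdot T_{-k}f)$ and $\|T_{a}f\|_{W_{p,q}^{s}}=\|f\|_{W_{p,q}^{s}}$, all constants below are automatically uniform in $k$. We may assume $f\in\mathscr{S}$ and conclude by density.

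The heart of the matter is the pair of estimates
\begin{equation*}
G_{\psi_{k}f}(x)\lesssim\langle x-k\rangle^{-\mathscr{L}}\,\|\psi_{k}f\|_{W_{p,q}^{s}},
\qquad
G_{\psi_{k}f}(x)\lesssim\langle x-k\rangle^{-\mathscr{L}}\,\bigl(|G_{f}|^{r}\ast\langle\,\cdot\,\rangle^{-\mathscr{L}}\bigr)^{1/r}(x),
\end{equation*}
valid for a sufficiently large $\mathscr{L}$ and every $0<r\leqslant\min\{1,p,q\}$. Both rest on the reconstruction (change-of-window) identity: writing $V_{\phi}(\psi_{k}f)(x,\xi)=\langle f,\psi_{k}M_{\xi}T_{x}\phi\rangle$ and inserting the short-time Fourier inversion formula for $f$ --- or, for the first estimate, reconstructing $\psi_{k}f$ by means of a compactly supported auxiliary window $\gamma$ --- expresses $V_{\phi}(\psi_{k}f)(x,\xi)$ as an average of $V_{\phi}f(x',\xi')$ (respectively $V_{\gamma}(\psi_{k}f)(x',\xi')$) against a kernel that decays rapidly in $x-x'$, $x'-k$ and $\xi-\xi'$ and, decisively, carries the extra factor $\langle x-k\rangle^{-\mathscr{L}}$ --- the last point because $\psi_{k}$ is supported in a ball of fixed radius about $k$ while $\phi$ and $\gamma$ are Schwartz. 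One then takes the $L_{\xi,q}^{s}$-norm, using $\langle\xi\rangle^{s}\lesssim\langle\xi-\xi'\rangle^{|s|}\langle\xi'\rangle^{s}$ to transfer the weight, the (quasi-)Minkowski inequality, and the weighted convolution inequalities of Lemmas~\ref{lemma, convolution for p<1} and \ref{lemma, convolution for lp} --- the latter applied after discretizing the frequency variable, which is how the range $q<1$ is absorbed. For the first estimate one moreover uses the Plancherel--P\'olya property together with the window-independence of the norm to replace the remaining local $x'$-integral by $\|\psi_{k}f\|_{W_{p,q}^{s}}$.

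Granting these two estimates, the proof is completed by summation. For the inequality $\bigl(\sum_{k}\|\psi_{k}f\|_{W_{p,q}^{s}}^{p}\bigr)^{1/p}\lesssim\|f\|_{W_{p,q}^{s}}$ one raises the second estimate to the power $p$, sums over $k$ with the help of $\sum_{k}\langle x-k\rangle^{-\mathscr{L}p}\lesssim1$, and applies Young's inequality $L^{1}\ast L^{p/r}\subset L^{p/r}$, which is licit because $p/r\geqslant1$. For the reverse inequality one expands $V_{\phi}f=\sum_{k}V_{\phi}(\psi_{k}f)$, uses the quasi-triangle inequality in $L_{\xi,q}^{s}$ to obtain $G_{f}\leqslant\bigl(\sum_{k}G_{\psi_{k}f}^{\min\{1,q\}}\bigr)^{1/\min\{1,q\}}$, inserts the first estimate, discretizes $x$ into unit cubes $Q_{l}$, and concludes with the discrete Young inequality $\ell^{1}\ast\ell^{p/\min\{1,q\}}\subset\ell^{p/\min\{1,q\}}$ together with the bound $\sum_{l}\bigl(\int_{Q_{l}}G_{\psi_{k}f}\bigr)^{p}\lesssim\|\psi_{k}f\|_{W_{p,q}^{s}}^{p}$, which is itself a consequence of the Plancherel--P\'olya property.

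The only genuine obstacle is the quasi-Banach range $0<p,q<1$: there Minkowski's integral inequality and Young's convolution inequality both fail, so the ``locality'' of the short-time Fourier transform must be quantified in a form that survives raising to a small power $r$ --- which is exactly why I would work with a band-limited window (to obtain the Plancherel--P\'olya estimates for free) and why the continuous and discrete weighted convolution inequalities of Section~2 are the decisive tools. Uniformity of the constants in $k$ costs nothing, being built in at the outset via translation invariance. In the classical range $1\leqslant p,q\leqslant\infty$ the same scheme runs verbatim with the ordinary Minkowski and Young inequalities, and there one may alternatively derive the statement from the Feichtinger-type discretization $\|f\|_{W_{p,q}^{s}}\sim\bigl\|\bigl(\|\psi_{k}f\|_{\mathscr{F}^{-1}L_{q}^{s}}\bigr)_{k}\bigr\|_{\ell^{p}}$ (cf.~\cite{Guo_Characterization}) combined with the uniform equivalence $\|\psi_{k}f\|_{W_{p,q}^{s}}\sim\|\psi_{k}f\|_{\mathscr{F}^{-1}L_{q}^{s}}$, the latter following from the finite overlap of the $\psi_{k}$ and the boundedness of multiplication by a fixed bump on $\mathscr{F}^{-1}L_{q}^{s}$.
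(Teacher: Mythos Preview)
Your scheme is sound in outline but takes a substantially harder road than the paper's, and the harder road is created by your choice of window. The paper picks $\phi$ with compact \emph{spatial} support (not compact Fourier support); this single change turns the ``almost locality'' you are trying to quantify into exact finite overlap, and the whole argument collapses to four lines. With such a $\phi$, $V_{\phi}(\psi_{k}f)(x,\xi)$ vanishes unless $x$ lies in a fixed neighbourhood of $k$, so for each $x$ only boundedly many $k$ contribute; the inequality $\|f\|_{W_{p,q}^{s}}\lesssim(\sum_{k}\|\psi_{k}f\|_{W_{p,q}^{s}}^{p})^{1/p}$ is then immediate from pointwise almost-orthogonality, with no decay estimates, change-of-window identities, or Plancherel--P\'olya arguments needed. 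For the reverse direction the paper uses the clean identity $V_{\phi}(\psi_{k}f)(x,\cdot)=\widehat{\psi_{k}}\ast V_{\phi}f(x,\cdot)$ and applies Young's inequality (or Lemma~\ref{lemma, convolution for p<1} when $q<1$) in the $\xi$-variable, after which the compact $x$-support of $V_{\phi}(\psi_{k}f)$ finishes the job by finite overlap. The spatially compact window is again decisive here: it forces $V_{\phi}f(x,\cdot)$, as a function of $\xi$, to have compact Fourier support (its inverse Fourier transform is $f\cdot\overline{T_{x}\phi}$), so that the hypothesis of Lemma~\ref{lemma, convolution for p<1} is met.

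By contrast, with your band-limited $\phi$ the function $\xi\mapsto V_{\phi}f(x,\xi)$ is \emph{not} band-limited, so neither Lemma~\ref{lemma, convolution for p<1} nor a direct Plancherel--P\'olya estimate is available in the frequency variable; this is precisely why you are forced into reconstruction kernels, auxiliary windows, and discretization in $\xi$. Your sketch of the $q<1$ case (``discretize the frequency variable and apply Lemma~\ref{lemma, convolution for lp}'') is therefore incomplete as written: passing from the $\xi'$-integral to a lattice sum requires a sub-mean-value bound on $V_{\phi}f(x',\cdot)$ that your window does not supply. This can be repaired (e.g.\ via a Gabor-frame expansion of $f$, or by reintroducing a spatially compact window at that step), but it is a genuine extra layer of work. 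A smaller slip: the discrete Young inequality $\ell^{1}\ast\ell^{p/\min\{1,q\}}\subset\ell^{p/\min\{1,q\}}$ you invoke requires $p\geqslant\min\{1,q\}$, which fails for $p<q<1$; Lemma~\ref{lemma, convolution for lp} would fix this. In short, your strategy can be made to work, but the paper's window choice makes the proposition almost trivial and bypasses every difficulty you list.
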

\begin{proof}
We only give the proof for $p, q<\infty$, since the other cases can be handled similarly.
Take window function $\phi$ to be a smooth function with compact support near the origin.
By the almost orthogonality of $\{V_{\phi}(\psi_kf)(x,\xi)\}_{k\in \mathbb{Z}^n}$, we have
\begin{equation}
|V_{\phi}f(x,\xi)|^q=|\sum_{k\in \mathbb{Z}^n}V_{\phi}(\psi_kf)(x,\xi)|^q\lesssim\sum_{k\in \mathbb{Z}^n}|V_{\phi}(\psi_kf)(x,\xi)|^q.
\end{equation}
Then
\begin{equation}
  \begin{split}
    \|V_{\phi}f(x,\xi)\|^q_{L^s_{\xi,q}}
    =&
    \int_{\mathbb{R}^n}|V_{\phi}f(x,\xi)|^q\langle \xi\rangle^{sq}d\xi
    \\
    \lesssim&
    \sum_{k\in \mathbb{Z}^n}\int_{\mathbb{R}^n}|V_{\phi}(\psi_kf)(x,\xi)|^q\langle \xi\rangle^{sq}d\xi
    \\
    =&
    \sum_{k\in \mathbb{Z}^n}\|V_{\phi}(\psi_kf)(x,\xi)\|^q_{L^s_{\xi,q}}.
  \end{split}
\end{equation}
Combining with the above inequality, we use the almost orthogonality of $\{\|V_{\phi}(\psi_kf)(x,\xi)\|_{L^s_{\xi,q}}\}$ to deduce
\begin{equation}
  \begin{split}
    \|f\|^p_{W_{p,q}^s}
    =&
    \big\|\|V_{\phi}f(x,\xi)\|_{L_{\xi, q}^s}\big\|^p_{L_{x,p}}
    \\
    \lesssim&
    \int_{\mathbb{R}^n}\left(\sum_{k\in \mathbb{Z}^n}\|V_{\phi}(\psi_kf)(x,\xi)\|^q_{L^s_{\xi,q}}\right)^{p/q}dx
    \\
    \lesssim &
    \sum_{k\in \mathbb{Z}^n}\int_{\mathbb{R}^n}\|V_{\phi}(\psi_kf)(x,\xi)\|^p_{L^s_{\xi,q}}dx
    \\
    =&
    \sum_{k\in \mathbb{Z}^n}\|\psi_kf\|^p_{W_{p,q}^s},
  \end{split}
\end{equation}
which implies $\|f\|_{W_{p,q}^s}\lesssim \left(\sum_{k\in \mathbb{Z}^n}\|\psi_kf\|^p_{W_{p,q}^s}\right)^{1/p}.$

Now, we turn to the proof of the opposite direction. By the definition of short-time Fourier transform, we have
\begin{equation}
  \begin{split}
      V_{\phi}(\psi_kf)(x,\xi)
      =&
      \mathscr{F}\big(\psi_k(\cdot)f(\cdot)\phi(\cdot-x)\big)(\xi)
      \\
      =&
      \big((\mathscr{F}\psi_k)(\cdot)\ast V_{\phi}(f)(x,\cdot)\big)(\xi).
  \end{split}
\end{equation}
Then, we use Young's inequality or Lemma \ref{lemma, convolution for p<1} to deduce
\begin{equation}
  \begin{split}
      \|V_{\phi}(\psi_kf)(x,\xi)\|_{L_{\xi,q}^s}
      =&
      \|\big((\mathscr{F}\psi_k)(\cdot)\ast V_{\phi}(f)(x,\cdot)\big)(\xi)\|_{L_{\xi,q}^s}
      \\
      \lesssim &
      \|V_{\phi}(f)(x,\xi)\|_{L_{\xi,q}^s}\|\mathscr{F}\psi_k\|_{L_{q\wedge 1}^{|s|}}
      \\
      \lesssim &
      \|V_{\phi}(f)(x,\xi)\|_{L_{\xi,q}^s}\|\mathscr{F}\psi\|_{L_{q\wedge 1}^{|s|}}
      \\
      \lesssim &
      \|V_{\phi}(f)(x,\xi)\|_{L_{\xi,q}^s}.
  \end{split}
\end{equation}
Observing that the support of $\|V_{\phi}(\psi_kf)(\cdot,\xi)\|_{L_{\xi,q}^s}$ is near $k$, we choose a positive number $r$ such that
$\text{supp}(\|V_{\phi}(\psi_kf)(\cdot,\xi)\|_{L_{\xi,q}^s})\subset B(k,r)$.
Thus,
\begin{equation}
  \begin{split}
    \sum_{k\in \mathbb{Z}^n}\|\psi_kf\|^p_{W_{p,q}^s}
    =&
    \sum_{k\in \mathbb{Z}^n}\int_{B(k,r)}\|V_{\phi}(\psi_kf)(x,\xi)\|^p_{L_{\xi,q}^s}dx
    \\
    \lesssim &
    \sum_{k\in \mathbb{Z}^n}\int_{B(k,r)}\|V_{\phi}(f)(x,\xi)\|^p_{L_{\xi,q}^s}dx
    \\
    \lesssim &
    \int_{\mathbb{R}^n}\|V_{\phi}(f)(x,\xi)\|^p_{L_{\xi,q}^s}dx=\|f\|^p_{W_{p,q}^s},
  \end{split}
\end{equation}
which implies $\left(\sum_{k\in \mathbb{Z}^n}\|\psi_kf\|^p_{W_{p,q}^s}\right)^{1/p}\lesssim \|f\|_{W_{p,q}^s}.$
\end{proof}

We give the following lemma for the local property of $W_{p,q}^s$. The reader can find an unweighted result in \cite{change of variable}.
\begin{lemma}\label{lemma, equivalent norm, compact support}
Let $0<p, q\leqslant \infty$, $s\in \mathbb{R}$,
$K$ be a compact subset of $\mathbb{R}^n$, and $f$ be a tempered distribution with compact support $K$.
Then $f\in W_{p,q}^s$ if and only if $\widehat{f}\in L_{q, s}$, and
\begin{equation}
\|f\|_{W_{p,q}^s}\sim_K  \|\widehat{f}\|_{L_{q,s}}.
\end{equation}
\end{lemma}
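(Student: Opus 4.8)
The plan is to reduce the statement to the local-to-global reassembly provided by Proposition~\ref{lemma, Localization principle for Wiener amalgam space} together with the elementary fact that, for a distribution whose spatial support is a fixed compact set, the Wiener amalgam norm collapses to a weighted Lebesgue norm of the Fourier transform. First I would use that $f$ has compact support $K$ to choose, from the smooth uniform partition of unity $\{\psi_k\}_{k\in\mathbb{Z}^n}$, only finitely many indices $k$ for which $\psi_k f\not\equiv 0$; call this finite set $\Lambda_K$, whose cardinality is controlled by $K$. By Proposition~\ref{lemma, Localization principle for Wiener amalgam space},
\begin{equation*}
\|f\|_{W_{p,q}^s}\sim_K\Big(\sum_{k\in\Lambda_K}\|\psi_k f\|_{W_{p,q}^s}^p\Big)^{1/p},
\end{equation*}
with the obvious sup-modification when $p=\infty$, and since the sum is finite this is comparable (with constants depending only on $K$ and on $\#\Lambda_K$) to $\max_{k\in\Lambda_K}\|\psi_k f\|_{W_{p,q}^s}$, hence to $\sum_{k}\|\psi_k f\|_{W_{p,q}^s}$.

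Next I would treat a single localized piece $\psi_k f$. Its spatial support lies in a fixed ball $B(k,r)$, so by the definition of the short-time Fourier transform with a fixed compactly supported window $\phi$, the function $x\mapsto V_\phi(\psi_k f)(x,\xi)$ is supported in a fixed dilate $B(k,r')$; consequently the outer $L_{x,p}$ integration in the definition of $\|\psi_k f\|_{W_{p,q}^s}$ is over a set of measure $\sim_K 1$, and we get
\begin{equation*}
\|\psi_k f\|_{W_{p,q}^s}\sim_K \sup_{x\in B(k,r')}\|V_\phi(\psi_k f)(x,\xi)\|_{L_{\xi,q}^s}.
\end{equation*}
Now $V_\phi(\psi_k f)(x,\xi)=\mathscr{F}\big(\psi_k f\,\phi(\cdot-x)\big)(\xi)=\big(\mathscr{F}(\psi_k f)\ast\mathscr{F}\phi(\cdot-x)\,\big)(\xi)$ up to a harmless modulation factor. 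Using Young's inequality when $q\geqslant 1$, and Lemma~\ref{lemma, convolution for p<1} (applicable since all Fourier supports here are contained in fixed balls) when $q<1$, together with $\|\mathscr{F}(\phi(\cdot-x))\|_{L_{q\wedge1}^{|s|}}=\|\mathscr{F}\phi\|_{L_{q\wedge1}^{|s|}}\lesssim 1$ uniformly in $x$, gives $\|V_\phi(\psi_k f)(x,\xi)\|_{L_{\xi,q}^s}\lesssim\|\mathscr{F}(\psi_k f)\|_{L_{q,s}}$; and conversely, choosing a window $\phi$ which is identically $1$ on a neighborhood of $B(k,r)$, one recovers $\mathscr{F}(\psi_k f)$ exactly as $V_\phi(\psi_k f)(x_0,\cdot)$ for a suitable $x_0$, so $\|\mathscr{F}(\psi_k f)\|_{L_{q,s}}\lesssim\|\psi_k f\|_{W_{p,q}^s}$. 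Thus $\|\psi_k f\|_{W_{p,q}^s}\sim_K\|\mathscr{F}(\psi_k f)\|_{L_{q,s}}$.

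Finally I would pass from the pieces back to $f$. Since $\widehat f=\sum_{k\in\Lambda_K}\mathscr{F}(\psi_k f)$ is a finite sum and each $\psi_k$ equals $1$ near part of $K$ while $\sum_k\psi_k\equiv1$, the weighted $L_{q,s}$ norms of $\widehat f$ and of $\sum_k\mathscr{F}(\psi_k f)$ are comparable with constants depending only on $K$ and the partition; combining this with the two displays above yields $\|f\|_{W_{p,q}^s}\sim_K\|\widehat f\|_{L_{q,s}}$, and in particular $f\in W_{p,q}^s\iff\widehat f\in L_{q,s}$. The only point requiring genuine care — the ``main obstacle'' — is the low-exponent regime $q<1$ (and the interplay with $p<1$): there Young's inequality fails and one must invoke Lemma~\ref{lemma, convolution for p<1}, which is exactly why one needs the Fourier supports of all the convolved factors to sit in fixed balls; verifying that the relevant supports stay uniformly bounded (independent of the localization index $k$ and of the outer variable $x$, using that $\phi$ and $\psi$ have fixed compact support) is the step I would write out most carefully.
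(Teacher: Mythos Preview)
Your argument is essentially correct, but it takes a noticeably longer route than the paper's own proof and introduces an unnecessary layer. The paper dispenses with Proposition~\ref{lemma, Localization principle for Wiener amalgam space} entirely and argues directly: since $f$ already has compact support (say in $B(0,N)$), one simply picks the window $\phi$ to be smooth, compactly supported, and identically $1$ on $B(0,2N)$. Then for $x\in B(0,N)$ one has $V_\phi f(x,\xi)=\widehat f(\xi)$ exactly, which immediately gives $\|\widehat f\|_{L_q^s}\lesssim\|f\|_{W_{p,q}^s}$ after integrating over $B(0,N)$ in $x$. For the other direction, the compact supports of $f$ and $\phi$ confine the $x$-integration to $B(0,3N)$, and the pointwise bound $|V_\phi f(x,\xi)|\le(|\widehat f|\ast|\widehat{\bar\phi}|)(\xi)$ combined with Young's inequality (or Lemma~\ref{lemma, convolution for p<1} when $q<1$) finishes the job.

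Your detour through the partition $\{\psi_k\}$ works, but it forces you to prove the same two estimates for each localized piece $\psi_k f$ and then to reassemble, invoking the convolution estimate $\|\widehat{\psi_k}\ast\widehat f\|_{L_q^s}\lesssim\|\widehat f\|_{L_q^s}$ once more in the final step. Nothing is wrong, and you correctly identify the $q<1$ case as the only delicate point (Lemma~\ref{lemma, convolution for p<1} indeed applies because the Fourier transforms of $\mathscr{F}(\psi_k f)$ and $\mathscr{F}(\bar\phi(\cdot-x))$ are $(\psi_k f)(-\cdot)$ and $\bar\phi(-\cdot-x)$, both supported in balls of fixed radius, and the constant in that lemma depends only on the radius). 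But the localization buys you nothing here: $f$ is already compactly supported, so the ``collapse'' of the outer $L_{x,p}$ integral happens for $f$ itself without any partition. The paper's direct argument is shorter and cleaner; your version would be the natural one if the hypothesis were weaker (e.g., $f$ merely locally nice rather than compactly supported).
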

\begin{proof}
We only state the proof for $p<\infty$, the case $p=\infty$ is similar.
Without loss of generality, we assume $\text{supp}f\in B(0, N)$, where $N$ is the diameter of $K$.
Choose window function $\phi$ be a smooth function with compact support such that $\phi=1$ in $B(0,2N)$.
We have
\begin{equation}
  \begin{split}
    \int_{\mathbb{R}^n}\|\mathscr{F}(f(\cdot)\overline{\phi}(\cdot-x))(\xi)\|^p_{L_{\xi,q}^s}dx
    \gtrsim &
    \int_{B(0,N)}\|\mathscr{F}(f(\cdot)\overline{\phi}(\cdot-x))(\xi)\|^p_{L_{\xi,q}^s}dx
    \\
    \gtrsim &
    \int_{B(0,N)}\|\mathscr{F}(f(\cdot))(\xi)\|^p_{L_{\xi,q}^s}dx
    \\
    \gtrsim &
    \|\widehat{f}\|^p_{L_q^s},
  \end{split}
\end{equation}
which implies $\|\widehat{f}\|_{L_q^s}\lesssim \|f\|_{W_{p,q}^s}$.

On the other hand, by the support of $f$ and $\phi$, we have
\begin{equation}
  \int_{\mathbb{R}^n}\|\mathscr{F}(f(\cdot)\overline{\phi}(\cdot-x))(\xi)\|^p_{L_{\xi,q}^s}dx
  \lesssim
  \int_{B(0,3N)}\|\mathscr{F}(f(\cdot)\overline{\phi}(\cdot-x))(\xi)\|^p_{L_{\xi,q}^s}dx.
\end{equation}
Observing $|\mathscr{F}(f(\cdot)\overline{\phi}(\cdot-x))(\xi)|\leqslant (|\widehat{f}|\ast |\widehat{\overline{\phi}}|)(\xi)$,
we use Young's inequality or Lemma \ref{lemma, convolution for p<1} to deduce that
\begin{equation}
  \|(|\widehat{f}|\ast |\widehat{\overline{\phi}}|)(\xi)\|_{L_q^s}\lesssim \|\widehat{f}\|_{L_q^s}.
\end{equation}
Thus, we have
\begin{equation}
  \int_{\mathbb{R}^n}\|\mathscr{F}(f(\cdot)\overline{\phi}(\cdot-x))(\xi)\|^p_{L_{\xi,q}^s}dx
  \lesssim
  \int_{B(0,3N)}\|\widehat{f}\|_{L_q^s}^pdx
  \lesssim \|\widehat{f}\|^p_{L_q^s},
\end{equation}
which implies that $\|f\|_{W_{p,q}^s}\lesssim \|\widehat{f}\|_{L_q^s}.$
\end{proof}

By the localization technique, we give a mild characterization for the embedding between Triebel-Lizorkin and Wiener amalgam spaces.
\begin{proposition}[Mild characterization of the embedding between Triebel-Lizorkin and Wiener amalgam spaces]\label{proposition, mild characterization}
Let $0<p_1, q_1, q_2\leqslant \infty$, $0<p_2< \infty$, $s\in \mathbb{R}$. Take $r$ to be a fixed positive real number,
we have the following statements:
\begin{enumerate}
 \item
  $W_{p_1,q_1}^s\subset F_{p_2,q_2}$
  if and only if $1/p_2\leqslant 1/p_1$ and one of the following statements (a) and
  (b) holds:
    \begin{enumerate}
     \item
     $\|f\|_{F_{p_2,q_2}}\lesssim \|f\|_{W_{p_1,q_1}^s}$ for any $f\in \mathscr{S}'$ with support in $B(0, r)$,
     \item
     $\|f\|_{F_{p_2,q_2}}\lesssim \|\hat{f}\|_{L_{q_1}^s}$ for any $f\in \mathscr{S}'$ with support in $B(0, r)$.
   \end{enumerate}
 \item
  $F_{p_2,q_2}\subset W_{p_1,q_1}^s$
  if and only if $1/p_1\leqslant 1/p_2$ and one of the following statements (a) and
  (b) holds:
    \begin{enumerate}
     \item
     $\|f\|_{W_{p_1,q_1}^s}\lesssim \|f\|_{F_{p_2,q_2}}$ for any $f\in \mathscr{S}'$ with support in $B(0, r)$,
     \item
     $\|\hat{f}\|_{L_{q_1}^s}\lesssim\|f\|_{F_{p_2,q_2}}$ for any $f\in \mathscr{S}'$ with support in $B(0, r)$.
   \end{enumerate}
\end{enumerate}
\end{proposition}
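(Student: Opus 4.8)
The plan is to reduce each of the two global inclusions to its localized version, exploiting the two localization principles: Lemma~\ref{lemma, Localization principle for Triebel-Lizorkin space} for $F_{p_2,q_2}$ and Proposition~\ref{lemma, Localization principle for Wiener amalgam space} for $W_{p_1,q_1}^s$. Both $F_{p_2,q_2}$ and $W_{p_1,q_1}^s$ are translation invariant, and, for $f$ supported in a fixed ball, $\|f\|_{W_{p_1,q_1}^s}\sim\|\widehat f\|_{L_{q_1}^s}$ by Lemma~\ref{lemma, equivalent norm, compact support}; the latter shows at once that statements (a) and (b) are equivalent in each of (1) and (2), so it suffices to deal with (a). A routine finite-covering argument — using that multiplication by a fixed smooth, compactly supported function is bounded on $W_{p_1,q_1}^s$ and on $F_{p_2,q_2}$ — shows that the validity of (a) does not depend on the choice of $r>0$, so I will take $r$ equal to the radius of a ball containing the support of the bump $\psi$ used in the partitions of unity, and use one and the same partition $\{\psi_k\}_{k\in\mathbb{Z}^n}$ in both localization principles.

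For the necessity part, suppose $W_{p_1,q_1}^s\subset F_{p_2,q_2}$. Since both are quasi-Banach spaces continuously embedded in $\mathscr{S}'$, the inclusion map is continuous (closed graph theorem), so $\|f\|_{F_{p_2,q_2}}\lesssim\|f\|_{W_{p_1,q_1}^s}$ for every $f$; restricting to $f$ supported in $B(0,r)$ yields (a). To obtain $1/p_2\leqslant 1/p_1$, I would test on $f_\Lambda=\sum_{j\in\Lambda}g(\cdot-\lambda j)$, where $g\in\mathscr{S}\setminus\{0\}$ has very small support, $\lambda\in\mathbb{N}$ is large enough that each localized piece $\psi_k f_\Lambda$ involves at most one translate $g(\cdot-\lambda j)$, and $\Lambda\subset\mathbb{Z}^n$ is an arbitrary finite set. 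The two localization principles then give $\|f_\Lambda\|_{W_{p_1,q_1}^s}\sim(\#\Lambda)^{1/p_1}\|g\|_{W_{p_1,q_1}^s}$ and $\|f_\Lambda\|_{F_{p_2,q_2}}\sim(\#\Lambda)^{1/p_2}\|g\|_{F_{p_2,q_2}}$, so letting $\#\Lambda\to\infty$ in $\|f_\Lambda\|_{F_{p_2,q_2}}\lesssim\|f_\Lambda\|_{W_{p_1,q_1}^s}$ forces $1/p_2\leqslant 1/p_1$. Part (2) is entirely symmetric, with the two spaces interchanged, and produces $1/p_1\leqslant 1/p_2$.

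For the sufficiency part of (1), assume $1/p_2\leqslant 1/p_1$ (that is, $p_1\leqslant p_2$) together with (a). For $f\in W_{p_1,q_1}^s$ write $f=\sum_k\psi_k f$, where $\psi_k f$ is supported in $B(k,r)$; translation invariance turns (a) into $\|\psi_k f\|_{F_{p_2,q_2}}\lesssim\|\psi_k f\|_{W_{p_1,q_1}^s}$ with a constant independent of $k$. Applying successively the $F$-localization principle, this uniform estimate, the inclusion $\ell^{p_1}\hookrightarrow\ell^{p_2}$, and the $W$-localization principle gives
\begin{align*}
\|f\|_{F_{p_2,q_2}}
&\sim\Big(\sum_k\|\psi_k f\|_{F_{p_2,q_2}}^{p_2}\Big)^{1/p_2}
\lesssim\Big(\sum_k\|\psi_k f\|_{W_{p_1,q_1}^s}^{p_2}\Big)^{1/p_2}
\\
&\leqslant\Big(\sum_k\|\psi_k f\|_{W_{p_1,q_1}^s}^{p_1}\Big)^{1/p_1}
\sim\|f\|_{W_{p_1,q_1}^s},
\end{align*}
which is the desired inclusion. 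For part (2), under $1/p_1\leqslant 1/p_2$ one runs the same chain in reverse order: start from the $W$-localization principle, apply (a) to each $\psi_k f$, use $\ell^{p_2}\hookrightarrow\ell^{p_1}$, and finish with the $F$-localization principle (with the usual modification, a supremum in place of the $\ell^{p_1}$-sum, when $p_1=\infty$).

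The step I expect to be the main obstacle is the necessity of the dimensional conditions: one has to check carefully that, for a sufficiently sparse $\Lambda$ and a bump $g$ of small enough support, the localized pieces $\psi_k f_\Lambda$ genuinely decouple — up to a bounded, dimension-dependent number of harmless overlaps around each point $\lambda j$ — so that both quasi-norms of $f_\Lambda$ grow exactly like $(\#\Lambda)^{1/p_i}$. Everything else is straightforward bookkeeping built on the two localization principles and the elementary inclusion $\ell^{a}\hookrightarrow\ell^{b}$ for $a\leqslant b$.
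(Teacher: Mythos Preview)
Your proposal is correct and shares the paper's overall architecture: localization principles on both sides together with the elementary inclusion $\ell^{p_1}\hookrightarrow\ell^{p_2}$ for sufficiency, and Lemma~\ref{lemma, equivalent norm, compact support} for the equivalence of (a) and (b). Two sub-arguments differ. For the necessity of $1/p_2\leqslant 1/p_1$ the paper uses a \emph{dilation} test rather than your translation/spreading test: with $h\in\mathscr{S}$ of small Fourier support and $\widehat{h_\epsilon}(\xi)=\widehat h(\xi/\epsilon)$ one has $\|h_\epsilon\|_{F_{p_2,q_2}}\sim\epsilon^{n(1-1/p_2)}$ and $\|h_\epsilon\|_{W_{p_1,q_1}^s}\sim\epsilon^{n(1-1/p_1)}$, so $\epsilon\to0$ forces the inequality and sidesteps the decoupling bookkeeping you flagged as the main obstacle. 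Likewise, for the independence of $r$ the paper again argues by dilation (prove it for $r=2\sqrt n$, then rescale), whereas you use a finite-covering argument via smooth multipliers; both are standard. One small point worth noting: in your sufficiency chain you apply the $F$-localization equivalence to $f$ directly, which presupposes $f\in F_{p_2,q_2}$; the paper instead shows the partial sums $f_m=\sum_{|k|\leqslant m}\psi_k f$ are Cauchy in $F_{p_2,q_2}$ and converge to $f$ in $\mathscr{S}'$, which is cleaner if one only has the norm equivalence stated for elements already in the space.
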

\begin{proof}
Since the symmetry between statements (1) and (2), we only give the proof for the statement (1).
Obviously, $W_{p_1,q_1}^s\subset F_{p_2,q_2}$ implies (a), and then implies (b) by Lemma \ref{lemma, equivalent norm, compact support}.
Take $h$ to be a smooth function with small Fourier support near the origin, such that $\Delta_0h=h$.
Denote $\widehat{h_{\epsilon}}(\xi)=\widehat{h}(\xi/\epsilon)$. We obtain
\begin{equation}
  \|h_{\epsilon}\|_{F_{p_2,q_2}}\sim \|h_{\epsilon}\|_{L_{p_2}}\sim \epsilon^{n(1-1/p_2)}
\end{equation}
for $\epsilon \in (0,1)$.
On the other hand, by Lemma \ref{lemma, embedding between modulation and wiener}, we have
\begin{equation}
  \|h_{\epsilon}\|_{M_{p_1,\min\{p_1,q_1\}}^{s}}\lesssim \|h_{\epsilon}\|_{W_{p_1,q_1}^s}\lesssim \|h_{\epsilon}\|_{M_{p_1,\max\{p_1,q_1\}}^{s}}.
\end{equation}
Moreover, we have $\|h_{\epsilon}\|_{M_{p_1,\min\{p_1,q_1\}}^{s}}\sim \|h_{\epsilon}\|_{L_{p_1}}$ and $\|h_{\epsilon}\|_{M_{p_1,\max\{p_1,q_1\}}^{s_2}}\sim \|h_{\epsilon}\|_{L_{p_1}}$
by the local property of modulation spaces (see Lemma 2.3 in \cite{Guo_Characterization}, for instance).
Thus, we have
\begin{equation}
  \|h_{\epsilon}\|_{W_{p_1,q_1}^s}\sim \|h_{\epsilon}\|_{L_{p_1}}\sim \epsilon^{n(1-1/p_1)}
\end{equation}
for $\epsilon \in (0,1)$.
So $W_{p_1,q_1}^s\subset F_{p_2,q_2}$ implies $\epsilon^{n(1-1/p_2)} \lesssim \epsilon^{n(1-1/p_1)}$ for $\epsilon \in (0,1)$.
Letting $\epsilon \rightarrow 0$, we obtain $1/p_2\leqslant 1/p_1$.

Next, we only need to verify that $1/p_2\leqslant 1/p_1$ and (a) implies $W_{p_1,q_1}^s\subset F_{p_2,q_2}$.
We first verify the conclusion for $r=2\sqrt{n}$, then the general $r$ will be verified by a dilation argument.
Let $\psi$ be smooth function with compact support included in $B(0, 2\sqrt{n})$ such that
$\sum_{k\in \mathbb{Z}^n}\psi_k(x)=1$,
where $\psi_k(x)=\psi(x-k)$.
For $f\in W_{p_1,q_1}^s$, we have $\|f\|_{W_{p_1,q_1}^s}\sim \left(\sum_{k\in \mathbb{Z}^n}\|\psi_kf\|^{p_1}_{W_{p_1,q_1}^s}\right)^{1/{p_1}}$ and $\psi_kf\in W_{p_1,q_1}^s$.
Thus, for every $k\in \mathbb{Z}^n$, $T_{-k}(\psi_kf)$ is an element of $W_{p_1,q_1}^{s}$ with compact support in $B(0, 2\sqrt{n})$.
By the assumption (a), we have that $T_{-k}(\psi_kf)$ is also a element of $F_{p_2,q_2}$ satisfying $\|T_{-k}(\psi_kf)\|_{F_{p_2,q_2}}\lesssim \|T_{-k}(\psi_kf)\|_{W_{p_1,q_1}^s}$.
By the translation invariant of norms $\|\cdot\|_{F_{p_2,q_2}}$ and $\|\cdot\|_{W_{p_1,q_1}^s}$, we deduce that
\begin{equation}
\|\psi_kf\|_{F_{p_2,q_2}}\lesssim \|\psi_kf\|_{W_{p_1,q_1}^s}.
\end{equation}
Taking $l^{p_1}$ on both sides of the above inequality and use the fact that $l_{p_1}\subset l_{p_2}$ for $1/p_2\leqslant 1/p_1$, we have
\begin{equation}
\left(\sum_{k\in \mathbb{Z}^n}\|\psi_kf\|^{p_2}_{F_{p_2,q_2}}\right)^{1/p_2}\lesssim \left(\sum_{k\in \mathbb{Z}^n}\|\psi_kf\|^{p_1}_{W_{p_1,q_1}^s}\right)^{1/p_1}\sim \|f\|_{W_{p_1,q_1}^s}.
\end{equation}
Next, we invoke a limiting argument to verify $f\in F_{p_2,q_2}$. Denote $f_m=\sum_{|k|\leqslant m}f\psi_k$, then $f_m\in h_p$ and
\begin{equation}
  \|f_m-f_l\|_{F_{p_2,q_2}}\lesssim \left(\sum_{l<|k|\leqslant m}\|\psi_kf\|^{p_2}_{F_{p_2,q_2}}\right)^{1/p_2} \rightarrow 0
\end{equation}
as $m>l\rightarrow \infty$.
We have that
$\{f_m\}_{m=1}^{\infty}$ forms a Cauchy sequence in $F_{p_2,q_2}$.
We also have $f_m\rightarrow f$ in $\mathscr{S}'$. Thus, $f\in F_{p_2,q_2}$, and
\begin{equation}
  \begin{split}
    \|f\|_{F_{p_2,q_2}}
    =&
    \lim_{m\rightarrow \infty}\|f_m\|_{F_{p_2,q_2}}
    \\
    \lesssim &
    \lim_{m\rightarrow \infty}\left(\sum_{|k|\leqslant m}\|\psi_kf\|^{p_2}_{F_{p_2,q_2}}\right)^{1/p_2}
    \\
    \lesssim &
    \lim_{m\rightarrow \infty}\left(\sum_{|k|\leqslant m}\|\psi_kf\|^{p_1}_{W_{p_1,q_1}^s}\right)^{1/p_1}\sim \|f\|_{W_{p_1,q_1}^s}.
  \end{split}
\end{equation}

Now, we finish this proof for $r=2\sqrt{n}$ and turn to the proof for general $r$.
Assume $\|g\|_{F_{p_2,q_2}}\lesssim \|g\|_{W_{p_1,q_1}^s}$ for any $g\in \mathscr{S}'$ supported in $B(0, r)$ with $r<2\sqrt{n}$.
For a fixed $f$ supported in $B(0,2\sqrt{n})$,
we denote $f_r(x)=f(2\sqrt{n}x/r)$. Obviously, $f_r$ has support in $B(0,r)$. Thus, we have
\begin{equation}
  \|f_r\|_{F_{p_2,q_2}}\lesssim \|f_r\|_{W_{p_1,q_1}^s}
\end{equation}
by the assumption.
By the dilation properties of Wiener amalgam and Triebel-Lizorkin spaces, we deduce
\begin{equation}
  \|f\|_{F_{p_2,q_2}}\lesssim_r \|f_r\|_{F_{p_2,q_2}}\lesssim \|f_r\|_{W_{p_1,q_1}^s} \lesssim_r \|f\|_{W_{p_1,q_1}^s}
\end{equation}
for all $f$ supported in $B(0,2\sqrt{n})$, which implies the embedding $W_{p_1,q_1}^s\subset F_{p_2,q_2}$
by the previous proof for $r=2\sqrt{n}$.
\end{proof}
%写这个极限过程的考虑在于引理中是范数等价定理而不是空间等价定理，如果写成空间等价定理就不用这个极限过程了

The following corollary is a direct conclusion of Proposition \ref{proposition, mild characterization} with a slight modification for the cases of $L_1$ and $L_{\infty}$.

\begin{corollary}[Mild characterization of the embedding between local Hardy and Wiener amalgam spaces]\label{corollary, mild characterization}
Let $0<p<\infty$, $0<q\leqslant \infty$, $s\in \mathbb{R}$.
We have
\begin{enumerate}
 \item
  $W_{p,q}^s\subset h_p$
  if and only if one of the following statements holds:
    \begin{enumerate}
     \item
     $\|f\|_{h_p}\lesssim \|f\|_{W_{p,q}^s}$ for any $f\in \mathscr{S}'$ with support in $B(0, r)$,
     \item
     $\|f\|_{h_p}\lesssim \|\hat{f}\|_{L_{q,s}}$ for any $f\in \mathscr{S}'$ with support in $B(0, r)$.
   \end{enumerate}
 \item
  $h_p\subset W_{p,q}^s$
  if and only if one of the following statements holds:
    \begin{enumerate}
     \item
     $\|f\|_{W_{p,q}^s}\lesssim \|f\|_{h_p}$ for any $f\in \mathscr{S}'$ with support in $B(0, r)$,
     \item
     $\|\hat{f}\|_{L_{q,s}}\lesssim\|f\|_{h_p}$ for any $f\in \mathscr{S}'$ with support in $B(0, r)$.
   \end{enumerate}
\end{enumerate}
Moreover, the above statement is true if $p=1$ and $h_1$ is replaced by $L_{1}$,
it is also true if $p=\infty$ and $h_p$ is replaced by $L_{\infty}$.
\end{corollary}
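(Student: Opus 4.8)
The plan is to deduce the corollary directly from Proposition~\ref{proposition, mild characterization} on the range $0<p<\infty$, and to cover the endpoint cases $p=1$ and $p=\infty$ (where $L_1$ and $L_\infty$ are not Triebel--Lizorkin spaces) by transcribing the proof of that proposition with one elementary substitution.

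For $0<p<\infty$ we use that $h_p$ coincides with $F_{p,2}$ with equivalent quasi-norms. Applying Proposition~\ref{proposition, mild characterization} with $p_1=p_2=p$, $q_1=q$ and $q_2=2$, the side condition $1/p_2\leqslant 1/p_1$ in item (1) (respectively $1/p_1\leqslant 1/p_2$ in item (2)) holds automatically because $p_1=p_2$; hence $W_{p,q}^s\subset h_p$ is equivalent to statement (a), and (a) is equivalent to (b) via Lemma~\ref{lemma, equivalent norm, compact support}, which identifies $\|f\|_{W_{p,q}^s}$ with $\|\widehat f\|_{L_{q,s}}$ for $f$ of compact support. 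The same reasoning gives item (2). This settles the range $0<p<\infty$.

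For $p\in\{1,\infty\}$ with $L_p$ in place of $h_p$, we repeat the proof of Proposition~\ref{proposition, mild characterization} verbatim, the only change being that the Triebel--Lizorkin localization principle (Lemma~\ref{lemma, Localization principle for Triebel-Lizorkin space}) is replaced by the elementary equivalence
\[
\|f\|_{L_p}\sim\left(\sum_{k\in\mathbb{Z}^n}\|\psi_kf\|_{L_p}^p\right)^{1/p}\quad(0<p<\infty),\qquad
\|f\|_{L_\infty}\sim\sup_{k\in\mathbb{Z}^n}\|\psi_kf\|_{L_\infty},
\]
which is immediate from the bounded overlap of the supports of a uniform partition of unity $\{\psi_k\}$. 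With this substitute, the ``only if'' direction is trivial ($W_{p,q}^s\subset L_p$ yields (a) by restriction to compactly supported data, and (a)$\Leftrightarrow$(b) by Lemma~\ref{lemma, equivalent norm, compact support}); for ``if'' one decomposes $f=\sum_k\psi_kf$, translates each $\psi_kf$ into the fixed ball $B(0,2\sqrt n)$, applies hypothesis (a) together with the translation invariance of $\|\cdot\|_{L_p}$ and $\|\cdot\|_{W_{p,q}^s}$, and reassembles using the displayed equivalence and the localization principle for Wiener amalgam spaces (Proposition~\ref{lemma, Localization principle for Wiener amalgam space}); the reduction from $r=2\sqrt n$ to a general radius $r$ is exactly the dilation argument used in Proposition~\ref{proposition, mild characterization}.

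I do not expect any real obstacle here: the corollary is, as the statement suggests, essentially a transcription of Proposition~\ref{proposition, mild characterization}. The only points deserving attention are that all auxiliary tools invoked in that proposition --- the Wiener amalgam localization principle, the identity $\|f\|_{W_{p,q}^s}\sim\|\widehat f\|_{L_{q,s}}$ for compactly supported $f$, and the dilation behaviour of the spaces --- remain valid at $p=1$ and $p=\infty$, and that for $p=\infty$ the reassembly step is carried out with a supremum in place of an $\ell^p$-sum, so that the conclusion $f\in L_\infty$ (respectively $f\in W_{\infty,q}^s$) follows from the bounded-overlap estimate directly, with no limiting argument needed.
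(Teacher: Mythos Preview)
Your proposal is correct and matches the paper's approach exactly: the paper merely states that the corollary is ``a direct conclusion of Proposition~\ref{proposition, mild characterization} with a slight modification for the cases of $L_1$ and $L_{\infty}$,'' and your write-up supplies precisely those details --- specializing to $p_1=p_2=p$, $q_2=2$ for the $h_p=F_{p,2}$ case, and replacing the Triebel--Lizorkin localization by the elementary $L_p$ localization for the endpoint cases.
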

As an application, we prove a key embedding from $W_{p,2}$ into $h_p$.

\begin{proposition}\label{proposition, Littlewood-paley type inequality}
Let $p\in (0, 2]$. Then
\begin{equation}
W_{p,2}\subset h_p.
\end{equation}
\end{proposition}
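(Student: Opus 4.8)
The plan is to invoke Corollary~\ref{corollary, mild characterization}, which reduces the embedding $W_{p,2}\subset h_p$ to verifying the local estimate
\begin{equation}
\|f\|_{h_p}\lesssim \|\widehat{f}\|_{L_{2}}
\end{equation}
for all $f\in\mathscr{S}'$ with support in a fixed ball $B(0,r)$. Since $h_p=F_{p,2}$, and for $p\le 2$ one expects $h_p$ to be controlled by $L_2$ on functions with compact support, this is morally a Plancherel-type bound; the content is that compact support lets us pay nothing for the (quasi-)norm exchange. So the core task is: \emph{for $0<p\le 2$ and $f$ supported in $B(0,r)$, one has $\|f\|_{h_p}\lesssim\|f\|_{L_2}$.}

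First I would dispose of the range $1\le p\le 2$ directly: on $B(0,r)$, Hölder's inequality gives $\|f\|_{L_p}\lesssim_r\|f\|_{L_2}$, and since $h_p$ coincides with $L_p$ for $1<p\le 2$ (with $h_1\subset L_1$ replaced appropriately, using the $L_1$-version of Corollary~\ref{corollary, mild characterization} at the endpoint), this case is immediate. For the genuinely hard range $0<p<1$ I would use the atomic structure of $h_p$: it suffices to realize $f$ as a single big $h_p$-atom up to a constant. Indeed, if $f$ is supported in $B(0,r)$ and $\|f\|_{L_\infty}<\infty$, then $f/(C_r\|f\|_{L_\infty})$ is a big $h_p$-atom for a cube $Q\supset B(0,r)$ with $|Q|\sim r^n\ge 1$, whence $\|f\|_{h_p}\lesssim_r\|f\|_{L_\infty}$. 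To pass from $\|f\|_{L_\infty}$ to $\|f\|_{L_2}=\|\widehat{f}\|_{L_2}$, note that $f$ has compact support, so $\widehat{f}$ is band-unlimited but $f$ itself can be frequency-localized after convolving with a fixed bump: write $f=\phi\ast f$ is not available, but instead use that for $f$ supported in $B(0,r)$ we have $\|f\|_{L_\infty}\lesssim\|f\|_{L_2}$ fails in general — so this naive route needs the Nikolskii-type inequality, which requires a frequency bound. The correct fix is to handle a dense class and use a Bernstein/Nikolskii inequality on the Littlewood--Paley pieces.

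Thus the step I expect to be the main obstacle is precisely the estimate $\|f\|_{h_p}\lesssim\|\widehat{f}\|_{L_2}$ for $f$ supported in $B(0,r)$ and $0<p<1$, where one cannot simply treat $f$ as an atom because atoms require an $L_\infty$ (hence frequency) bound that $\|\widehat{f}\|_{L_2}$ does not supply. My plan to overcome it: expand $f=\sum_{j\ge 0}\Delta_j f$, and for each $j$ use that $\Delta_j f$ has Fourier support in $\{|\xi|\lesssim 2^j\}$ together with the Nikolskii inequality $\|\Delta_j f\|_{L_\infty}\lesssim 2^{jn/2}\|\Delta_j f\|_{L_2}$; since each $\Delta_j f$ is (essentially) supported near $B(0,r)$ up to Schwartz tails, a suitably normalized piece of $\Delta_j f$ is a big $h_p$-atom, giving $\|\Delta_j f\|_{h_p}\lesssim_r 2^{jn(1/p-1/2)\cdot 0}$... — this still diverges, so instead I would square-sum: by the Littlewood--Paley characterization $h_p=F_{p,2}$ and the embedding $F_{p,2}\hookrightarrow F_{2,2}=L_2$ being the wrong direction, one should rather use $\|f\|_{F_{p,2}}\lesssim\|f\|_{F_{2,2}}$ is false for $p<2$ on all of $\mathbb{R}^n$ but \emph{true} on a fixed ball, which is exactly the content of Lemma~\ref{lemma, Localization principle for Triebel-Lizorkin space} combined with $\ell_p\supset\ell_2$ reversed. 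Concretely: by the localization principle, $\|f\|_{F_{p,2}}\sim(\sum_k\|\psi_k f\|_{F_{p,2}}^p)^{1/p}$, and since $f$ is supported in $B(0,r)$ only $O_r(1)$ terms survive, so $\|f\|_{F_{p,2}}\lesssim_r\max_k\|\psi_k f\|_{F_{p,2}}$; each $\psi_k f$ has compact support, so by Hölder in the $\ell_2$-valued $L_p$ space (using $|B(0,r)|<\infty$) we get $\|\psi_k f\|_{F_{p,2}}\lesssim_r\|\psi_k f\|_{F_{2,2}}=\|\psi_k f\|_{L_2}\lesssim\|f\|_{L_2}=\|\widehat f\|_{L_2}$. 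This closes the estimate, and with Corollary~\ref{corollary, mild characterization}(1)(b) the embedding $W_{p,2}\subset h_p$ follows.
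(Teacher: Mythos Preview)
Your reduction via Corollary~\ref{corollary, mild characterization} is correct and matches the paper exactly: the task is to show $\|f\|_{h_p}\lesssim\|\widehat f\|_{L_2}=\|f\|_{L_2}$ for $f$ supported in a fixed ball. However, the final step of your argument contains a genuine gap.

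You write that since $\psi_k f$ has compact support, ``by H\"older in the $\ell_2$-valued $L_p$ space (using $|B(0,r)|<\infty$)'' one obtains $\|\psi_k f\|_{F_{p,2}}\lesssim_r\|\psi_k f\|_{F_{2,2}}$. This inference fails: compact support of $g:=\psi_k f$ does \emph{not} imply compact support of the Littlewood--Paley square function $S g(x)=\bigl(\sum_{j\ge 0}|\Delta_j g(x)|^2\bigr)^{1/2}$, because each $\Delta_j$ is a convolution operator that spreads support over all of $\mathbb{R}^n$. Thus H\"older's inequality on a bounded domain cannot be applied to $\|Sg\|_{L_p}$. (Note also that invoking the localization principle here is redundant: $f$ was already compactly supported, so passing to $\psi_k f$ buys nothing --- you have reduced the estimate to an identical one.)

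The paper's proof addresses precisely this issue by splitting $\|Sf\|_{L_p}$ into a near part on $B(0,10\sqrt n)$ and a far part on its complement. On the near part H\"older gives $\|Sf\,\chi_{B}\|_{L_p}\lesssim\|Sf\,\chi_{B}\|_{L_2}\lesssim\|f\|_{L_2}$, which is your intended move. On the far part one cannot use H\"older; instead the paper exploits that for $x$ far from $\operatorname{supp} f$ and $y\in\operatorname{supp} f$ one has $|x-y|\gtrsim|x|$, so the rapid decay of $\mathscr{F}^{-1}\psi$ yields the pointwise bound $|\Delta_j f(x)|\lesssim 2^{j(n-\mathscr{L})}\langle x\rangle^{-\mathscr{L}}\|f\|_{L_2}$, whence $Sf(x)\lesssim\langle x\rangle^{-\mathscr{L}}\|f\|_{L_2}$ on $B^c$ and the tail is integrable in $L_p$. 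Your proposal is missing exactly this tail estimate; once you insert it, the argument is complete and coincides with the paper's.
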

\begin{proof}
By Corollary \ref{corollary, mild characterization}, we only need to verify
\begin{equation}
  \|f\|_{h_p}\lesssim \|f\|_{L_2}
\end{equation}
for all $f$ with compact support in $B(0, 2\sqrt{n})$. Using the Littlewood-paley characterization of $h_p$, we have
\begin{equation}
  \begin{split}
    \|f\|_{h_p}
    \sim &
    \left\|\left(\sum_{j=0}^{\infty}|\Delta_jf|^2\right)^{1/2}\right\|_{L_p}
    \\
    \lesssim &
    \left\|\left(\sum_{j=0}^{\infty}|\Delta_jf|^2\right)^{1/2}\chi_{B(0, 10\sqrt{n})}\right\|_{L_p}+\left\|\left(\sum_{j=0}^{\infty}|\Delta_jf|^2\right)^{1/2}\chi_{B^c(0, 10\sqrt{n})}\right\|_{L_p}
    \\
    :=&I+II.
  \end{split}
\end{equation}
For I, we use H\"{o}lder's inequality to conclude
  \begin{equation}
  \left\|\left(\sum_{j=0}^{\infty}|\Delta_jf|^2\right)^{1/2}\chi_{B(0, 10\sqrt{n})}\right\|_{L_p}
  \lesssim
  \left\|\left(\sum_{j=0}^{\infty}|\Delta_jf|^2\right)^{1/2}\chi_{B(0, 10\sqrt{n})}\right\|_{L_2}
  \lesssim
  \|f\|_{L_2}.
  \end{equation}
On the other hand,
for $x\in B^c(0, 10\sqrt{n})$, $y\in B(0, 2\sqrt{n})$,  $j\geqslant 1$, we use the fact $|x-y|\geqslant |x|/2$ and the rapid decay of $\psi$ to deduce that
\begin{equation}
  \begin{split}
    \Delta_jf
    =&
    \int_{B(0, 2\sqrt{n})}(\mathscr{F}^{-1}\psi_j)(x-y)f(y)dy
    \\
    =&
    2^{jn}\int_{B(0, 2\sqrt{n})}(\mathscr{F}^{-1}\psi)(2^j(x-y))f(y)dy
    \\
    \lesssim &
    2^{jn}\int_{B(0, 2\sqrt{n})}\langle 2^j(x-y)\rangle^{-\mathscr{L}}f(y)dy
    \\
    \lesssim &
    2^{jn}\langle 2^jx\rangle^{-\mathscr{L}}\int_{B(0, 2\sqrt{n})}f(y)dy
    \lesssim
    2^{j(n-\mathscr{L})}\langle x\rangle^{-\mathscr{L}}\|f\|_{L_2}.
  \end{split}
\end{equation}
We can also verify $\Delta_0f\lesssim \langle x\rangle^{-\mathscr{L}}\|f\|_{L_2}$ for $x\in B^c(0, 10\sqrt{n})$. Thus, we have
\begin{equation}
  \begin{split}
    \left(\sum_{j=0}^{\infty}|\Delta_jf|^2\right)^{1/2}
    \lesssim &
    \left(\sum_{j=0}^{\infty}2^{2j(n-\mathscr{L})}\right)^{1/2}\langle x\rangle^{-\mathscr{L}}\|f\|_{L_2}
    \\
    \lesssim &
    \langle x\rangle^{-\mathscr{L}}\|f\|_{L_2}
  \end{split}
\end{equation}
for $x\in B^c(0, 10\sqrt{n})$.
Thus, we have the following estimate for term II:
\begin{equation}
  \begin{split}
    \left\|\left(\sum_{j=0}^{\infty}|\Delta_jf|^2\right)^{1/2}\chi_{B^c(0, 10\sqrt{n})}\right\|_{L_p}
    \lesssim &
    \|\langle x\rangle^{-\mathscr{L}}\chi_{B^c(0,10\sqrt{n})}\|_{L_p}\cdot\|f\|_{L_2}
    \\
    \lesssim &
    \|f\|_{L_2}.
  \end{split}
\end{equation}
Combining with the estimates of terms I and II, we complete this proof.
\end{proof}
We remark that if $p\in [1,2]$, the inclusion relation $W_{p,2}\subset L_p$  has been concluded in \cite{Cunanan_embedding_Lp_wiener}.
However, the method in \cite{Cunanan_embedding_Lp_wiener} does not apply to $h_p$ for $p\in (0,1]$.

By a further discretization, the embedding relations between Wiener amalgam and Lebesgue spaces can be characterized by inequalities associated with Fourier series.
\begin{proposition}[Characterization of the embedding between Lebesgue and Wiener amalgam spaces by Fourier series]\label{proposition, fourier series}
Let $1\leqslant p\leqslant \infty$, $0<q\leqslant \infty$, $s\in \mathbb{R}$.
We have the following statements:
\begin{enumerate}
 \item
  $W_{p,q}^s\subset L_p$
  if and only if
  \begin{equation}
    \|\sum_{k\in \mathbb{Z}^n}a_ke^{2\pi ikx}\|_{L_p(\mathbb{T}^n)}\lesssim \|\{a_k\}_{k\in \mathbb{Z}^n}\|_{l_q^{s,0}}
  \end{equation}
  for any truncated sequence $\vec{a}=\{a_k\}_{k\in \mathbb{Z}^n}$.
 \item
  $L_p\subset W_{p,q}^s$
  if and only if
  \begin{equation}
    \|\{a_k\}_{k\in \mathbb{Z}^n}\|_{l_q^{s,0}}\lesssim\|\sum_{k\in \mathbb{Z}^n}a_ke^{2\pi ikx}\|_{L_p(\mathbb{T}^n)}
  \end{equation}
  for any truncated sequence $\vec{a}=\{a_k\}_{k\in \mathbb{Z}^n}$.
\end{enumerate}
\end{proposition}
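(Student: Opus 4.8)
The plan is to exploit the localization principle (Proposition \ref{lemma, Localization principle for Wiener amalgam space}) together with the equivalence $\|f\|_{W_{p,q}^s}\sim_K\|\widehat f\|_{L_{q,s}}$ for compactly supported $f$ (Lemma \ref{lemma, equivalent norm, compact support}), so that the embedding $W_{p,q}^s\subset L_p$ reduces to the inequality $\|f\|_{L_p}\lesssim\|\widehat f\|_{L_q^s}$ for $f$ supported in a fixed ball $B(0,r)$. Once this reduction is in place (this is exactly the content of Proposition \ref{proposition, mild characterization} with $p_1=p_2=p$, $q_1=q$, $q_2=2$, and $F_{p,2}$ replaced by $L_p$; for $p=1,\infty$ one uses the modified statement in Corollary \ref{corollary, mild characterization}), the task is to transfer the \emph{continuous} inequality on $f$ supported in a ball to the \emph{periodic} inequality on trigonometric polynomials $\sum_{k}a_ke^{2\pi ikx}$ on $\mathbb T^n$. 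The two directions of the equivalence are handled symmetrically, so I would write both in parallel.

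The main mechanism is periodization versus sampling. For the direction ``Fourier-series inequality $\Rightarrow$ $W_{p,q}^s\subset L_p$'': given $f$ supported in a small cube $Q\subset B(0,r)$, pick a window $\eta\in\mathscr S$ with $\widehat\eta$ compactly supported, $\widehat\eta\equiv1$ on a neighbourhood of $\mathrm{supp}\,\widehat{f\eta}$... more cleanly, since $f$ is supported in a ball of radius $r<\tfrac12$ (rescale the torus if necessary so that this holds after a fixed dilation), write $\widehat f$ on $\mathbb R^n$ as a smooth function, discretize it by setting $a_k=\widehat f(k)\,\omega(k)$ against a suitable smooth sampling/partition function, and use that $f(x)=\sum_{k}a_ke^{2\pi ikx}$ on the support of $f$ up to harmless smooth cut-offs, because the Fourier coefficients of the periodization of $f$ are precisely the samples $\widehat f(k)$ (Poisson summation), and $f$ being supported in a fundamental domain makes the periodization equal to $f$ there. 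Then $\|f\|_{L_p}=\|f\|_{L_p(Q)}\lesssim\|f\|_{L_p(\mathbb T^n)}=\|\sum_k a_ke^{2\pi ikx}\|_{L_p(\mathbb T^n)}$, while $\|\{a_k\}\|_{l_q^{s,0}}=\|\{\widehat f(k)\}\|_{l_q^{s,0}}\lesssim\|\widehat f\|_{L_q^s}$ by a standard Plancherel--Pólya / Nikol'skii sampling inequality for functions with Fourier support in a ball (the relevant entire-function sampling estimate; here $\langle k\rangle\sim\langle\xi\rangle$ for $\xi$ near $k$ makes the weights match). Combined with Lemma \ref{lemma, equivalent norm, compact support} this gives $\|f\|_{L_p}\lesssim\|\widehat f\|_{L_q^s}\sim\|f\|_{W_{p,q}^s}$, hence the embedding by the mild characterization.

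For the converse direction, given a truncated sequence $\vec a$, I build the test function $f_{\vec a}(x)=\sum_k a_k\,\widehat\psi(x-k)$ for a fixed $\psi\in\mathscr S$ with $\psi$ supported near the origin and $\psi\equiv1$ near $0$; then $\widehat{f_{\vec a}}(\xi)=\sum_k a_k\psi(\xi)e^{-2\pi ik\xi}$ is supported in a fixed ball, its $L_q^s$-norm is $\sim\|\{a_k\}\|_{l_q^{s,0}}$ (one inequality is clear, the reverse uses that the smooth bumps $\psi_k$ overlap boundedly and a discretization in the spirit of Lemma \ref{lemma, convolution for lp}), and $f_{\vec a}$ restricted to a fundamental domain agrees with $\sum_k a_ke^{2\pi ikx}$ up to a controlled $\mathscr S$-remainder. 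Feeding $f_{\vec a}$ into $W_{p,q}^s\subset L_p$ and again using Lemma \ref{lemma, equivalent norm, compact support} yields $\|\sum_k a_ke^{2\pi ikx}\|_{L_p(\mathbb T^n)}\lesssim\|f_{\vec a}\|_{L_p}\lesssim\|f_{\vec a}\|_{W_{p,q}^s}\sim\|\widehat{f_{\vec a}}\|_{L_q^s}\sim\|\{a_k\}\|_{l_q^{s,0}}$, and symmetrically for $L_p\subset W_{p,q}^s$.

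The hard part will be the passage between the torus $L_p(\mathbb T^n)$-norm and the $L_p(\mathbb R^n)$-norm of the compactly supported model function while keeping both sides genuinely equivalent (not just one-sided), i.e.\ controlling the tails that appear when one periodizes a function that is only \emph{approximately} supported in a fundamental domain, and making sure the restriction $p\geqslant 1$ is exactly what is needed (this is where Minkowski/periodization arguments break for $p<1$, consistent with the hypothesis). A secondary technical point is the sampling equivalence $\|\widehat f\|_{L_q^s}\sim\|\{\widehat f(k)\}_k\|_{l_q^{s,0}}$ for $\widehat f$ band-limited to a fixed ball, uniformly in the center — but this is classical (Plancherel--Pólya for $q\geqslant1$, Plancherel--Pólya--Nikol'skii for $q<1$) and the weight $\langle\xi\rangle^s$ is locally constant on unit scales, so it poses no real difficulty.
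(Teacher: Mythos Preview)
Your reduction to the continuous inequality $\|f\|_{L_p}\lesssim\|\widehat f\|_{L_q^s}$ for $f$ supported in a fixed ball, via Corollary~\ref{corollary, mild characterization}, is exactly how the paper proceeds, and your treatment of the sufficiency direction (Fourier-series inequality $\Rightarrow$ embedding) is correct and essentially the same as the paper's: periodization and Poisson summation identify $a_k=\widehat f(k)$, and the sampling bound $\|\{\widehat f(k)\}\|_{l_q^{s,0}}\lesssim\|\widehat f\|_{L_q^s}$ follows from the band-limitedness of $\widehat f$. (The paper implements this by writing $\widehat f(k)=(\widehat f\ast\widehat\sigma)(k)$ and dominating $|\widehat\sigma|$ pointwise by a kernel with $\widehat\phi\gtrsim\langle\cdot\rangle^{-N}$, then applying Lemma~\ref{lemma, convolution for p<1}; your appeal to Plancherel--P\'olya amounts to the same thing.)

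The necessity direction, however, has a genuine gap: your test function $f_{\vec a}(x)=\sum_k a_k\widehat\psi(x-k)$ has the space and frequency roles reversed. As you compute, $\widehat{f_{\vec a}}(\xi)=\psi(-\xi)\sum_k a_ke^{-2\pi ik\xi}$ is a \emph{single} bump near the origin, so $\langle\xi\rangle^s\sim1$ on its support and $\|\widehat{f_{\vec a}}\|_{L_q^s}$ cannot see the weight in $\|\{a_k\}\|_{l_q^{s,0}}$ for $s\neq0$; even for $s=0$ this quantity is $\sim\|\sum_k a_ke^{-2\pi ik\xi}\|_{L_q(\mathbb T^n)}$, not $\|\{a_k\}\|_{l_q}$. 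Nor does $f_{\vec a}$ agree with the trigonometric polynomial on a fundamental domain: it is a sum of translated Schwartz bumps, not oscillations. Finally, $f_{\vec a}$ is not compactly supported in $x$, so Lemma~\ref{lemma, equivalent norm, compact support} does not apply to it. The correct construction (this is what the paper does) puts the compact support on the \emph{physical} side: take $f(x)=\eta(x)\sum_k a_ke^{2\pi ikx}$ with $\eta$ a smooth cutoff equal to $1$ on $Q_0$. Then $f$ is compactly supported, $\widehat f(\xi)=\sum_k a_k\widehat\eta(\xi-k)$ is a sum of rapidly decaying bumps centred at the lattice points, and the discrete convolution Lemma~\ref{lemma, convolution for lp} gives $\|\widehat f\|_{L_q^s}\lesssim\|\{a_k\}\|_{l_q^{s,0}}$. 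Feeding this $f$ into the assumed embedding and restricting to $Q_0$ yields the Fourier-series inequality. Your parenthetical about ``smooth bumps $\psi_k$ overlapping boundedly'' suggests you had this picture in mind, but the function you actually wrote down does not realise it.
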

\begin{proof}
We first verify the statement (1).

\textbf{Sufficiency of (1):}
By Corollary \ref{corollary, mild characterization}, to prove $W_{p,q}^s\subset L_p$, we only need to verify
\begin{equation}\label{for proof, characterization, 1}
  \|f\|_{L_p(\mathbb{R}^n)}\lesssim \|\widehat{f}\|_{L_q^{s}(\mathbb{R}^n)}
\end{equation}
for $f$ supported in $\frac{1}{2}Q_0$, where $Q_{k}$ denotes the unit closed cube
centered at $k$.
By a standard limiting argument, we only need to verify (\ref{for proof, characterization, 1}) for $f\in \mathscr{S}$.

Take a smooth cut-off function $\sigma$ with compact supported in $\frac{2}{3}Q_0$, satisfying $\sigma=1$ on $\frac{1}{2}Q_0$.
A fixed function $f\in \mathscr{S}$ supported in $\frac{1}{2}Q_0$ can be reconstructed by
\begin{equation}
f(x)=\sum_{k\in \mathbb{Z}^n}a_ke^{2\pi ik\xi}\sigma(x),
\end{equation}
where the Fourier coefficient $a_k=\widehat{f}(k)$.
Observing that $\widehat{f}(k)$ can be expressed as
\begin{equation}
\widehat{f}(k)=\mathscr{F}(\sigma f)(k)=\int_{\mathbb{R}}\widehat{f}(y)\widehat{\sigma}(k-y)dy.
\end{equation}
For a fixed sufficiently large $N$, we can find a positive smooth function $\phi$ with compact support contained in $\frac{1}{2}Q$, such that
\begin{equation}
\widehat{\phi}(x) \gtrsim \langle x\rangle^{-N}.
\end{equation}
In fact, take $\rho$ to be a smooth function supported in $\frac{1}{2}Q$, such that $\widehat{\rho}$ is nonnegative and $\widehat{\rho}(0)>0$.
The function $\phi$ can be chosen as
\begin{equation}
\phi=\rho\cdot (\langle y\rangle^{-N})^{\vee}.
\end{equation}
Observing
\begin{equation}
|\widehat{\sigma}(k-y)|\lesssim \langle x-y\rangle^{-N}\lesssim  \widehat{\phi}(x-y)
\end{equation}
for $x\in Q_k$,
we obtain that
\begin{equation}
|\widehat{f}(k)|\lesssim \int_{\mathbb{R}}|\widehat{f}(y)||\widehat{\sigma}(k-y)|dy\lesssim \int_{\mathbb{R}}|\widehat{f}(y)||\widehat{\phi}(x-y)|dy=(|\widehat{f}|\ast |\widehat{\phi}|)(x)
\end{equation}
for $x\in Q_k$.
Using Young's inequality or Lemma \ref{lemma, convolution for p<1}, we obtain that
\begin{equation}
\begin{split}
\|\{\widehat{f}(k)\}\|_{l_q^{s,0}}
\lesssim
\||\widehat{f}|\ast |\widehat{\phi}|\|_{L_q^s}
\lesssim
\|\widehat{f}\|_{_{L_q^s}}\|\widehat{\phi}\|_{L_{q\wedge 1}^s}
\lesssim
\|\widehat{f}\|_{L_q^s}.
\end{split}
\end{equation}
Thus, we conclude
\begin{equation}
  \begin{split}
    \|f\|_{L_p(\mathbb{R}^n)}
    = &
    \|\sum_{k\in \mathbb{Z}^n}\widehat{f}(k)e^{2\pi ikx}\|_{L_p(\mathbb{T}^n)}
    \\
    =&
    \lim_{N \rightarrow \infty}\|\sum_{k\in \mathbb{Z}^n}\widehat{f}(k)e^{2\pi ikx}\chi_{\{|k|\leqslant N\}}(k)\|_{L_p(\mathbb{T}^n)}
    \\
    \lesssim &
    \lim_{N \rightarrow \infty}\|\{\widehat{f}(k)\chi_{\{|k|\leqslant N\}}(k)\}_{k\in \mathbb{Z}^n}\|_{l_q^{s,0}}
    \\
    = &
    \|\{\widehat{f}(k)\}_{k\in \mathbb{Z}^n}\|_{l_q^{s,0}}\lesssim \|\widehat{f}\|_{L_q^s(\mathbb{R}^n)}
  \end{split}
\end{equation}
for any $f$ with support contained in $\frac{1}{2}Q_0$.

\medskip

\textbf{Necessity of (1):}
Take a smooth function $\eta$ with compact support in $2Q_0$, satisfying $\eta(x)=1$ for $x\in Q_0$.
For a fixed truncated sequence $\vec{a}=\{a_k\}_{k\in \mathbb{Z}^n}$,
we define
\begin{equation}
  f(x)=\eta(x)\sum_{k\in \mathbb{Z}^n}a_ke^{2\pi ikx}.
\end{equation}
Obviously, $f$ is a smooth function supported in $2Q_0$.
Observing that
\begin{equation}
  \widehat{f}(\xi)=\sum_{k\in \mathbb{Z}^n}a_k\widehat{\eta}(\xi-k),
\end{equation}
and using the rapid decay of $\widehat{\eta}$,
we obtain
\begin{equation}
  |\widehat{f}(\xi)|
  \lesssim
  \sum_{k\in \mathbb{Z}^n}|a_k|\langle l-k\rangle^{-\mathscr{L}}
\end{equation}
for $\xi\in Q_l$.
Denoting $|\vec{a}|=\{|a_k|\}_{k\in \mathbb{Z}^n}$,
we deduce that
\begin{equation}
\begin{split}
  \|\widehat{f}\|_{L_q^s}
  \lesssim &
  \|\{(|\vec{a}|\ast \langle \cdot\rangle^{-\mathscr{L}})(k)\}_{k\in \mathbb{Z}^n}\|_{l_q^{s,0}}
  \\
  \lesssim &
  \|\{\langle k\rangle^{-\mathscr{L}}\}_{k\in \mathbb{Z}^n}\|_{l_{q\wedge 1}^{|s|,0}} \|\vec{a}\|_{l_q^{s,0}}
  \lesssim
  \|\vec{a}\|_{l_q^{s,0}},
\end{split}
\end{equation}
where the last inequality is an application of Lemma \ref{lemma, convolution for lp}.
Thus, we duduce
\begin{equation}
  \begin{split}
    \|\sum_{k\in \mathbb{Z}^n}a_ke^{2\pi ikx}\|_{L_p(\mathbb{T}^n)}
    \lesssim &
    \|\sum_{k\in \mathbb{Z}^n}\eta(x)a_ke^{2\pi ikx}\|_{L_p(\mathbb{R}^n)}
    \\
    = &
    \|f\|_{L_p(\mathbb{R}^n)}
    \lesssim
    \|\widehat{f}\|_{L_q^s(\mathbb{R}^n)}
    \lesssim
    \|\{a_k\}_{k\in \mathbb{Z}^n}\|_{l_q^{s,0}}.
  \end{split}
\end{equation}

Now, we turn to the proof of statement (2).

\medskip

\textbf{Sufficiency of (2):}
Using Corollary \ref{corollary, mild characterization} and a limiting argument,
we only need to verify
\begin{equation}
  \|\widehat{f}\|_{L_q^s(\mathbb{R}^n)}\lesssim \|f\|_{L_p(\mathbb{R}^n)}
\end{equation}
for any fixed Schwartz function $f$ with support in $\frac{1}{2}Q_0$.
For a fixed smooth cut-off function $\sigma$ with compact supported in $\frac{2}{3}Q_0$, satisfying $\sigma=1$ on $\frac{1}{2}Q_0$,
a fixed $f\in \mathscr{S}$ supported in $\frac{1}{2}Q_0$ can be reconstructed by
\begin{equation}
f(x)=\sum_{k\in \mathbb{Z}^n}a_ke^{2\pi ik\xi}\sigma(x),
\end{equation}
where the Fourier coefficient $a_k=\widehat{f}(k)$.
By a similar argument as the proof in Necessity part of (1), we deduce
\begin{equation}
  \begin{split}
    \|\widehat{f}\|_{L_q^s(\mathbb{R}^n)}
    \lesssim &
    \|\{\widehat{f}(k)\}_{k\in \mathbb{Z}^n}\|_{l_q^{s,0}}
    \\
    =&
    \lim_{N \rightarrow \infty}\|\{\widehat{f}(k)\chi_{\{|k|\leqslant N\}}(k)\}_{k\in \mathbb{Z}^n}\|_{l_q^{s,0}}
    \\
    \lesssim &
    \lim_{N \rightarrow \infty}\|\sum_{k\in \mathbb{Z}^n}\widehat{f}(k)e^{2\pi ikx}\chi_{\{|k|\leqslant N\}}(k)\|_{L_p(\mathbb{T}^n)}
    \\
    = &
    \|\sum_{k\in \mathbb{Z}^n}\widehat{f}(k)e^{2\pi ikx}\|_{L_p(\mathbb{T}^n)}
    =
    \|f\|_{L_p(\mathbb{R}^n)},
  \end{split}
\end{equation}
which is the desired conclusion.

\medskip

\textbf{Necessity of (2):}
For a truncated sequence $\{a_k\}_{k\in \mathbb{Z}^n}$, we define
\begin{equation}
  f=\chi_{Q_0}\sum_{k\in \mathbb{Z}^n}a_ke^{2\pi ikx}.
\end{equation}
Choose a smooth function $\eta$ supported in $2Q_0$ such that $\eta(x)=1$ in $Q_0$.
We have $f=f\eta$, and then $\widehat{f}=\widehat{f}\ast \widehat{\eta}$.

Using the same argument as in the proof of Sufficiency part of (1). We have
\begin{equation}
\begin{split}
\|\{a_k\}_{k\in \mathbb{Z}^n}\|_{l_q^{s,0}}
=
\|\{\widehat{f}(k)\}\|_{l_q^{s,0}}
\lesssim
\|\widehat{f}\|_{L_q^s}.
\end{split}
\end{equation}
Thus,
\begin{equation}
  \|\{a_k\}_{k\in \mathbb{Z}^n}\|_{l_q^{s,0}}
  \lesssim
  \|\widehat{f}\|_{L_q^s(\mathbb{R}^n)}
  \lesssim
  \|f\|_{L_p(\mathbb{R}^n)}
  =
  \|\sum_{k\in \mathbb{Z}^n}a_ke^{2\pi ikx}\|_{L_p(\mathbb{T}^n)},
\end{equation}
which is the desired conclusion.
\end{proof}

\begin{remark}
By Corollary \ref{corollary, mild characterization} and Proposition \ref{proposition, fourier series}, we actually establish some relations between
old inequalities and new inequalities.
For instance, let $p,q\geqslant 2$, Theorem \ref{theorem, embedding between Wiener and hp} and Proposition \ref{proposition, fourier series} imply that
  \begin{equation}
    \|\sum_{k\in \mathbb{Z}^n}a_ke^{2\pi ikx}\|_{L_p(\mathbb{T}^n)}\lesssim \|\{a_k\}_{k\in \mathbb{Z}^n}\|_{l_p^{n(1-2/p),0}},
  \end{equation}
which is just the Theorem 6 in \cite{Hardy}.
\end{remark}

\section{Inclusion relations between $W_{p,q}^s$ and $B_{p,q}$}

This section is devoted to the proof of Theorem \ref{theorem, embedding between Wiener and Besov}.
For the sufficiency part, we actually only need to deal with some endpoint cases, then the final conclusion follows by an interpolation argument.
For the necessity, we show that the embedding relations between Besov and Wiener amalgam spaces actually imply
embedding relations about corresponding weighted sequences. Then, Lemma \ref{lemma, Sharpness of embedding, for dyadic decomposition} ensures the necessity.

\subsection{Sufficiency}\

\medskip

\textbf{Sufficiency for $W_{p,q}^s\subset B_{p,q}$}

\medskip

\textbf{(i). For $1/p\geqslant 1/q$:}
We firstly verify
\begin{equation}\label{for proof, theorem Besov, 4}
  W_{p,\infty}^{n/2}\subset B_{p,\infty}
\end{equation}
for $p\leqslant 2$.
Take the window function $\phi$ to be a smooth function with Fourier support on $B(0,1)$.
Obviously, we have $\langle \xi\rangle \sim 2^j$ if $V_{\phi}(\Delta_jf)(x,\xi)\neq 0$.
We also have $|\{\xi: V_{\phi}(\Delta_jf)(x,\xi)\neq 0\}|\sim 2^{jn}$.
Using Proposition \ref{proposition, Littlewood-paley type inequality} and H\"{o}lder's inequality, we deduce
\begin{equation}
  \begin{split}
    \|\Delta_j f\|_{L_p}
    \lesssim
    \|\Delta_j f\|_{h_p}
    \lesssim &
    \|\Delta_j f\|_{W_{p,2}}
    \\
    = &
    \left\|\|V_{\phi}(\Delta_jf)(x,\xi)\|_{L_{\xi,2}}\right\|_{L_{x,p}}
    \\
    \lesssim &
    \left\|2^{jn/2}\|V_{\phi}(\Delta_jf)(x,\xi)\|_{L_{\infty}}\right\|_{L_p}
    \\
    \sim &
    \left\|\|V_{\phi}(\Delta_jf)(x,\xi)\|_{L_{\infty}^{n/2}}\right\|_{L_p}
    \sim
    \|\Delta_jf\|_{W_{p,\infty}^{n/2}}.
  \end{split}
\end{equation}
Using the convolution inequality $W_{p\wedge 1,\infty}\ast W_{p,\infty}^s\subset W_{p,\infty}^s$ (see \cite{Guo_Characterization}) , we deduce
\begin{equation}
  \begin{split}
    \|\Delta_jf\|_{W_{p,\infty}^{n/2}}
    = &
    \|\check{\psi_j} \ast f\|_{W_{p,\infty}^{n/2}}
    \\
    \lesssim &
    \|\check{\psi_j}\|_{W_{p\wedge 1,\infty}}\cdot \|f\|_{W_{p,\infty}^{n/2}}.
  \end{split}
\end{equation}
Moreover, for a fixed $\xi$ such that  $V_{\phi}\check{\psi_j}(x,\xi) \neq 0$ for some $j$, we have
\begin{equation}
  \begin{split}
    |V_{\phi}\check{\psi_j}(x,\xi)|
    =&
    \left|\int_{\mathbb{R}^n}\check{\psi_j}(y)\bar{\phi}(y-x)e^{-2\pi iy\cdot \xi}dy\right|
    \\
    \lesssim &
    \left|\int_{B(0,\langle x\rangle/2)}\right|+\left|\int_{B(0,\langle x\rangle/2)^c}\right|: = A+B.
  \end{split}
\end{equation}
Using the rapid decay of $\phi$, we have
\begin{equation}
  \begin{split}
    A
    =&
    \left|\int_{B(0,\langle x\rangle/2)}\check{\psi_j}(y)\bar{\phi}(y-x)e^{-2\pi iy\cdot \xi}dy\right|
    \\
    \lesssim &
    \langle x\rangle^{-\mathscr{L}}\int_{B(0,\langle x\rangle/2)}|\check{\psi_j}(y)|dy
    \\
    \lesssim &
    \langle x\rangle^{-\mathscr{L}}\|\check{\psi_j}\|_{L_1}\lesssim \langle x\rangle^{-\mathscr{L}}.
  \end{split}
\end{equation}
Using the rapid decay of $h$, we have
\begin{equation}
  \begin{split}
    B
    =&
    \left|\int_{B(0,\langle x\rangle/2)^c}\check{\psi_j}(y)\bar{\phi}(y-x)e^{-2\pi iy\cdot \xi}dy\right|
    \\
    \lesssim &
    \left|2^{jn}\int_{B(0,\langle x\rangle/2)^c}\check{\psi}(2^{j}y)\bar{\phi}(y-x)e^{-2\pi iy\cdot \xi}dy\right|
    \\
    \lesssim &
    2^{jn}\langle 2^j\langle x\rangle \rangle^{-\mathscr{L}}\|\phi\|_{L_1}\lesssim \langle x\rangle^{-\mathscr{L}}
  \end{split}
\end{equation}
for $j\geqslant 1$. With a slight modification, we can also conclude
\begin{equation}
  \left|\int_{B(0,\langle x\rangle/2)^c}\check{\psi_0}(y)\bar{\phi}(y-x)e^{-2\pi iy\cdot \xi}dy\right|\lesssim \langle x\rangle^{-\mathscr{L}}.
\end{equation}
Combing with the estimates of term I and II, we have $|V_{\phi}\check{\psi_j}(x,\xi)|\lesssim \langle x\rangle^{-\mathscr{L}}$ uniformly for all $j\in \mathbb{N}$.
By the definition of $W_{p,\infty}$,
\begin{equation}
  \|\check{\psi_j}\|_{W_{p\wedge 1,\infty}}\lesssim \|\langle x\rangle^{-\mathscr{L}}\|_{L_{p\wedge 1}}\lesssim 1,
\end{equation}
which implies that
\begin{equation}
   \|\Delta_j f\|_{L_p}\lesssim \|\Delta_jf\|_{W_{p,\infty}^{n/2}}\lesssim \|\check{\psi_j}\|_{W_{p\wedge 1,\infty}}\cdot \|f\|_{W_{p,\infty}^{n/2}} \lesssim \|f\|_{W_{p,\infty}^{n/2}}
\end{equation}
for all $j\in \mathbb{N}$. Taking $l^{\infty}$-norm on the left side, we obtain
\begin{equation}
  \|f\|_{B_{p,\infty}}\lesssim \|f\|_{W_{p,\infty}^{n/2}}
\end{equation}
for $p\leqslant 2$.
On the other hand, we use Proposition \ref{proposition, Littlewood-paley type inequality} to deduce
\begin{equation}\label{for proof, theorem Besov, 5}
  W_{p,2}\subset h_p\sim F_{p,2}\subset B_{p,2}
\end{equation}
for $p\leqslant 2$.  We also have
\begin{equation}\label{for proof, theorem Besov, 6}
  W_{p,p}^{\alpha(p,p)}=M_{p,p}^{\alpha(p,p)}\subset B_{p,p}
\end{equation}
for $0<p\leqslant \infty$
by the known inclusion between modulation and Besov space (see \cite{Guo_Zhao_interpolation,Wang_Han,Sugimoto_Tomita}).
Now, the desired conclusion follows by an interpolation argument among
(\ref{for proof, theorem Besov, 4}), (\ref{for proof, theorem Besov, 5}) and (\ref{for proof, theorem Besov, 6}).

\medskip

\textbf{(ii). For $1/p< 1/q$:}
If $2\leqslant p\leqslant \infty$,
we use the Haudsorff-Young inequality $\|f\|_{L_p}\lesssim \|\hat{f}\|_{L_{p'}}$ and Corollary \ref{corollary, mild characterization} to deduce
$W_{p,p'}\subset L_p$. Thus, we have
\begin{equation}
  W_{p,q}^{\alpha(p,q)+\epsilon}\subset W_{p,p'}\subset L_p\subset B_{p,q}^{-\epsilon}
\end{equation}
for any $\epsilon>0$. By the potential lifting, we actually conclude
$W_{p,q}^s\subset B_{p,q}$
for $s>\alpha(p,q)$.

If $p\leqslant 2$, we recall $ W_{p,p}=M_{p,p}\subset B_{p,p}$ for $0<p\leqslant 2$ (see \cite{Guo_Zhao_interpolation,Wang_Han,Sugimoto_Tomita}). Then
\begin{equation}
  W_{p,q}^{\alpha(p,q)+\epsilon}=W_{p,q}^{\epsilon}\subset W_{p,p}^{\epsilon} \subset B_{p,p}^{\epsilon} \subset B_{p,q}
\end{equation}
for any $\epsilon >0$.

\bigskip

\textbf{Sufficiency for $B_{p,q}\subset W_{p,q}^s$}

\medskip

\textbf{(i). For $1/p\leqslant 1/q$:}
We firstly show
\begin{equation}\label{for proof, theorem Besov, 8}
  B_{p,q}\subset W_{p,q}^{n(1/2-1/q)}
\end{equation}
for $2\leqslant p\leqslant \infty$, $q\leqslant 1$. We have
\begin{equation}\label{for proof, theorem Besov, 7}
  \begin{split}
    \|f\|_{W_{p,q}^{n(1/2-1/q)}}
    \lesssim
    \left(\sum_{j\in \mathbb{N}}\|\Delta_jf\|^q_{W_{p,q}^{n(1/2-1/q)}}\right)^{1/q}.
  \end{split}
\end{equation}
Observing that $|\{\xi: V_{\phi}(\Delta_jf)(x,\xi)\neq 0\}|\sim 2^{jn}$ and $\langle\xi\rangle \sim 2^j$ if $V_{\phi}(\Delta_jf)(x,\xi)\neq 0$.
We use H\"{o}lder's inequality to deduce that
\begin{equation}
  \begin{split}
    \|\Delta_jf\|_{W_{p,q}^{n(1/2-1/q)}}
    = &
    \left\|\|V_{\phi}(\Delta_jf)(x,\xi)\|_{L_q^{n(1/2-1/q)}}\right\|_{L_p}
    \\
    \sim &
    2^{jn(1/2-1/q)}\left\|\|V_{\phi}(\Delta_jf)(x,\xi)\|_{L_q}\right\|_{L_p}
    \\
    \lesssim &
    2^{jn(1/2-1/q)}\cdot 2^{jn(1/q-1/2)}\left\|\|V_{\phi}(\Delta_jf)(x,\xi)\|_{L_2}\right\|_{L_p}
    \\
    = &
    \|\Delta_jf\|_{W_{p,2}}.
  \end{split}
\end{equation}
Using $L_p\subset W_{p,2}$ for $2\leqslant p\leqslant \infty$ proved in Theorem B, we deduce
\begin{equation}
  \|\Delta_jf\|_{W_{p,q}^{n(1/2-1/q)}}\lesssim \|\Delta_jf\|_{W_{p,2}}\lesssim \|\Delta_jf\|_{L_p}.
\end{equation}
Thus, we use (\ref{for proof, theorem Besov, 7}) to deduce
\begin{equation*}
  \|f\|_{W_{p,q}^{n(1/2-1/q)}}
  \lesssim
  \left(\sum_{j\in \mathbb{N}}\|\Delta_jf\|^q_{W_{p,q}^{n(1/2-1/q)}}\right)^{1/q}
  \lesssim
  \left(\sum_{j\in \mathbb{N}}\|\Delta_jf\|^q_{L_p}\right)^{1/q}\sim \|f\|_{B_{p,q}}.
\end{equation*}
On the other hand, by the duality of (\ref{for proof, theorem Besov, 5}), we obtain
\begin{equation}\label{for proof, theorem Besov, 9}
  B_{p,2}\subset W_{p,2}
\end{equation}
for $2\leqslant p\leqslant \infty$. We also recall
\begin{equation}\label{for proof, theorem Besov, 10}
  B_{p,p}\subset M_{p,p}^{\beta(p,p)}=W_{p,p}^{\beta(p,p)}
\end{equation}
by the known inclusion between modulation and Besov space (see \cite{Guo_Zhao_interpolation,Wang_Han,Sugimoto_Tomita}).
By an interpolation argument among
(\ref{for proof, theorem Besov, 8}), (\ref{for proof, theorem Besov, 9}) and (\ref{for proof, theorem Besov, 10}),
we obtain the desired conclusion.

\medskip

\textbf{(ii). For $1/p> 1/q$:}
By the inclusion $h_p\subset W_{p,q}^{\beta(p,q)}$ to be proved in next section, we obtain
\begin{equation}
  B_{p,q}\subset F_{p,2}^{-\varepsilon} \subset W_{p,q}^{\beta(p,q)-\varepsilon}
\end{equation}
for any $\varepsilon >0$.

\subsection{Necessity}
\begin{proposition}\label{proposition, necessity for Besov, 1}
Let $0<p, q\leqslant \infty$, $s\in \mathbb{R}$. Then we have
\begin{enumerate}
  \item
  $W_{p,q}^s\subset B_{p,q} \Longrightarrow l_p^{s+n/q,1}\subset l_q^{n(1-1/p),1}$,
  \item
  $B_{p,q}\subset W_{p,q}^s \Longrightarrow l_q^{n(1-1/p),1}\subset l_p^{s+n/q,1}$.
\end{enumerate}
\end{proposition}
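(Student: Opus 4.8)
The plan is to reduce both implications to one test--function construction. For every finitely supported sequence $\vec c=\{c_j\}_{j\in\mathbb N}$ I will produce $f_{\vec c}\in\mathscr S$ with
\[
\|f_{\vec c}\|_{B_{p,q}}\sim \|\vec c\|_{l_q^{n(1-1/p),1}},\qquad
\|f_{\vec c}\|_{W_{p,q}^s}\sim \|\vec c\|_{l_p^{s+n/q,1}},
\]
the implied constants being independent of $\vec c$. Granting this, (1) follows at once: if $W_{p,q}^s\subset B_{p,q}$, then the inclusion is bounded (closed graph theorem, both spaces being quasi-Banach and continuously embedded in $\mathscr S'$), so $\|f_{\vec c}\|_{B_{p,q}}\lesssim\|f_{\vec c}\|_{W_{p,q}^s}$, hence $\|\vec c\|_{l_q^{n(1-1/p),1}}\lesssim\|\vec c\|_{l_p^{s+n/q,1}}$ for every finitely supported $\vec c$; letting the truncation exhaust $\mathbb N$ (monotone convergence) gives $l_p^{s+n/q,1}\subset l_q^{n(1-1/p),1}$. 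Statement (2) is obtained by interchanging the roles of the two norms.

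\textbf{The construction.} Fix $g\in\mathscr S$ with $\widehat g\geqslant 0$, $\widehat g\not\equiv 0$, supported in a thin annulus $\{\,1-\delta<|\xi|<1+\delta\,\}$ on which $\psi\equiv 1$ (recall $\psi=\varphi(\cdot)-\varphi(2\cdot)$ equals $1$ near $|\xi|=1$). Put $g_j(x)=2^{jn}g(2^jx)$, so $\widehat{g_j}(\xi)=\widehat g(2^{-j}\xi)$ is supported in $\{|\xi|\sim 2^j\}$, inside the region where $\psi_j\equiv 1$; then set $f_{\vec c}=\sum_j c_j\,T_{x_j}g_j$ with $\{x_j\}_{j\in\mathbb N}\subset\mathbb Z^n$ pairwise $R$--separated for a large fixed $R$. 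The choice of frequency window makes the Littlewood--Paley decomposition \emph{exact}: since $\psi_i\widehat{g_j}=\delta_{ij}\widehat{g_j}$, we have $\Delta_i f_{\vec c}=c_i\,T_{x_i}g_i$ for all $i$, hence $\|\Delta_i f_{\vec c}\|_{L_p}=|c_i|\,\|g_i\|_{L_p}=|c_i|\,2^{in(1-1/p)}\|g\|_{L_p}$ and the Besov identity $\|f_{\vec c}\|_{B_{p,q}}=\|g\|_{L_p}\,\|\vec c\|_{l_q^{n(1-1/p),1}}$ follows immediately, uniformly for $0<p,q\leqslant\infty$ (with the usual $\sup$--modification when $p=\infty$ or $q=\infty$).

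\textbf{The Wiener--amalgam estimate.} This is the substance. Choosing the window $\phi$ with $\widehat\phi$ supported in a small ball, a direct computation exploiting the dilation structure of $g_j$ gives
\[
V_\phi g_j(x,\xi)=\overline{\phi(-x)}\,\widehat g(2^{-j}\xi)+E_j(x,\xi),\qquad
|E_j(x,\xi)|\lesssim_N 2^{-j}\langle x\rangle^{-N}\mathbf{1}_{\{|\xi|\sim 2^j\}} .
\]
Since $\langle\xi\rangle\sim 2^j$ on the $\xi$--support, this yields $\|g_j\|_{W_{p,q}^s}\sim 2^{js}\|g_j\|_{W_{p,q}}\sim 2^{j(s+n/q)}$. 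For the superposition one uses that the $\widehat{g_j}$ occupy disjoint dyadic shells and the $T_{x_j}$ are $R$--separated: for $\xi$ in the shell of index $j$ only the $j$-th term of $V_\phi f_{\vec c}(x,\xi)$ contributes (up to the negligible $E_j$), whence
\[
\|V_\phi f_{\vec c}(x,\cdot)\|_{L_{\xi,q}^{s}}\sim\Big(\sum_j \big(|c_j|\,2^{j(s+n/q)}\big)^q|\phi(x_j-x)|^q\Big)^{1/q}.
\]
Taking the $L_{x,p}$--norm and invoking the $R$--separation of $\{x_j\}$ — a Young/Schur estimate over unit cubes for the upper bound, and restriction of $x$ to the pairwise disjoint balls $B(x_j,r_0)$ where $|\phi(x_j-x)|\gtrsim 1$ for the lower bound — gives $\|f_{\vec c}\|_{W_{p,q}^s}\sim\|\vec c\|_{l_p^{s+n/q,1}}$. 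I expect this to be the main obstacle: one must make the asymptotics $V_\phi g_j\approx\overline{\phi(-x)}\widehat g(2^{-j}\xi)$ and the almost--orthogonality of $\{T_{x_j}g_j\}$ quantitatively precise, and carry out the $L_{x,p}$/$l_q^{\xi}$ bookkeeping so that \emph{both} inequalities hold uniformly across the full range $0<p,q\leqslant\infty$ (including the endpoint modifications). Once this estimate is in hand, (1) and (2) follow as described above.
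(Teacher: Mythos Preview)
Your approach is correct and is essentially the paper's own argument: both test the embedding on functions of the form $\sum_j c_j\,T_{x_j}g_j$ with $\widehat{g_j}$ supported in pairwise disjoint dyadic annuli (so the Besov norm factors exactly) and with the spatial translates well separated (so the Wiener norm factors into an $\ell^p$ sum of $\|g_j\|_{W_{p,q}^s}\sim 2^{j(s+n/q)}$).

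The only differences are in execution. First, the paper chooses $\widehat h$ to equal $1$ on a plateau, which gives $|V_\phi h_j(x,\xi)|=|\phi(-x)|$ \emph{exactly} on a set $E_j$ of measure $\sim 2^{jn}$; this replaces your main-term/error decomposition $V_\phi g_j=\overline{\phi(-x)}\widehat g(2^{-j}\xi)+E_j$ and the accompanying integration-by-parts estimate for $E_j$. Second, the paper lets the separation parameter $N\to\infty$ rather than working with a fixed large $R$: since the sequence is truncated, the finitely many translates become asymptotically disjoint and one reads off $\lim_N\|F_N\|_{W_{p,q}^s}=(\sum_j|c_j|^p\|h_j\|_{W_{p,q}^s}^p)^{1/p}$ directly, bypassing your Young/Schur bookkeeping on unit cubes. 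Both simplifications are cosmetic; your fixed-$R$ argument goes through as written (the upper bound via $\ell^{p/q}$-Young requires no separation at all, and the lower bound needs only $R>2r_0$), so the proposal is sound.
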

\begin{proof}
We only state the proof for $p\neq \infty$. The case $p=\infty$ can be deduced similarly with a slight modification.
In this proof, we take the window function $\phi$ to be a smooth function with small Fourier support in $B(0,10^{-10})$.
Choose a smooth function $h$ with Fourier support on $3/4\leqslant |\xi|\leqslant 4/3$,
satisfying $\hat{h}(\xi)=1$ on $7/8\leqslant |\xi|\leqslant 8/7$.
By the assumption of $h$, we have
$\Delta_jh_j=h_j$ for all $j\in \mathbb{N}$, where $\widehat{h_j}(\xi)=\hat{h}(\xi/2^j)$.
Denote
$$F_N=\sum_{j=0}^{\infty}a_jT_{Nje_0}h_j,$$ where $\{a_j\}_{j=0}^{\infty}$ is a truncated (only finite nonzero items) sequence of nonnegative real number,
$e_0=(1,0,\cdots,0)$ is the unit vector of $\mathbb{R}^n$, $N$ is a sufficient large number to be chosen later.

We first estimate the norm of $B_{p,q}$. A direct calculation yields
\begin{equation}
  \begin{split}
    \|F_N\|_{B_{p,q}}
    = &
    \|\{\|\Delta_j F_N\|_{L_p}\}_{j\in \mathbb{N}} \|_{l_q^{0,1}}
    \\
    = &
    \|\{\|a_j\Delta_j(T_{Nje_0}h_j)\|_{L_p}\}_{j\in \mathbb{N}} \|_{l_q^{0,1}}
    \\
    = &
    \|\{a_j\|h_j\|_{L_p}\}_{j\in \mathbb{N}}\|_{l_q^{0,1}}
    \\
    \sim &
    \|\{2^{jn(1-1/p)}a_j\}_{j\in \mathbb{N}}\|_{l_q^{0,1}}\sim \|\{a_j\}\|_{l_q^{n(1-1/p),1}}.
  \end{split}
\end{equation}

Next, we estimate the norm of $W_{p,q}^s$.
By the almost orthogonality of $\{a_jT_{Nje_0}h_j\}_{j=0}^{\infty}$ as $N\rightarrow \infty$, we deduce
\begin{equation}
  \begin{split}
      \lim_{N\rightarrow \infty}\|F_N\|_{W_{p,q}^s}
      = &
      \lim_{N\rightarrow \infty}\|\sum_{j=0}^{\infty}a_jT_{Nje_0}h_j\|_{W_{p,q}^s}
      \\
      = &
      \lim_{N\rightarrow \infty}\left(\sum_{j=0}^{\infty}\|a_jT_{Nje_0}h_j\|^p_{W_{p,q}^s}\right)^{1/p}
      \\
      = &
      \left(\sum_{j=0}^{\infty}\|a_jh_j\|^p_{W_{p,q}^s}\right)^{1/p}.
  \end{split}
\end{equation}
By the same argument as in subsection 4.1, we have
\begin{equation}
  |V_{\phi}h_j(x,\xi)|\lesssim \langle x\rangle^{-\mathscr{L}}
\end{equation}
for all $j\in \mathbb{N}$.
Moreover, by the definition of $h$ and $\phi$, we have $\langle \xi\rangle \sim 2^j$ when $V_{\phi}h_j(x,\xi) \neq 0$.
We also have $|\{\xi: V_{\phi}(h_j)(x,\xi)\neq 0\}|\sim 2^{jn}$.
Thus, we obtain
\begin{equation}
  \|V_{\phi}h_j(x,\xi)\|_{L_q^s}
  \lesssim
  2^{j(s+n/q)}\langle x\rangle^{-\mathscr{L}},
\end{equation}
and
\begin{equation}
  \begin{split}
    \|h_j\|_{W_{p,q}^s}
    = &
    \left\|\|V_{\phi}h_j(x,\xi)\|_{L_q^s}\right\|_{L_p}
    \\
    \lesssim &
    2^{j(s+n/q)}\|\langle x\rangle^{-\mathscr{L}}\|_{L_p}\lesssim 2^{j(s+n/q)}.
  \end{split}
\end{equation}

On the other hand, we denote $E_j=\{\xi: \widehat{h_j}(\cdot)\overline{\widehat{\phi}}(\cdot-\xi)=\overline{\widehat{\phi}}(\cdot-\xi)\}$.
By the definition of $\phi$ and $h_j$,
we have $|V_{\phi}h_j(x,\xi)|=|\phi(-x)|$ and $\langle\xi \rangle \sim 2^j$ for $\xi \in E_j$.
We also have $|E_j|\sim 2^{jn}$.
Thus, we estimate the lower bound by
\begin{equation}
  \begin{split}
    \|h_j\|_{W_{p,q}^s}
    = &
    \left\|\|V_{\phi}h_j(x,\xi)\|_{L_{\xi,q}^s}\right\|_{L_{x,p}}
    \\
    \gtrsim  &
    \left\|\|V_{\phi}h_j(x,\xi)\chi_{E_j}(\xi)\|_{L_{\xi,q}^s}\right\|_{L_{x,p}}
    \\
    = &
    \left\|\|\phi(-x)\chi_{E_j}(\xi)\|_{L_{\xi,q}^s}\right\|_{L_{x,p}}
    =
    \|\chi_{E_j}\|_{L_q^s}\cdot\|\phi\|_{L_p}
    \sim
    2^{j(s+n/q)}.
  \end{split}
\end{equation}
Thus, we have
\begin{equation}
  \|h_j\|_{W_{p,q}^s}\sim 2^{j(s+n/q)}
\end{equation}
for all $j\in \mathbb{N}$,
which implies
\begin{equation}
  \begin{split}
      \lim_{N\rightarrow \infty}\|F_N\|_{W_{p,q}^s}
      = &
      \left(\sum_{j=0}^{\infty}\|a_jh_j\|^p_{W_{p,q}^s}\right)^{1/p}
      \sim
      \|\{a_j\}\|_{l_p^{s+n/q,1}}.
  \end{split}
\end{equation}

If $W_{p,q}^s\subset B_{p,q}$, we have $\|F_N\|_{B_{p,q}}\lesssim \|F_N\|_{W_{p,q}^s}$.
Letting $N\rightarrow \infty$ on both sides of the above inequality, we deduce
\begin{equation}
  \|\{a_j\}\|_{l_q^{n(1-1/p),1}}\lesssim \|\{a_j\}\|_{l_p^{s+n/q,1}}
\end{equation}
for any truncated sequence of nonnegative real number $\{a_j\}_{j=0}^{\infty}$, which implies the desired embedding $l_p^{s+n/q,1}\subset l_q^{n(1-1/p),1}$.
Similarly, if $B_{p,q}\subset W_{p,q}^s$, we obtain the embedding $l_q^{n(1-1/p),1}\subset l_p^{s+n/q,1}$.
\end{proof}

\begin{proposition}\label{proposition, necessity for Besov, 2}
Let $0<p,q\leqslant \infty$, $s\in \mathbb{R}$. Then we have
\begin{enumerate}
  \item
  $W_{p,q}^s\subset B_{p,q} \Longrightarrow l_p^{s+n/p,1}\subset l_q^{n/p,1}$,
  \item
  $B_{p,q}\subset W_{p,q}^s \Longrightarrow l_q^{n/p,1}\subset l_p^{s+n/p,1}$.
\end{enumerate}
\end{proposition}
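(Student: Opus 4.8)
The plan is to follow the pattern of Proposition \ref{proposition, necessity for Besov, 1}, but with a test function tuned to the exponents $s+n/p$ and $n/p$: in place of the dyadically rescaled bumps $h_j$ (which fill a whole dyadic annulus in frequency and shrink in space) I use their time--frequency dual, namely space-spread, frequency-localized wave packets. Fix a Schwartz function $G$ with $\widehat{G}$ supported in $B(0,10^{-10})$ and $G(0)\neq0$, fix a unit vector $v\in\mathbb{R}^n$, and for $j\geqslant1$ set $\xi_j=2^jv$ and
\[
g_j(x)=e^{2\pi i\xi_j\cdot x}G(2^{-j}x),\qquad\text{so that}\qquad\widehat{g_j}(\xi)=2^{jn}\widehat{G}\big(2^j(\xi-\xi_j)\big),
\]
with the convention $g_0=G$. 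Since $\widehat{g_j}$ is supported in $B(\xi_j,10^{-10}2^{-j})$, a direct inspection of the Littlewood--Paley annuli gives $\psi_j\equiv1$ there and $\psi_k\equiv0$ there for $k\neq j$; hence $\Delta_jg_j=g_j$ while $\Delta_kg_j=0$ for $k\neq j$, and the $\widehat{g_j}$ are pairwise disjointly supported.

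I would then record the two relevant norms of a single $g_j$. The Besov norm is immediate from $\Delta_kg_j=g_j$ iff $k=j$: one gets $\|g_j\|_{B_{p,q}}=\|g_j\|_{L_p}=2^{jn/p}\|G\|_{L_p}$. For the Wiener norm I argue as in subsection 4.1: taking a window $\phi$ with $\widehat\phi$ supported in $B(0,10^{-10})$ and $\widehat\phi(0)\neq0$, the function $V_\phi g_j(x,\xi)$ vanishes unless $\xi\in B(\xi_j,2\cdot10^{-10})$, a set of measure $\sim1$ on which $\langle\xi\rangle\sim2^j$, and $|V_\phi g_j(x,\xi)|\leqslant\big(|G(2^{-j}\cdot)|\ast|\phi|\big)(x)$; for the lower bound one uses that $G(2^{-j}\cdot)$ is essentially constant on the (fixed-size) essential support of $\phi(\cdot-x)$ and that $\widehat\phi(0)\neq0$ to get $|V_\phi g_j(x,\xi)|\gtrsim|G(2^{-j}x)|$ on $\{|\xi-\xi_j|<\varepsilon_0\}$ for some fixed $\varepsilon_0>0$. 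Estimating $|G(2^{-j}\cdot)|\ast|\phi|$ in $L_p$ by Young's inequality (or by Lemma \ref{lemma, convolution for p<1} with the fixed radius $R=1$ when $p<1$), this yields
\[
\|g_j\|_{W_{p,q}^s}\sim 2^{js}\|G(2^{-j}\cdot)\|_{L_p}\sim 2^{j(s+n/p)}.
\]
The essential difference from Proposition \ref{proposition, necessity for Besov, 1} is that $V_\phi g_j$ occupies only an $O(1)$-sized frequency region, so that no factor $2^{jn/q}$ enters.

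For the assembly, given a truncated sequence of nonnegative reals $\{a_j\}$ I put $F_N=\sum_j a_jT_{Nje_0}g_j$ with $e_0=(1,0,\dots,0)$. Since the $\widehat{g_j}$ are pairwise disjointly supported, $\Delta_kF_N=a_kT_{Nke_0}g_k$ for every $N$, so $\|F_N\|_{B_{p,q}}=\|G\|_{L_p}\,\|\{a_j\}\|_{l_q^{n/p,1}}$, independently of $N$. On the Wiener side the pieces $V_\phi(T_{Nje_0}g_j)$ are disjointly supported in frequency (near $\xi_j$) and, once $N$ is large compared with the truncation length, essentially disjointly supported in space (near $Nje_0$, at scale $2^j$), so the same almost-orthogonality argument as in Proposition \ref{proposition, necessity for Besov, 1} gives
\[
\lim_{N\to\infty}\|F_N\|_{W_{p,q}^s}=\Big(\sum_j a_j^p\|g_j\|_{W_{p,q}^s}^p\Big)^{1/p}\sim\|\{a_j\}\|_{l_p^{s+n/p,1}},
\]
with suprema replacing the $\ell^p$-sums when $p=\infty$ and the obvious modifications when $q=\infty$. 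If now $W_{p,q}^s\subset B_{p,q}$ then $\|F_N\|_{B_{p,q}}\lesssim\|F_N\|_{W_{p,q}^s}$; letting $N\to\infty$ yields $\|\{a_j\}\|_{l_q^{n/p,1}}\lesssim\|\{a_j\}\|_{l_p^{s+n/p,1}}$ for every truncated nonnegative sequence, i.e. $l_p^{s+n/p,1}\subset l_q^{n/p,1}$, which is (1); part (2) follows symmetrically from $\|F_N\|_{W_{p,q}^s}\lesssim\|F_N\|_{B_{p,q}}$.

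I expect the only real obstacle to be the two-sided estimate $\|g_j\|_{W_{p,q}^s}\sim2^{j(s+n/p)}$: the upper bound is a routine (weighted) convolution estimate, but the lower bound calls for a fixed-size cell in $(x,\xi)$, uniform in $j$, on which $|V_\phi g_j|$ stays bounded below, and this rests on the slow variation of $G(2^{-j}\cdot)$ across the window's essential support together with $\widehat\phi(0)\neq0$. Everything else --- the frequency-support bookkeeping, the disjointness for $F_N$, and the translation of the resulting scalar inequality into an $\ell$-space embedding via Lemma \ref{lemma, Sharpness of embedding, for dyadic decomposition} --- proceeds exactly as in Proposition \ref{proposition, necessity for Besov, 1}.
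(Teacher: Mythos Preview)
Your proof is correct and follows the same overall template as the paper (build test functions, compute both norms, deduce the sequence-space embedding), but the actual construction is genuinely different. The paper uses, at level $j$, roughly $2^{jn}$ pure modulations $g_k(x)=e^{2\pi ik\cdot x}g(x)$ with $k$ ranging over the lattice points in the $j$-th dyadic annulus, and then spreads them out in space by translations $T_{Nk}$; the factor $2^{jn/p}$ in both norms arises from counting the $\sim 2^{jn}$ pieces, and the Wiener norm of each individual $g_k$ is read off painlessly from the sandwich $M_{p,\min\{p,q\}}^s\subset W_{p,q}^s\subset M_{p,\max\{p,q\}}^s$ (Lemma~\ref{lemma, embedding between modulation and wiener}). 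You instead use a single spatially dilated wave packet $g_j$ per level, so the $2^{jn/p}$ comes from the dilation; this keeps the combinatorics minimal and makes the time--frequency duality with Proposition~\ref{proposition, necessity for Besov, 1} transparent, but forces you to estimate $\|g_j\|_{W_{p,q}^s}$ by hand via the STFT, including the lower bound you rightly flag as the main obstacle. One small caveat there: the pointwise bound $|V_\phi g_j(x,\xi)|\gtrsim|G(2^{-j}x)|$ does not literally hold for \emph{all} $x$, since the Taylor error $O(2^{-j})$ can dominate where $G(2^{-j}x)$ is tiny; but it does hold on $\{|x|<2^jr_0\}$, where $|G|$ stays bounded below, and that region already carries the full $2^{jn/p}$ mass, so the conclusion $\|g_j\|_{W_{p,q}^s}\gtrsim 2^{j(s+n/p)}$ stands. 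Both routes are valid; the paper's is slicker on the Wiener side, yours on the bookkeeping.
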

\begin{proof}\
We only state the proof for $p\neq \infty$, the case $p=\infty$ can be handled similarly.
Take $g$ to be a smooth function with Fourier support in $B(0, 10^{-10})$.
Denote $\hat{g_k}(\xi)=\hat{g}(\xi-k)$ and
\begin{equation}
\Gamma_j=\{k\in \mathbb{Z}^n:~\Delta_jg_k=g_k \}.
\end{equation}
We have $\langle k\rangle \sim 2^j$ for $k\in \Gamma_j$,
and we also have $|\Gamma_j|\sim 2^{jn}$.
By Lemma\ref{lemma, embedding between modulation and wiener}, we have
\begin{equation}
  \|g_k\|_{M_{p,\max\{p,q\}}^s}\lesssim \|g_k\|_{W_{p,q}^s}\lesssim \|g_k\|_{M_{p,\min\{p,q\}}^s}.
\end{equation}
By the discrete definition of modulation space (see 6.2 in \cite{Wang_book}), we can deduce
$\|g_k\|_{M_{p,\max\{p,q\}}^s}\sim \langle k\rangle^{s}\|g_k\|_{L_p}\sim \|g_k\|_{M_{p,\min\{p,q\}}^s}$,
which yields
\begin{equation}
  \|g_k\|_{W_{p,q}^s}\sim \langle k\rangle^{s}\|g_k\|_{L_p}.
\end{equation}
Let $\{b_j\}_{j=0}^{\infty}$ be a truncated (only finite nonzero items) sequence of nonnegative real number,
$N$ is a sufficient large number to be chosen later.
Let $$G_N=\sum_{j=0}^{\infty}b_j\sum_{k\in \Gamma_j}T_{Nk}g_k.$$
By the almost orthogonality of $\{T_{Nk}g_k\}_{k\in \mathbb{Z}^n}$ as $N\rightarrow \infty$, we deduce
\begin{equation}
  \begin{split}
    \lim_{N\rightarrow \infty}\|G_N\|_{W_{p,q}^s}
    =&
    \lim_{N\rightarrow \infty}\|\sum_{j=0}^{\infty}b_j^p\sum_{k\in \Gamma_j}T_{Nk}g_k\|_{W_{p,q}^s}
    \\
    =&
    \lim_{N\rightarrow \infty}\left(\sum_{j=0}^{\infty}b_j^p\sum_{k\in \Gamma_j}\|T_{Nk}g_k\|^p_{W_{p,q}^s}\right)^{1/p}
    \\
    =&
    \lim_{N\rightarrow \infty}\left(\sum_{j=0}^{\infty}b_j^p\sum_{k\in \Gamma_j}2^{jsp}\|g_k\|^p_{L_p}\right)^{1/p}
    \sim
    \|\{b_j\}\|_{l_p^{s+n/p}}.
  \end{split}
\end{equation}
On the other hand, for every $j\in \mathbb{N}$, we have
\begin{equation}
  \begin{split}
    \lim_{N\rightarrow \infty}\|\Delta_jG_N\|_{L_p}
    =&
    \lim_{N\rightarrow \infty}\|\sum_{k\in \Gamma_j}b_jT_{Nk}g_k\|_{L_p}
    \\
    =&
    \lim_{N\rightarrow \infty}\left(\sum_{k\in \Gamma_j}\|b_jT_{Nk}g_k\|^p_{L_p}\right)^{1/p}
    \\
    =&
    \left(\sum_{k\in \Gamma_j}b_j^p\|g_k\|^p_{L_p}\right)^{1/p}
    \sim
    2^{jn/p}b_j.
  \end{split}
\end{equation}
Thus,
\begin{equation}
  \begin{split}
    \lim_{N\rightarrow \infty}\|G_N\|_{B_{p,q}}
    =&
    \lim_{N\rightarrow \infty}\|\{\|\Delta_jG_N\|_{L_p}\}_{j\in \mathbb{N}}\|_{l_q^{0,1}}
    \\
    \sim &
    \|\{2^{jn/p}b_j\}_{j\in \mathbb{N}}\|_{l_q^{0,1}}
    \sim
    \|\{b_j\}_{j\in \mathbb{N}}\|_{l_q^{n/p,1}}.
  \end{split}
\end{equation}
Combining with the above estimates, we use $W_{p,q}^s\subset B_{p,q}$ to deduce
\begin{equation}
  \|\{b_j\}\|_{l_q^{n/p,1}}\lesssim \|\{b_j\}\|_{l_p^{s+n/p}},
\end{equation}
which implies the desired embedding $l_p^{s+n/p}\subset l_q^{n/p,1}$.
Similarly, if $B_{p,q}\subset W_{p,q}^s$, we deduce the embedding $l_q^{n/p,1}\subset l_p^{s+n/p}$.
\end{proof}

Now, we can give the proof for the necessity of Theorem \ref{theorem, embedding between Wiener and Besov},
we will use Proposition \ref{proposition, forbidden for wiener by probability}, which will be independently verified in Section 5.

\medskip

\textbf{Necessity for $W_{p,q}^s\subset B_{p,q}$:} By Propositions \ref{proposition, necessity for Besov, 1} and \ref{proposition, necessity for Besov, 2}, we
deduce
$$l_p^{s+n/q,1}\subset l_q^{n(1-1/p),1},\hspace{6mm}l_p^{s+n/p,1}\subset l_q^{n/p,1}.$$
By Lemma \ref{lemma, Sharpness of embedding, for dyadic decomposition},
$l_p^{s+n/q,1}\subset l_q^{n(1-1/p),1}$ implies $s\geqslant n(1-1/p-1/q)$, where the inequality is strict if $1/q>1/p$.
Also, $l_p^{s+n/p,1}\subset l_q^{n/p,1}$ implies $s\geqslant 0$ and the inequality is strict if $1/q>1/p$.

In addition, $W_{p,q}^s\subset B_{p,q}$ implies $W_{p,q}^{s+\epsilon}\subset h_p$ for $0<p< \infty$, where $\epsilon$ is any fixed positive number.
Using Proposition \ref{proposition, forbidden for wiener by probability}, $W_{p,q}^{s+\epsilon}\subset h_p$ implies $l_{q}^{s+\epsilon,0}\subset l_2^{0,0}$
for $0<p\leqslant 2\leqslant q\leqslant \infty$.
Using Lemma \ref{lemma, Sharpness of embedding, for uniform decomposition}, $l_{q}^{s+\epsilon,0}\subset l_2^{0,0}$ implies $s+\epsilon \geqslant n(1/2-1/q)$.
Letting $\epsilon\rightarrow 0$, we deduce that $s \geqslant n(1/2-1/q)$.
By the above arguments, we actually verify that $W_{p,q}^s\subset B_{p,q}$ implies $s \geqslant n(1/2-1/q)$ for $0<p\leqslant 2\leqslant q\leqslant \infty$.

Combing with the above estimates, we obtain $s\geqslant \alpha(p,q)$ for $1/q\leqslant 1/p$, and $s> \alpha(p,q)$ for $1/q>1/p$, which is the desired conclusion.

\medskip

\textbf{Necessity for $B_{p,q}\subset W_{p,q}^s$:} By Propositions \ref{proposition, necessity for Besov, 1} and \ref{proposition, necessity for Besov, 2}, we
deduce
$$l_q^{n(1-1/p),1}\subset l_p^{s+n/q,1},\hspace{6mm}l_q^{n/p,1}\subset l_p^{s+n/p,1}.$$
By Lemma \ref{lemma, Sharpness of embedding, for dyadic decomposition},
$l_q^{n(1-1/p),1}\subset l_p^{s+n/q,1}$ implies $s\leqslant n(1-1/p-1/q)$, where the inequality is strict if $1/p>1/q$.
Also, $l_q^{n/p,1}\subset l_p^{s+n/p,1}$ implies $s\leqslant 0$ and the inequality is strict if $1/p>1/q$.

In addition, $B_{p,q}\subset W_{p,q}^s$ implies $h_p \subset W_{p,q}^{s-\epsilon}$ for $2\leqslant p< \infty$, where $\epsilon$ is any fixed positive number.
Using Proposition \ref{proposition, forbidden for wiener by probability}, $h_p \subset W_{p,q}^{s-\epsilon}$ implies $l_2^{0,0}\subset l_{q}^{s-\epsilon,0}$
for $1<p< \infty$.
Thus, $s-\epsilon \leqslant n(1/2-1/q)$ follows by Lemma \ref{lemma, Sharpness of embedding, for uniform decomposition}.
Letting $\epsilon\rightarrow 0$, we deduce that $s \leqslant n(1/2-1/q)$.
By the above arguments, we actually verify that $B_{p,q}\subset W_{p,q}^s$ implies $s \leqslant n(1/2-1/q)$ for $2\leqslant p<\infty$.
By an interpolation with $B_{2,q}\subset W_{2,q}^{n(1/2-1/q)}$, we can use a contradiction argument to show that $B_{\infty,q}\subset W_{\infty,q}^s$ implies $s \leqslant n(1/2-1/q)$.

Combing with the above estimates, we obtain $s\leqslant \alpha(p,q)$ for $1/p\leqslant 1/q$, and $s< \alpha(p,q)$ for $1/p>1/q$, which is the desired conclusion.

\section{Inclusion relations between $W_{p,q}^s$ and $h_p$}

In this section, we will give the proof for Theorem \ref{theorem, embedding between Wiener and hp}.
By the feature of dyadic decomposition, if we ignore all the critical cases,
Theorem \ref{theorem, embedding between Wiener and Besov} and Theorem \ref{theorem, embedding between Wiener and hp} has no essential difference.
Nevertheless, these two theorems are quite different in the critical cases $s=\alpha(p,q)$ and $s=\beta(p,q)$.
For instance, for $0<p<\infty,\, p< q$, Theorem \ref{theorem, embedding between Wiener and Besov} says that the embedding relation
\begin{equation}
  B_{p,q}\subset W_{p,q}^{\beta(p,q)}
\end{equation}
is negative. However, $h_p\subset W_{p,q}^{\beta(p,q)}$ is an immediate conclusion of Theorem \ref{theorem, embedding between Wiener and hp}.

\subsection{Sufficiency}

\begin{proposition}\label{proposition, atom estimate}
Let $0<p<\infty$, $0<q\leqslant \infty$ satisfy one of the following conditions
\begin{equation}\label{for proof, theorem Lp, 4}
  (i).~ p<1, ~0<1/q\leqslant 1/p;\quad~(ii).~ p\leqslant 1, ~q=\infty.
\end{equation}
Then, we have
\begin{equation}
\|a\|_{W_{p,q}^{n(1-1/p-1/q)}}\lesssim 1
\end{equation}
for all $h_p$-atoms $a$.
\end{proposition}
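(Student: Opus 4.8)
The plan is to estimate $\|a\|_{W_{p,q}^{n(1-1/p-1/q)}}$ separately for big and small $h_p$-atoms, reducing in both cases to the local description of the Wiener amalgam norm. By the localization principle (Proposition \ref{lemma, Localization principle for Wiener amalgam space}) together with Lemma \ref{lemma, equivalent norm, compact support}, when $a$ is supported in a cube $Q$ of side length $\ell$ centered at $x_0$, the norm $\|a\|_{W_{p,q}^s}$ is comparable (up to a constant depending on $\ell$, which we must track carefully) to $\|\widehat{a}\|_{L_q^s}$. So the first step is to make the $Q$-dependence of the constant in Lemma \ref{lemma, equivalent norm, compact support} explicit by a dilation argument: writing $a_\ell(x) = a(\ell x + x_0)$ reduces a cube of size $\ell$ to the unit cube, and one reads off how the Wiener norm and the $L_q^s$-norm of the Fourier transform scale under this change of variables. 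This is where the exponent $n(1-1/p-1/q)$ gets pinned down.

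Next I would treat the \textbf{big atoms}: here $|Q| \geqslant 1$, $\mathrm{supp}\,a \subset Q$, $\|a\|_{L_\infty} \leqslant |Q|^{-1/p}$, and there is no moment condition. The quickest route is $\|a\|_{W_{p,q}^s} \lesssim \|a\|_{M_{p,\min\{p,q\}}^s}$ by Lemma \ref{lemma, embedding between modulation and wiener}, but more directly one estimates $\widehat{a}$: since $a$ is bounded with bounded support, $\|\widehat{a}\|_{L_\infty} \leqslant \|a\|_{L_1} \leqslant |Q|^{1-1/p}$, while the decay of $\widehat{a}$ away from scale $|Q|^{-1/n}$ need not be fast (no cancellation), so one uses instead the $L_2$ bound $\|\widehat{a}\|_{L_2} = \|a\|_{L_2} \leqslant |Q|^{1/2-1/p}$ on the frequency ball of radius $\sim |Q|^{-1/n}$ and handles the tail by integrating $\langle \xi \rangle^{sq}$ over $|\xi| \gtrsim |Q|^{-1/n}$; the condition $1/q \leqslant 1/p$ (equivalently $s = n(1-1/p-1/q) \geqslant n(1-2/p)$ is not what is needed — rather $s + n/q = n(1-1/p) \leqslant 0$ when $p \leqslant 1$) guarantees the weight does not blow up the small-frequency contribution. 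The bound $\lesssim 1$ then follows after checking the $|Q|$-powers cancel, using $|Q| \geqslant 1$ to absorb any favorable sign.

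Then the \textbf{small atoms}, which is the substantive case: $|Q| < 1$, $\|a\|_{L_\infty} \leqslant |Q|^{-1/p}$, and $a$ has vanishing moments up to order $[n(1/p-1)]$. After the dilation to the unit cube the moment conditions become vanishing moments of the rescaled atom, and the corresponding statement is that $\|b\|_{W_{p,q}^{n(1-1/p-1/q)}} \lesssim 1$ for $b$ supported in a fixed cube with $\|b\|_\infty \lesssim 1$ and enough vanishing moments. By Lemma \ref{lemma, equivalent norm, compact support} this is $\|\widehat{b}\|_{L_q^{n(1-1/p-1/q)}} \lesssim 1$. The moment conditions give $|\widehat{b}(\xi)| \lesssim |\xi|^{[n(1/p-1)]+1} \lesssim |\xi|^{n(1/p-1)}$ for $|\xi| \leqslant 1$ (using $[n(1/p-1)]+1 > n(1/p-1)$), while $\|\widehat{b}\|_{L_\infty} \lesssim 1$ and $\|\widehat{b}\|_{L_2} = \|b\|_{L_2} \lesssim 1$ control the high frequencies — for $q \leqslant 2$ one interpolates/Hölders on dyadic frequency shells $|\xi| \sim 2^j$ using that $\widehat{b}$ is band-essentially concentrated (it is analytic with controlled growth), and for $q = \infty$ one just uses the $L_\infty$ bound with the weight. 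The key computation is then
\[
\|\widehat{b}\|_{L_q^{n(1-1/p-1/q)}}^q \lesssim \int_{|\xi|\leqslant 1} |\xi|^{n(1/p-1)q}\langle\xi\rangle^{n(1-1/p-1/q)q}\,d\xi + \sum_{j\geqslant 0} 2^{jn(1-1/p-1/q)q}\,\|\widehat{b}\|_{L_q(|\xi|\sim 2^j)}^q,
\]
where the low-frequency integral converges precisely because $n(1/p-1)q + n > 0$ (automatic) and the high-frequency sum converges because $1 - 1/p - 1/q < 0$ under hypothesis $(i)$ or $(ii)$ (when $p \leqslant 1$ and $1/q \leqslant 1/p$, or $p \leqslant 1$, $q = \infty$), combined with a uniform bound like $\|\widehat{b}\|_{L_q(|\xi|\sim 2^j)} \lesssim 2^{jn(1/q - 1/2)}\|\widehat{b}\|_{L_2(|\xi|\sim 2^j)} \lesssim 2^{jn(1/q-1/2)} 2^{-jM}$ coming from the rapid decay of $\widehat{b}$ at dyadic scales (since $b$ is Schwartz-like with a fixed compact support, though this last point — getting honest decay of $\widehat b$ rather than just the $L_2$ global bound — requires $b$ to be smooth, so one should first regularize the atom or argue that it suffices to prove the estimate for smooth atoms and pass to the limit).

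The main obstacle I anticipate is twofold: (a) keeping the cube-size constants honest through the dilation in Lemma \ref{lemma, equivalent norm, compact support} — the constant there depends on the support, so one genuinely needs the scaling lemma, and getting the exponent of $|Q|$ to come out to exactly $n(1-1/p-1/q)$ in the weight is the heart of the matter; and (b) the high-frequency tail of $\widehat{a}$ for atoms that are merely bounded (not smooth), where one has no pointwise decay — here one must either invoke the $L_2$-based Plancherel estimate shell by shell (which is lossy in $q$ when $q < 2$ but survives because of the favorable sign $1-1/p-1/q<0$), or reduce to smooth atoms by density. I expect (b) is what forces the precise hypotheses $(i)$ and $(ii)$ on $(p,q)$.
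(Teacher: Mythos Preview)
Your approach via Lemma \ref{lemma, equivalent norm, compact support} plus a dilation argument has a genuine gap: Wiener amalgam spaces do \emph{not} scale by power laws under $f\mapsto f(\lambda\cdot)$. This is one of the features that distinguishes $W_{p,q}^s$ (and $M_{p,q}^s$) from Besov/Triebel--Lizorkin spaces; the built-in unit scale of the short-time Fourier transform means that dilating the atom also effectively dilates the window, and equivalence constants between different windows are not uniform. So you cannot simply ``read off how the Wiener norm scales under this change of variables'' to reduce an atom on a cube $Q$ of side $\ell$ to an atom on the unit cube. The constant in Lemma \ref{lemma, equivalent norm, compact support} genuinely depends on the support in a way that a dilation cannot untangle; for big atoms the naive constant grows like $|Q|^{1/p}$, which is not compensated by the estimate $\|\widehat a\|_{L_q^s}\lesssim |Q|^{1-1/p}$.

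The paper avoids this entirely by working directly with the STFT definition and splitting in the \emph{spatial} variable: write
\[
\|a\|_{W_{p,q}^s}\lesssim \big\|\|V_\phi a(x,\xi)\|_{L^s_{\xi,q}}\chi_{Q^*}(x)\big\|_{L_{x,p}}
+\big\|\|V_\phi a(x,\xi)\|_{L^s_{\xi,q}}\chi_{(Q^*)^c}(x)\big\|_{L_{x,p}},
\]
where $Q^*$ is a fixed dilate of $Q$. On $Q^*$ the crude bound $|V_\phi a|\leqslant\|a\|_{L_\infty}\|\phi\|_{L_1}\leqslant|Q|^{-1/p}$ suffices, because $\langle\xi\rangle^{n(1-1/p-1/q)}\in L_q$ precisely under the hypotheses $(i)$--$(ii)$, and the factor $|Q^*|^{1/p}$ from the $x$-integration cancels $|Q|^{-1/p}$. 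Off $Q^*$ the rapid decay comes from the \emph{window} $\phi$, giving $|V_\phi a(x,\xi)|\lesssim\langle x\rangle^{-\mathscr L}|Q|^{1-1/p}$; for small atoms one first Taylor-expands $\overline\phi(y-x)e^{-2\pi iy\xi}$ and uses the moment conditions to gain the extra factor $(l(Q)\langle\xi\rangle)^{N+1}$, then splits the $\xi$-integral at $\langle\xi\rangle\sim 1/l(Q)$.

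A secondary remark: your concern (b) about needing smooth atoms for high-frequency decay of $\widehat a$ is a red herring. Even in a pure Fourier-side argument the crude bound $|\widehat a(\xi)|\leqslant\|a\|_{L_1}\leqslant|Q|^{1-1/p}$ is enough at high frequency, because $\langle\xi\rangle^{sq}$ is already integrable ($sq+n=nq(1-1/p)<0$ when $p<1$); no decay of $\widehat a$ is required. In the paper's approach the issue never arises, since the decay in $x$ comes from $\phi\in\mathscr S$, not from $a$.
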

\begin{proof}
We first take $a$ to be an $h_p$ atom, and without loss of generality we may assume tha  $a$ is supported in a cube $Q$ centered at the origin.
We let $Q^{\ast}$ be the cube with side length $2\sqrt{n}l(Q)$ having the same center as $Q$, where the $l(Q)$ is the side length of $Q$.
By the definition of Wiener amalgam space, we have
\begin{equation}
  \begin{split}
      \|a\|_{W_{p,q}^{n(1-1/p-1/q)}}
      =&
      \big\|\|V_{\phi}f(x,\xi)\|_{L_{\xi, q}^{n(1-1/p-1/q)}}\big\|_{L_{x,p}}
      \\
      \lesssim&
      \big\|\|V_{\phi}f(x,\xi)\|_{L_{\xi, q}^{n(1-1/p-1/q)}}\chi_{Q^{\ast}}\big\|_{L_{x,p}}
      \\
      \qquad+& \big\|\|V_{\phi}f(x,\xi)\|_{L_{\xi, q}^{n(1-1/p-1/q)}}\chi_{(Q^{\ast})^c}\big\|_{L_{x,p}}
      \\
      :=& E+F.
  \end{split}
\end{equation}
By the definition of $h_p$-atom, we deduce
\begin{equation}
  \begin{split}
    |V_{\phi}f(x,\xi)|
    =&
    \left|\int_{\mathbb{R}^n}a(y)\bar{\phi}(y-x)e^{-2\pi iy\cdot \xi}dy\right|
    \\
    \lesssim &
    \|a\|_{L_{\infty}}\|\phi\|_{L_1}\lesssim |Q|^{-1/p}.
  \end{split}
\end{equation}
Thus,
\begin{equation}
  \begin{split}
    \|V_{\phi}f(x,\xi)\|_{L_{\xi, q}^{n(1-1/p-1/q)}}
    \lesssim &
    |Q|^{-1/p}\|\langle \cdot\rangle^{n(1-1/p-1/q)}\|_{L_q}
    \\
    \lesssim &
    |Q|^{-1/p}.
  \end{split}
\end{equation}
So, we have the estimate of term E:
\begin{equation}
  \begin{split}
    E=& \big\|\|V_{\phi}f(x,\xi)\|_{L_{\xi, q}^{n(1-1/p-1/q)}}\chi_{Q^{\ast}}\big\|_{L_{x,p}}
    \\
    \lesssim&
     |Q^{\ast}|^{1/p}\sup_{x\in \chi_{Q^{\ast}}}\|V_{\phi}f(x,\xi)\|_{L_{\xi, q}^{n(1-1/p-1/q)}}\lesssim 1.
  \end{split}
\end{equation}

Next, we turn to the estimate of term F. We first estimate $|V_{\phi}f(x,\xi)|$. Recalling $a$ is supported in $Q$, we have
\begin{equation}
  \begin{split}
    |V_{\phi}f(x,\xi)|
    =&
    \left|\int_{Q}a(y)\bar{\phi}(y-x)e^{-2\pi iy\cdot \xi}dy\right|
    \\
    \lesssim &
    \sup_{y\in Q}|\phi(y-x)|\int_{Q}|a(y)|dy
    \\
    \lesssim &
    \sup_{y\in Q}\langle x-y\rangle^{-\mathscr{L}}\|a\|_{L_1}
    \\
    \lesssim &
    \langle x\rangle^{-\mathscr{L}}|Q|^{1-1/p}
  \end{split}
\end{equation}
for $|\beta|=N+1$, $x\in (Q^{\ast})^c$.
Specially, for small $h_p$-atom, we can get an additional estimate of $|V_{\phi}f(x,\xi)|$ based on the cancelation of order $N=[n(1/p-1)]$.
Writing $\theta_{\xi}(x)=\bar{\phi}(x)e^{-2\pi ix\cdot \xi}$, we use the Taylor formula to deduce
\begin{equation}
  \begin{split}
    |V_{\phi}f(x,\xi)|
    =&
    \left|\int_{Q}a(y)\bar{\phi}(y-x)e^{-2\pi i(y-x)\cdot \xi}dy\right|
    \\
    = &
    \left|\int_{Q}a(y)\theta_{\xi}(y-x)dy\right|
    \\
    = &
    \int_{Q}\left(\theta_{\xi}(y-x)-\sum_{|\beta|\leqslant N}\frac{\partial^{\beta}(\theta_{\xi})(-x)}{|\beta|!}(y)^{\beta}\right)a(y)dy
    \\
    = &
    \int_{Q}(N+1)\left(\sum_{|\beta|=N+1}\frac{(y)^{\beta}}{|\beta|!}\int_0^1(1-t)^N(\partial^{\beta}\theta_{\xi})(-x+ty)dt\right)a(y)dy.
  \end{split}
\end{equation}
Recalling $\theta_{\xi}(x)=\bar{\phi}(x)e^{-2\pi ix\cdot \xi}$, $if |\beta|=N+1$
we deduce
\begin{equation}
  \begin{split}
    |(\partial^{\beta}\theta_{\xi})(-x+ty)|
    \lesssim &
    \langle \xi\rangle^{N+1}\langle x-ty\rangle^{-\mathscr{L}}\lesssim \langle \xi\rangle^{N+1}\langle x\rangle^{-\mathscr{L}}
  \end{split}
\end{equation}
for $x\in (Q^{\ast})^c$, $y\in Q$ and $t\in [0,1]$.
Thus, we obtain
\begin{equation}
  \begin{split}
    |V_{\phi}f(x,\xi)|
    \lesssim &
    \langle \xi\rangle^{N+1}\langle x\rangle^{-\mathscr{L}}\int_{Q}|y|^{N+1}|a(y)|dy
    \\
    \lesssim &
    \langle \xi\rangle^{N+1}\langle x\rangle^{-\mathscr{L}}|l(Q)|^{N+1}\cdot |Q|^{1-1/p}
    \\
    =&
    (l(Q)\langle \xi\rangle)^{N+1}|Q|^{1-1/p}\langle x\rangle^{-\mathscr{L}}
  \end{split}
\end{equation}
for all $x\in (Q^{\ast})^c$.
 Now, we are ready to give the estimate of term F. For big $h_p$-atom $a$, we have
\begin{equation}
  \begin{split}
    F=&
    \big\|\|V_{\phi}f(x,\xi)\|_{L_{\xi, q}^{n(1-1/p-1/q)}}\chi_{(Q^{\ast})^c}\big\|_{L_{x,p}}
    \\
    \lesssim &
    \big\|\|\langle x\rangle^{-\mathscr{L}}|Q|^{1-1/p}\|_{L_{\xi, q}^{n(1-1/p-1/q)}}\chi_{(Q^{\ast})^c}\big\|_{L_{x,p}}
    \\
    \lesssim &
    \big\|\langle x\rangle^{-\mathscr{L}}|Q|^{1-1/p}\chi_{(Q^{\ast})^c}\big\|_{L_{x,p}}\lesssim |Q|^{1-1/p}\lesssim 1,
  \end{split}
\end{equation}
where the last inequality we use the fact $|Q|\geqslant 1$.
For small $h_p$-atom $a$ supported in $Q$, we have
\begin{equation}
  \begin{split}
    \|V_{\phi}f(x,\xi)\|_{L_{\xi, q}^{n(1-1/p-1/q)}}
    =&
    \|V_{\phi}f(x,\xi)\chi_{l(Q)\langle \xi\rangle<1}(\xi)\|_{L_{\xi, q}^{n(1-1/p-1/q)}}
    \\
    +&
    \|V_{\phi}f(x,\xi)\chi_{l(Q)\langle \xi\rangle\geqslant 1}(\xi)\|_{L_{\xi, q}^{n(1-1/p-1/q)}}
    \\
    \lesssim &
    \|(l(Q)\langle \xi\rangle)^{N+1}|Q|^{1-1/p}\langle x\rangle^{-\mathscr{L}}\chi_{l(Q)\langle \xi\rangle<1}(\xi)\|_{L_{\xi, q}^{n(1-1/p-1/q)}}
    \\
    +&
    \||Q|^{1-1/p}\langle x\rangle^{-\mathscr{L}}\chi_{l(Q)\langle \xi\rangle\geqslant 1}(\xi)\|_{L_{\xi, q}^{n(1-1/p-1/q)}}
    \\
    \lesssim &
    \langle x\rangle^{-\mathscr{L}}
  \end{split}
\end{equation}
for $x\in (Q^{\ast})^c$. We obtain
\begin{equation}
  \begin{split}
    F=
    \big\|\|V_{\phi}f(x,\xi)\|_{L_{\xi, q}^{n(1-1/p-1/q)}}\chi_{(Q^{\ast})^c}\big\|_{L_{x,p}}
    \lesssim
    \|\langle x\rangle^{-\mathscr{L}}\|_{L_p}\lesssim 1.
  \end{split}
\end{equation}
Combining with the estimates of terms E and F, we have the desired conclusion that
\begin{equation}
  \|a\|_{W_{p,q}^{n(1-1/p-1/q)}}
  \lesssim
  E+F
  \lesssim 1
\end{equation}
for any $h_p$-atom $a$.
\end{proof}

\textbf{Sufficiency for $W^s_{p,q}\subset h_p$}

\medskip

\textbf{(i). For $1/q\geqslant 1/p\wedge 1/2$:} If $0<1/p=1/q\leqslant 1/2$, we invoke previous result in \cite{sobolev and modulation} to deduce
\begin{equation}\label{for proof, theorem Lp, 1}
W_{p,p}^{n(1-2/p)}=M_{p,p}^{n(1-2/p)}\subset L_p.
\end{equation}

On the other hand, we use Hausdorff-Young inequality $\|f\|_{L_p}\lesssim \|\hat{f}\|_{L_{p'}}$
and Proposition \ref{proposition, mild characterization} to obtain the embedding relation
\begin{equation}\label{for proof, theorem Lp, 2}
  W_{p,p'}\subset h_p
\end{equation}
for $0<1/p\leqslant 1/2$.
Then, by interpolation, (\ref{for proof, theorem Lp, 1}) and (\ref{for proof, theorem Lp, 2})
gives $W_{p,q}^{\alpha(p,q)}=W_{p,q}^{n(1-1/p-1/q)}\subset L_p$ for the zone $0<1/p\leqslant 1/2, 1/p\leqslant 1/q \leqslant 1-1/p$.

Finally, for any $1/q\geqslant (1-1/p)\vee 1/2$, denote $1/\tilde{q}=(1-1/p)\vee 1/2$, the embedding (\ref{for proof, theorem Lp, 2}) and Proposition \ref{proposition, Littlewood-paley type inequality}
then yield
\begin{equation}
  W_{p,q}^{\alpha(p,q)}=W_{p,q}\subset W_{p,\tilde{q}}\subset h_p.
\end{equation}

\medskip

\textbf{(ii). For $1/q<1/p\wedge 1/2$:} In this case, we recall $W_{p,q}^{\alpha(p,q)}\subset B_{p,q}$. Thus, the desired conclusion follows by
\begin{equation}
  W_{p,q}^{\alpha(p,q)+\epsilon}\subset B_{p,q}^{\epsilon}\subset h_p
\end{equation}
for any $\epsilon>0$.

\bigskip

\textbf{Sufficiency for $h_p\subset W_{p,q}^s$}

\medskip

\textbf{(i). For $1/q\leqslant 1/p\vee 1/2$:} Taking a fixed $f\in h_p (p\leqslant 1)$, we can find a collection of $h_p$-atoms $\{a_j\}_{j=1}^{\infty}$ and a sequence of complex numbers $\{\lambda_j\}_{j=1}^{\infty}$
which is depend on $f$,
such that $f=\sum_{j=1}^{\infty}\lambda_ja_j$ and
\begin{equation}
  \left(\sum_{j=1}^{\infty}|\lambda|^p\right)^{1/p}\leqslant C\|f\|_{h_p},
\end{equation}
where the constant $C$ is independent of $f$.
Using Proposition \ref{proposition, atom estimate}, we deduce
\begin{equation}
  \begin{split}
    \|f\|_{W_{p,q}^{n(1-1/p-1/q)}}
    = &
    \|\sum_{j=1}^{\infty}\lambda_ja_j\|_{W_{p,q}^{n(1-1/p-1/q)}}
    \\
    \lesssim &
    \left(\sum_{j=1}^{\infty}\|\lambda_ja_j\|^p_{W_{p,q}^{n(1-1/p-1/q)}}\right)^{1/p}
    \\
    \lesssim &
    \left(\sum_{j=1}^{\infty}|\lambda_j|^p\right)^{1/p}\lesssim \|f\|_{h_p}
  \end{split}
\end{equation}
where $p,q$ satisfy the conditions in (\ref{for proof, theorem Lp, 4}).

Obviously, we also have $h_2\subset W_{2,2}$.
Thus, by interpolation, we deduce that
\begin{equation}\label{for proof, theorem Lp, 7}
h_p \subset W_{p,q}^{n(1-1/p-1/q)}=W_{p,q}^{\beta(p,q)}\ \   \text{for}\ \  1/p \geqslant (1-1/q)\vee 1/q.
\end{equation}
Particularly, we have
\begin{equation}\label{for proof, theorem Lp, 5}
  h_p \subset W_{p,p'}
\end{equation}
for $1\leqslant p\leqslant 2$.
On the other hand, by the duality of Proposition \ref{proposition, Littlewood-paley type inequality}, we obtain
\begin{equation}\label{for proof, theorem Lp, 6}
  h_p \subset W_{p,2}
\end{equation}
for $p\in [2,\infty)$.

Finally, for any $0<1/p\leqslant 1-1/q, 1/q\leqslant 1/2$, we can find a $\tilde{q}$ such that $1/\tilde{q}=1/2\wedge (1-1/p)$.
Combining with (\ref{for proof, theorem Lp, 5}) and (\ref{for proof, theorem Lp, 6}), we deduce
\begin{equation}\label{for proof, theorem Lp, 8}
  h_p\subset W_{p,\tilde{q}}\subset W_{p,q}=W_{p,q}^{\beta(p,q)}
\end{equation}
for $0<1/p\leqslant 1-1/q, 1/q\leqslant 1/2$.
Combining with (\ref{for proof, theorem Lp, 7}) and (\ref{for proof, theorem Lp, 8}), we obtain the desired conclusion.

\medskip

\textbf{(ii). For $1/q> 1/p\vee 1/2$:} In this case, we recall $B_{p,q} \subset W_{p,q}^{\beta(p,q)}$. Thus, the desired conclusion follows by
\begin{equation}
  h_p\subset B_{p,q}^{-\epsilon} \subset W_{p,q}^{\beta(p,q)-\epsilon}
\end{equation}
for any $\epsilon>0$.

\subsection{Necessity}

\begin{proposition}\label{proposition, forbidden for wiener}
Let $0<p<\infty$, $0<q\leqslant \infty$.
Then we have
\begin{enumerate}
  \item
  $W_{p,q}^s\subset h_p \Longrightarrow l_q^{s+n/q,1} \subset l_p^{n(1-1/p),1}$ for $2\leqslant p<\infty$,
  \item
  $h_p\subset W_{p,q}^s \Longrightarrow l_p^{n(1-1/p),1} \subset l_q^{s+n/q,1}$ for $0<p\leqslant 2$.
\end{enumerate}
Moreover, the statement $(1)$ is true if $p=\infty$ and $h_p$ is replaced by $L_{\infty}$,
the statement $(2)$ is true if $p=1$ and $h_p$ is replaced by $L_{1}$.
\end{proposition}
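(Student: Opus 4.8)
The plan is to test both inclusions on a single family of functions, chosen so that the Wiener amalgam norm decouples the dyadic blocks in an $l_q$-fashion while the local Hardy norm decouples them in an $l_p$-fashion. Fix a window $\phi\in\mathscr{S}$ with $\widehat{\phi}\geqslant 0$ supported in $B(0,10^{-10})$ (so $\phi(0)>0$), and fix $h\in\mathscr{S}$ with $\widehat{h}\geqslant 0$ supported in $\{3/4\leqslant|\xi|\leqslant 4/3\}$ and $\widehat{h}\equiv 1$ on $\{7/8\leqslant|\xi|\leqslant 8/7\}$; then $h(0)=\int\widehat{h}>0$, and if $h_j(x)=2^{jn}h(2^jx)$ (so $\widehat{h_j}(\xi)=\widehat{h}(\xi/2^j)$) the frequency supports are separated enough that $\Delta_kh_j=\delta_{jk}h_j$ for all $j,k\in\mathbb{N}$. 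For a finitely supported sequence of nonnegative reals $\vec{a}=\{a_j\}_{j\in\mathbb{N}}$ set $F=\sum_ja_jh_j\in\mathscr{S}$. The heart of the argument is the set of estimates
\begin{gather*}
  \|F\|_{W_{p,q}^s}\sim\|\vec{a}\|_{l_q^{s+n/q,1}}, \\
  \|F\|_{h_p}\gtrsim\|\vec{a}\|_{l_p^{n(1-1/p),1}}\quad(2\leqslant p<\infty), \\
  \|F\|_{h_p}\lesssim\|\vec{a}\|_{l_p^{n(1-1/p),1}}\quad(0<p\leqslant 2),
\end{gather*}
with all constants independent of $\vec{a}$. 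Granting these, $W_{p,q}^s\subset h_p$ forces $\|\vec{a}\|_{l_p^{n(1-1/p),1}}\lesssim\|F\|_{h_p}\lesssim\|F\|_{W_{p,q}^s}\sim\|\vec{a}\|_{l_q^{s+n/q,1}}$, and $h_p\subset W_{p,q}^s$ forces $\|\vec{a}\|_{l_q^{s+n/q,1}}\sim\|F\|_{W_{p,q}^s}\lesssim\|F\|_{h_p}\lesssim\|\vec{a}\|_{l_p^{n(1-1/p),1}}$; since an inclusion between the dyadic sequence spaces can be tested on finitely supported nonnegative sequences, these are exactly the asserted implications.

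For the first estimate, the point is that for each fixed $x$ the functions $\{V_\phi h_j(x,\cdot)\}_j$ have pairwise disjoint supports in $\xi$, because $V_\phi h_j(x,\cdot)=\widehat{h_j}\ast\mathscr{F}(\overline{T_x\phi})$ is supported in a slight fattening of the annulus $\{|\xi|\sim 2^j\}$ and $\widehat{\phi}$ has tiny support. Hence $\|V_\phi F(x,\cdot)\|_{L_q^s}^q=\sum_ja_j^q\|V_\phi h_j(x,\cdot)\|_{L_q^s}^q$ (with the usual modification when $q=\infty$). Exactly as in the proof of Proposition~\ref{proposition, necessity for Besov, 1} one has $|V_\phi h_j(x,\xi)|\lesssim\langle x\rangle^{-\mathscr{L}}$ uniformly in $j$, while $|V_\phi h_j(x,\xi)|=|\phi(-x)|$ for $\xi$ in a set $E_j$ of measure $\sim 2^{jn}$ on which $\langle\xi\rangle\sim 2^j$; combined with $|\{\xi:V_\phi h_j(x,\xi)\neq 0\}|\sim 2^{jn}$ this yields $|\phi(-x)|\,2^{j(s+n/q)}\lesssim\|V_\phi h_j(x,\cdot)\|_{L_q^s}\lesssim\langle x\rangle^{-\mathscr{L}}\,2^{j(s+n/q)}$ uniformly in $j$. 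Summing over $j$ and taking the $L_{x,p}$ norm, using that $\phi$ and $\langle\cdot\rangle^{-\mathscr{L}}$ both have finite nonzero $L_p$ norm for $\mathscr{L}$ large, gives $\|F\|_{W_{p,q}^s}\sim\|\vec{a}\|_{l_q^{s+n/q,1}}$.

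The lower bound on $\|F\|_{h_p}$ for $2\leqslant p<\infty$ is the step I expect to be the main obstacle, since one must recover the full $\ell^p$-sum from a square function; I would do this by localizing in \emph{physical} space rather than in frequency. Using $h_p\sim F_{p,2}$ and $\Delta_kF=a_kh_k$ one has $\|F\|_{h_p}\sim\big\|(\sum_ka_k^2|h_k|^2)^{1/2}\big\|_{L_p}$. Fix a large integer $M$ with $|h(y)|\geqslant\tfrac12|h(0)|$ for $|y|\leqslant 2^{-M}$; on the dyadic annulus $A_k=\{2^{-k-1}\leqslant|x|\leqslant 2^{-k}\}$ one has $|2^{k-M}x|\leqslant 2^{-M}$, hence $|h_{k-M}(x)|\geqslant\tfrac12|h(0)|\,2^{(k-M)n}\gtrsim_M 2^{kn}$, so $(\sum_ja_j^2|h_j(x)|^2)^{1/2}\geqslant a_{k-M}|h_{k-M}(x)|\gtrsim_M a_{k-M}2^{kn}$ there. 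Since the $A_k$ are disjoint with $|A_k|\sim 2^{-kn}$, summing $\int_{A_k}(\sum_ja_j^2|h_j|^2)^{p/2}\gtrsim_M a_{k-M}^p\,2^{knp}\,2^{-kn}$ over $k\geqslant M$ and reindexing gives $\|F\|_{h_p}^p\gtrsim_M\sum_la_l^p\,2^{ln(p-1)}=\|\vec{a}\|_{l_p^{n(1-1/p),1}}^p$. For the upper bound when $0<p\leqslant 2$, the inclusion $l_p\hookrightarrow l_2$, applied to the sequence $\{a_k|h_k(x)|\}_k$ for each fixed $x$, gives $(\sum_ka_k^2|h_k(x)|^2)^{1/2}\leqslant(\sum_ka_k^p|h_k(x)|^p)^{1/p}$, hence $\|F\|_{h_p}^p\lesssim\sum_ka_k^p\|h_k\|_{L_p}^p\sim\sum_ka_k^p\,2^{kn(1-1/p)p}=\|\vec{a}\|_{l_p^{n(1-1/p),1}}^p$.

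The endpoint cases use the same scheme with the square-function step replaced by an elementary one. For $p=\infty$ (and $L_\infty$ in place of $h_p$) one uses $\|F\|_{L_\infty}\geqslant|F(0)|=h(0)\sum_ja_j2^{jn}\gtrsim\|\vec{a}\|_{l_\infty^{n,1}}$ in the role of the $h_p$ lower bound, which together with $\|F\|_{W_{\infty,q}^s}\sim\|\vec{a}\|_{l_q^{s+n/q,1}}$ (the first estimate, valid verbatim at $p=\infty$) yields $W_{\infty,q}^s\subset L_\infty\Rightarrow l_q^{s+n/q,1}\subset l_\infty^{n,1}$. For $p=1$ (and $L_1$ in place of $h_1$) one uses $\|F\|_{L_1}\leqslant\sum_ja_j\|h_j\|_{L_1}=\|h\|_{L_1}\|\vec{a}\|_{l_1^{0,1}}$ in the role of the $h_p$ upper bound, yielding $L_1\subset W_{1,q}^s\Rightarrow l_1^{0,1}\subset l_q^{s+n/q,1}$. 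Throughout, the only delicate point is that $h$ must simultaneously satisfy the frequency-localization needed for $\Delta_kh_j=\delta_{jk}h_j$ and the disjointness of the $\xi$-supports, and the nonvanishing $h(0)\neq 0$; both are ensured by taking $\widehat{h}$ a nonnegative plateau bump.
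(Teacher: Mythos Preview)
Your argument is correct and uses the same test family $F=\sum_j a_j h_j$ as the paper, with the same disjoint-$\xi$-support computation of $\|F\|_{W_{p,q}^s}$. The only substantive difference is in the $h_p$ estimates. The paper obtains the lower bound for $2\leqslant p<\infty$ in one line via the embedding
\[
h_p\sim F_{p,2}\subset F_{p,p}=B_{p,p},
\]
so that $\|F\|_{h_p}\gtrsim\|F\|_{B_{p,p}}=\big(\sum_j a_j^p\|h_j\|_{L_p}^p\big)^{1/p}\sim\|\vec a\|_{l_p^{n(1-1/p),1}}$; by the dual embedding $B_{p,p}=F_{p,p}\subset F_{p,2}\sim h_p$ for $0<p\leqslant 2$ one gets the upper bound for part (2) equally cheaply. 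Your dyadic-annulus argument and the $\ell_p\hookrightarrow\ell_2$ bound achieve the same conclusions by a more hands-on route; they are valid but longer, and you might note the $F_{p,2}\leftrightarrow B_{p,p}$ shortcut as an alternative. Your two-sided estimate $\|F\|_{W_{p,q}^s}\sim\|\vec a\|_{l_q^{s+n/q,1}}$ (borrowed from Proposition~\ref{proposition, necessity for Besov, 1}) is exactly what is needed to cover both implications at once, whereas the paper only writes out the upper bound and leaves the symmetric case to the reader. Your treatment of the $L_\infty$ and $L_1$ endpoints via $|F(0)|$ and the triangle inequality is the natural adaptation.
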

\begin{proof}
We only give the proof for statement (1), since the other cases can be verified by a similar method.
Take the window function $\phi$ to be a nonzero smooth function with Fourier support in $B(0,10^{-10})$.
Choose a smooth function $h$ with Fourier support on $3/4\leqslant |\xi|\leqslant 4/3$,
satisfying $\hat{h}(\xi)=1$ on $7/8\leqslant |\xi|\leqslant 8/7$.
By the assumption of $h$, we have
$\Delta_jh_j=h_j$ for all $j\in \mathbb{N}$, where $\widehat{h_j}(\xi)=\hat{h}(\xi/2^j)$.

Denote $F=\sum_{j=0}^{\infty}a_jh_j$, where $\vec{a}=\{a_j\}_{j=0}^{\infty}$ is a truncated (only finite nonzero items) sequence of nonnegative real number.
Observing that
\begin{equation}
  h_p\sim F_{p,2}\subset F_{p,p}= B_{p,p},
\end{equation}
we have
\begin{equation}
\begin{split}
\|F\|^p_{h_p}
\gtrsim
\|F\|^p_{B_{p,p}}
\sim  &
\sum_{k\in \mathbb{N}}\|\Delta_jF\|_{L_p}^p
\\
= &
\sum_{k\in \mathbb{N}}a_j^p\|h_j\|_{L_p}^p
\sim
\sum_{k\in \mathbb{N}}a_j^p2^{jn(p-1)}
\sim
\|\{a_j\}\|^p_{l_p^{n(1-1/p),1}}.
\end{split}
\end{equation}

By the definition of $h$, we have at most one $j\in \mathbb{Z}^{+}$ such that $V_{\phi}h_j(x,\xi) \neq 0$ for every fixed $\xi$.
We also have $\langle\xi\rangle \sim 2^j$ when $V_{\phi}h_j(x,\xi) \neq 0$.
Moreover, for a fixed $\xi$ such that  $V_{\phi}h_j(x,\xi) \neq 0$ for some $j$, we have the following estimate:
\begin{equation}
  |V_{\phi}F(x,\xi)|
  =
  |V_{\phi}(a_jh_j)(x,\xi)|
  \lesssim
  a_j\langle x\rangle^{-\mathscr{L}},
\end{equation}
by a same argument as in subsection 4.1.
Using the fact $|\{\xi: V_{\phi}h_j(x,\xi) \neq 0\}|\sim 2^{jn}$, we deduce that
\begin{equation}
  \begin{split}
    \|V_{\phi}F(x,\xi)\|_{L_q^s}
    \lesssim &
    \langle x\rangle^{-\mathscr{L}}\left\|\{a_j|\{\xi: V_{\phi}h_j(x,\xi)\neq 0\}|^{1/q}\}\right\|_{l_q^{s,1}}
    \\
    \lesssim &
    \langle x\rangle^{-\mathscr{L}}\left\|\{a_j2^{jn/q}\}\right\|_{l_q^{s,1}}
    =\langle x\rangle^{-\mathscr{L}}\left\|\{a_j\}\right\|_{l_q^{s+n/q,1}}.
  \end{split}
\end{equation}
Now, we give the estimate for $W_{p,q}^s$ norm:
\begin{equation}
\begin{split}
\|F\|_{W_{p,q}^s}
= &
\left(\int_{\mathbb{R}^n}\|V_{\phi}F(x,\xi)\|^p_{L_{\xi,q}^s}\right)^{1/p}
\\
\lesssim &
\left\|\{a_j\}\right\|_{l_q^{s+n/q,1}} \left(\int_{\mathbb{R}^n}\langle x\rangle^{-\mathscr{L}}\right)^{1/p}
\\
\lesssim &
\left\|\{a_j\}\right\|_{l_q^{s+n/q,1}}.
\end{split}
\end{equation}
We have
\begin{equation}
\|\{a_j\}\|_{l_p^{n(1-1/p),1}}\lesssim \|F\|_{h_p}\lesssim \|F\|_{W_{p,q}^s}\lesssim \|\{a_j\}\|_{l_q^{s+n/q,1}}.
\end{equation}
By the arbitrary of $\vec{a}$, we obtain the embedding relation $l_q^{s+n/q,1} \subset l_p^{n(1-1/p),1}$.
\end{proof}

\begin{proposition}\label{proposition, forbidden for wiener by probability}
  Let $0<p<\infty$, $0<q\leqslant  \infty$.
  Then we have
\begin{enumerate}
  \item
  $W_{p,q}^s\subset h_p \Longrightarrow l_q^{s,0} \subset l_2^{0,0}$,
  \item
  $h_p\subset W_{p,q}^s \Longrightarrow l_2^{0,0} \subset l_q^{s,0}$ for $1<p< \infty$.
\end{enumerate}
Moreover, the statement $(1)$ is true if $p=1$ and $h_p$ is replaced by $L_{1}$.
\end{proposition}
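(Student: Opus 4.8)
The plan is to test both inclusions against a family of \emph{randomized, frequency–translated bumps}. Fix $h\in\mathscr{S}$ with $\widehat h$ supported in $B(0,10^{-10})$ and $\widehat h\not\equiv 0$, and take the window $\phi\in\mathscr{S}\backslash\{0\}$ with $\widehat\phi$ supported in $B(0,10^{-10})$ as well. Given a truncated nonnegative sequence $\vec a=\{a_k\}_{k\in\mathbb{Z}^n}$ and a sign pattern $\omega=\{\omega_k\}_{k\in\mathbb{Z}^n}\in\{\pm1\}^{\mathbb{Z}^n}$, put
\[
F_\omega(x)=h(x)\sum_{k\in\mathbb{Z}^n}a_k\omega_k e^{2\pi ik\cdot x},
\]
a finite sum and hence a Schwartz function. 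I will prove the \emph{uniform} equivalence $\|F_\omega\|_{W_{p,q}^s}\sim\|\vec a\|_{l_q^{s,0}}$ (constants independent of $\omega$), together with the probabilistic identity $\mathbb{E}_\omega\|F_\omega\|_{L_p}^p\sim\|\vec a\|_{l_2^{0,0}}^p$; feeding these into the hypothesized embedding yields the asserted sequence embedding. The whole point of the randomization is that it makes the $W_{p,q}^s$-norm insensitive to the signs while the $L_p$-norm detects the $\ell^2$-structure.

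First, the Wiener amalgam norm. Since $V_\phi F_\omega(x,\xi)=\sum_k a_k\omega_k V_\phi h(x,\xi-k)$ and $V_\phi h(x,\cdot)=\mathscr{F}\big(h\,\overline{\phi(\cdot-x)}\big)$ has $\xi$-support contained in $B(0,2\cdot10^{-10})$, for each fixed $\xi$ at most one summand is nonzero; hence $|V_\phi F_\omega(x,\xi)|^q\langle\xi\rangle^{sq}=\sum_k a_k^q|V_\phi h(x,\xi-k)|^q\langle\xi\rangle^{sq}$, with the obvious modification when $q=\infty$. Integrating in $\xi$ and using the crude comparison $\langle\xi\rangle\sim\langle k\rangle$ whenever $V_\phi h(x,\xi-k)\neq0$, one gets $\|V_\phi F_\omega(x,\cdot)\|_{L_q^s}\sim\|\vec a\|_{l_q^{s,0}}\,G(x)$, where $G(x)=\big(\int_{B(0,2\cdot10^{-10})}|V_\phi h(x,\eta)|^q\,d\eta\big)^{1/q}$ is a fixed, rapidly decaying function which is not identically zero. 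Taking the $L_p$-norm in $x$ gives $\|F_\omega\|_{W_{p,q}^s}\sim\|\vec a\|_{l_q^{s,0}}\|G\|_{L_p}\sim\|\vec a\|_{l_q^{s,0}}$, uniformly in $\omega$.

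Next, the Lebesgue/Hardy side. Because $F_\omega$ is continuous and the $\psi$ in the definition of $h_p$ has $\int_{\mathbb{R}^n}\psi\neq0$, letting $t\to0$ in $\psi_t\ast F_\omega\to\big(\int_{\mathbb{R}^n}\psi\big)F_\omega$ shows $|F_\omega(x)|\lesssim\sup_{0<t<1}|\psi_t\ast F_\omega(x)|$, hence $\|F_\omega\|_{L_p}\lesssim\|F_\omega\|_{h_p}$. On the other hand, applying Khinchin's inequality pointwise in $x$ to $\sum_k a_k\omega_k e^{2\pi ik\cdot x}$ and then Fubini's theorem,
\[
\mathbb{E}_\omega\|F_\omega\|_{L_p}^p=\int_{\mathbb{R}^n}|h(x)|^p\,\mathbb{E}_\omega\Big|\sum_k a_k\omega_k e^{2\pi ik\cdot x}\Big|^p\,dx\sim\int_{\mathbb{R}^n}|h(x)|^p\Big(\sum_k a_k^2\Big)^{p/2}dx=\|h\|_{L_p}^p\|\vec a\|_{l_2^{0,0}}^p .
\]
In particular there is a sign pattern $\omega_0$ with $\|F_{\omega_0}\|_{h_p}\gtrsim\|F_{\omega_0}\|_{L_p}\gtrsim\|\vec a\|_{l_2^{0,0}}$.

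It remains to assemble the pieces. If $W_{p,q}^s\subset h_p$, then $\|\vec a\|_{l_2^{0,0}}\lesssim\|F_{\omega_0}\|_{h_p}\lesssim\|F_{\omega_0}\|_{W_{p,q}^s}\lesssim\|\vec a\|_{l_q^{s,0}}$ for every truncated $\vec a$, i.e. $l_q^{s,0}\subset l_2^{0,0}$; the case $p=1$ with $L_1$ in place of $h_1$ is the same argument with $\|\cdot\|_{L_1}$ throughout (there the maximal-function comparison is not even needed). If instead $h_p\subset W_{p,q}^s$ with $1<p<\infty$, then $h_p$ coincides with $L_p$ up to equivalent norms, so for every $\omega$ we have $\|\vec a\|_{l_q^{s,0}}\sim\|F_\omega\|_{W_{p,q}^s}\lesssim\|F_\omega\|_{L_p}$; raising to the power $p$ and taking expectation gives $\|\vec a\|_{l_q^{s,0}}^p\lesssim\mathbb{E}_\omega\|F_\omega\|_{L_p}^p\sim\|\vec a\|_{l_2^{0,0}}^p$, that is $l_2^{0,0}\subset l_q^{s,0}$. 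The step that really needs care — and the reason the construction is chosen so carefully — is the uniform two-sided bound $\|F_\omega\|_{W_{p,q}^s}\sim\|\vec a\|_{l_q^{s,0}}$: it rests on the exact disjointness of the $\xi$-supports of the blocks $V_\phi(a_k\omega_k e^{2\pi ik\cdot}h)$ (forced by the very small Fourier supports of $h$ and $\phi$) and on $\langle\xi\rangle\sim\langle k\rangle$ on that frequency patch. Everything else is a routine combination of Khinchin's inequality, Fubini, and the elementary inequality $\|F\|_{L_p}\lesssim\|F\|_{h_p}$ valid for continuous $F$; the routine modifications for $q=\infty$ should also be recorded.
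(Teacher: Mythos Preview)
Your argument is correct and follows essentially the same route as the paper: the same family of randomized frequency-translated bumps $F_\omega=h\sum_k a_k\omega_k e^{2\pi ik\cdot x}$ (the paper writes $g_k$ with $\widehat{g_k}(\xi)=\widehat g(\xi-k)$, which is exactly your modulated bump), the same disjoint-frequency computation for the $W_{p,q}^s$ norm, and the same use of Khinchin's inequality for the $L_p$ side. Your write-up is in fact a bit more complete, since you spell out the two-sided equivalence $\|F_\omega\|_{W_{p,q}^s}\sim\|\vec a\|_{l_q^{s,0}}$ that is needed for part~(2), whereas the paper only records the upper bound and defers the rest to symmetry.
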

\begin{proof}
  We only give the proof for statement $(1)$, since the other cases can be verified similarly.
  Choose the window function $\phi$ to be a nonzero smooth function satisfying that $\text{supp}\widehat{\phi} \subset B(0,10^{-10})$.
  Take $g$ to be a nonzero smooth function with $\text{supp}\widehat{g} \subset B(0,10^{-10})$. Denote $\widehat{g_k}(\xi)=\widehat{g}(\xi-k)$.
  Let $\vec{\omega}=\{\omega_k\}_{k\in \mathbb{Z}^n}$ be a sequence of independent random variables (for instance, one can choose the Rademacher functions).
  For a truncated sequence of positive number $\vec{a}=\{a_k\}_{k\in \mathbb{Z}^n}$, we define
  \begin{equation}
    G^{\vec{\omega}}=\sum_{k\in \mathbb{Z}^n}\omega_k a_k g_k.
  \end{equation}
  By the definition of $\phi$ and $g_k$, there are at almost one $k\in \mathbb{Z}^n$
  such that $V_{\phi}(g_k)(x,\xi)\neq 0$ for every fixed $\xi$.
  We also have $\langle\xi \rangle \sim \langle k\rangle$ when $V_{\phi}(g_k)(x,\xi)\neq 0$.
  Moreover, by the definition of short time Fourier transform, we deduce that
  \begin{equation}
    |V_{\phi}(g_k)(x,\xi)|
    \lesssim (|g_k|\ast |\phi|)(x)=(|g|\ast |\phi|)(x)\lesssim \langle x\rangle^{-\mathscr{L}},
  \end{equation}
  where we use the rapid decay of $g$ and $\phi$.
  Thus,
  \begin{equation}
  \begin{split}
    \|V_{\phi}G^{\vec{\omega}}(x,\xi)\|_{L_{\xi,q}^s}
    \lesssim &
    \langle x\rangle^{-\mathscr{L}}\left\|\{a_k|\{\xi: V_{\phi}g_k(x,\xi)\neq 0\}|^{1/q}\}\right\|_{l_q^{s,0}}
    \\
    \lesssim &
    \langle x\rangle^{-\mathscr{L}}\left\|\{a_k\}\right\|_{l_q^{s,0}},
  \end{split}
\end{equation}
where we use the fact that $|\{\xi: V_{\phi}g_k(x,\xi) \neq 0\}|\lesssim 1$.  By the definition of Wiener amalgam space, we have
\begin{equation}\label{for proof, theorem Besov, 11}
\begin{split}
  \|G^{\vec{\omega}}\|_{W_{p,q}^s}
  =&
  \left\|\|V_{\phi}G^{\vec{\omega}}(x,\xi)\|_{L_{\xi,q}^s}\right\|_{L_{x,p}}
  \\
  \lesssim &
  \|\langle x\rangle^{-\mathscr{L}}\|_{L_p}\cdot \left\|\{a_k\}\right\|_{l_q^{s,0}}
  \lesssim
  \left\|\{a_k\}\right\|_{l_q^{s,0}}.
  \end{split}
\end{equation}
On the other hand, observing that $G^{\vec{\omega}}$ is a Schwartz function, by the definition of local Hardy space, we obtian
$\|G^{\vec{\omega}}\|_{L_p}\lesssim \|G^{\vec{\omega}}\|_{h_p}$. Thus, $W_{p,q}^s\subset h_p$ implies
\begin{equation}\label{for proof, theorem Besov, 13}
  \|G^{\vec{\omega}}\|_{L_p}\lesssim \|G^{\vec{\omega}}\|_{W_{p,q}^s}.
\end{equation}
Using Khinchin's inequality, we deduce that
\begin{equation}\label{for proof, theorem Besov, 12}
  \begin{split}
    \mathbb{E}(\|G^{\vec{\omega}}\|^p_{L_p})
    = &
    \int_{\mathbb{R}^n}\mathbb{E}(|G^{\vec{\omega}}|^p)dx
    \\
    \sim &
    \int_{\mathbb{R}^n}\left(\sum_{k\in \mathbb{Z}^n}|a_kg_k|^2\right)^{\frac{p}{2}} dx
        \\
    \sim &
    \int_{\mathbb{R}^n}\left(\sum_{k\in \mathbb{Z}^n}|a_kg|^2\right)^{\frac{p}{2}} dx
    \sim
    \|\vec{a}\|^p_{l_2^{0,0}}\cdot \|g\|^p_{L^p}
    \sim
    \|\vec{a}\|^p_{l_2^{0,0}}.
  \end{split}
\end{equation}
Putting (\ref{for proof, theorem Besov, 11}) and (\ref{for proof, theorem Besov, 12}) into (\ref{for proof, theorem Besov, 13}), we obtain that
\begin{equation}
  \|\vec{a}\|^p_{l_2^{0,0}}
  \sim
  \mathbb{E}(\|G^{\vec{\omega}}\|^p_{L_p})
  \lesssim
  \mathbb{E}(\|G^{\vec{\omega}}\|^p_{W_{p,q}^s})
  \lesssim
  \left\|\{a_k\}\right\|^p_{l_q^{s,0}},
\end{equation}
which implies the desired conclusion.
\end{proof}

\bigskip

\textbf{Necessity for $W^s_{p,q}\subset h_p$:}

If $W^{s}_{p,q}\subset h_p$ holds, we obtain
\begin{equation}
  W^{s+\epsilon}_{p,q}\subset F_{p,2}^{\epsilon}\subset B_{p,q},
\end{equation}
for any $\epsilon>0$.
Using Theorem \ref{theorem, embedding between Wiener and Besov}, we deduce $s+\epsilon\geqslant \alpha(p,q)$.
Thus $s\geqslant \alpha(p,q)$ follows by letting $\epsilon\rightarrow 0$.

In addition, we use Proposition \ref{proposition, forbidden for wiener} to deduce $l_q^{s+n/q,1} \subset l_p^{n(1-1/p),1}$ for $2\leqslant p<\infty$.
If $1/q<1/p\leqslant 1/2$, we use Lemma \ref{lemma, Sharpness of embedding, for dyadic decomposition} to deduce that $s>n(1-1/p-1/q)=\alpha(p,q)$.
We also use Proposition \ref{proposition, forbidden for wiener by probability} to deduce that
$l_q^{s,0} \subset l_2^{0,0}$ for $0<p<\infty$, which implies $s>n(1/2-1/q)=\alpha(p,q)$ for $1/q<1/2$.

Combining with the above estimates. We obtain that $s\geqslant \alpha(p,q)$ for $1/q \geqslant 1/p\wedge 1/2$, and $s> \alpha(p,q)$ for $1/q < 1/p\wedge 1/2$.
\medskip

\textbf{Necessity for $h_p\subset W^s_{p,q}$:}

If $h_p\subset W^{s}_{p,q}$ holds, we have
\begin{equation}
  B_{p,q}\subset F_{p,2}^{-\epsilon} \subset W_{p,q}^{s-\epsilon},
\end{equation}
for any $\epsilon>0$.
Using Theorem \ref{theorem, embedding between Wiener and Besov}, we deduce $s-\epsilon\leqslant \beta(p,q)$.
Thus $s\leqslant \beta(p,q)$ follows by letting $\epsilon\rightarrow 0$.

In addition, we use Proposition \ref{proposition, forbidden for wiener} to deduce $l_p^{n(1-1/p),1}\subset l_q^{s+n/q,1} $ for $0< p\leqslant 2$.
If $1/2\leqslant 1/p< 1/q$, we use Lemma \ref{lemma, Sharpness of embedding, for dyadic decomposition} to deduce that $s<n(1-1/p-1/q)=\beta(p,q)$.
We also use Proposition \ref{proposition, forbidden for wiener by probability} to deduce that
$l_2^{0,0} \subset l_q^{s,0}$ for $p>2$, which implies $s<n(1/2-1/q)=\alpha(p,q)$ for $1/q>1/2$.

Combining with the above estimates. We obtain that $s\leqslant \beta(p,q)$ for $1/q \leqslant 1/p\vee  1/2$, and $s< \beta(p,q)$ for $1/q > 1/p\vee 1/2$.

\section{Inclusion relations between $W_{p,q}^s$ and $L_p$ for $1\leqslant p\leqslant \infty$}
Recalling Theorem \ref{theorem, embedding between Wiener and hp} and the fact that
$h_p\sim L_p$ for $p\in (1,\infty)$, to make clear the inclusion relations between $W_{p,q}^s$ and $L_p$ for $1\leqslant p\leqslant \infty$,
we only need to deal with the case for $p=1$ or $p=\infty$, which is just the content of Theorem \ref{theorem, embedding between Wiener and L1 or L infty}.
Now, we give the proof for Theorem \ref{theorem, embedding between Wiener and L1 or L infty}.

\textbf{Proof for Statement $(1)$:}
The sufficiency is a direct conclusion of Theorem \ref{theorem, embedding between Wiener and hp}.
In fact, using Theorem \ref{theorem, embedding between Wiener and hp} and the embedding relations $h_1\subset L_1$, we obtain
\begin{equation}
  W_{1,q}^s\subset h_1\subset L_1
\end{equation}
for $s\geqslant \alpha(1,q)$ with strict inequality when $1/q < 1/2$.

On the other hand,
observing that
\begin{equation}
  W_{1,q}^s\subset h_1\subset B_{1,q}^{-\epsilon}
\end{equation}
for $\epsilon>0$.
Then $s\geqslant -\epsilon$
follows by Theorem \ref{theorem, embedding between Wiener and Besov}.
By the arbitrary of $\epsilon$, we obtain $s\geqslant 0=\alpha(1,q)$ for $1/q\geqslant 1/2$.
In addition, we use Proposition \ref{proposition, forbidden for wiener by probability} to deduce that $l_q^{s,0}\subset l_2^{0,0}$,
which implies $s> n(1/2-1/q)=\alpha(1,q)$ for $1/q < 1/2$.

\textbf{Proof for Statement $(2)$:}
By the fact $W_{\infty,1}\subset L_{\infty}$ in Theorem B, we have
\begin{equation}
  W_{\infty,q}^{\alpha(\infty,q)}=W_{\infty,q}^{0}\subset W_{\infty,1}\subset L_{\infty},
\end{equation}
for $1/q\geqslant 1$.
Recalling $W_{\infty,q}^{s}\subset L_{\infty}$ for $s>\alpha(p,q)$, $1/q<1$, we actually have
$W_{\infty,q}^{s}\subset L_{\infty}$ for $s\geqslant \alpha(\infty,q)$ with strict inequality when $1/q < 1$.

For the necessity part, $s> \alpha(\infty,q)$ for $1/q<1$ is known in Theorem B.
In addition,
recalling $W_{\infty,q}^{s}\subset L_{\infty}\subset B_{\infty, q}^{-\epsilon}$ for any fixed $\epsilon>0$,
we use Theorem \ref{theorem, embedding between Wiener and Besov} to deduce that $s\geqslant -\epsilon$.
Letting $\epsilon \rightarrow 0$, we have $s\geqslant 0=\alpha(\infty, q)$ for $1/q\geqslant 1$.

\textbf{Proof for Statement $(3)$:}
The sufficiency is known in Theorem B except the case $1/p>1$.
By the embedding $W_{1,\infty}^{0}\subset W_{1,q}^{\beta(1,q)-\epsilon}$ for $\epsilon>0$, we actually have
\begin{equation}
  L_1\subset W_{1,\infty}^{0}\subset W_{1,q}^{\beta(1,q)-\epsilon}
\end{equation}
for all $q\neq \infty$, where $\epsilon$ is any fixed positive number.

For the necessity, $L_1\subset W_{1,\infty}^s$ implies $s\leqslant \beta(1,\infty)$ is known in Theorem B.
On the other hand, we recall that
$L_1\subset W_{1,q}^s$ implies $s<\beta(1,q)$ for $1\leqslant q<\infty$.
For $q<1$, we verify $s<\beta(1,q)$ by a contradiction argument.
In fact, if $L_1\subset W_{1,q}^{\beta(1,q)}$ holds for $q<1$, we use complex interpolation and the fact $L_1\subset W_{1,\infty}^{\beta(1,\infty)}$ to conclude
$L_1\subset W_{1,\tilde{q}}^{\beta(1,\tilde{q})}$ for some $1<\tilde{q}<\infty$, which leads to a contradiction.

\textbf{Proof for Statement $(4)$:}
By Theorem B, we know that $L_{\infty}\subset W_{\infty,q}^{\beta(\infty,q)}$ for $1/q\leqslant 1/2$.
For $1/q> 1/2$, recalling $W_{\infty,2}^{\beta(\infty,2)}\subset  W_{\infty,q}^{\beta(\infty,q)-\epsilon}$ for $1/q>1/2$ and $\epsilon>0$,
we conclude that
\begin{equation}
  L_{\infty}\subset W_{\infty,2}^{\beta(\infty,2)}\subset  W_{\infty,q}^{\beta(\infty,q)-\epsilon},
\end{equation}
for all $1/q>1/2$, where $\epsilon$ is any fixed positive number.

For the necessity, observing that $B_{\infty,q}^{\epsilon}\subset L_{\infty}\subset W_{\infty, q}$.
By Theorem \ref{theorem, embedding between Wiener and Besov}, $B_{\infty,q}^{\epsilon}\subset W_{\infty, q}^s$ implies $s\leqslant \beta(\infty,q)+\epsilon$.
Letting $\epsilon \rightarrow 0$, we obtain $s\leqslant \beta(\infty,q)$.
Now, we deal with the case $1/q>1/2$.
Recalling $L_p\subset W_{p,2}$ for $1/p\leqslant 1/2$, if $L_{\infty}\subset W_{\infty,q}^\beta(\infty,q)$ for certain $1/q>1/2$,
an interpolation argument then yields $L_{\tilde{p}}\subset W_{\tilde{p},\tilde{q}}^{\beta(\tilde{p},\tilde{q})}$ for some $1/\tilde{p}< 1/2$, $1/\tilde{q}>1/2$,
which leads a contradiction with the fact that $L_p\subset W_{p,q}^s$ implies $s<\beta(p,q)$ for $1/p<1/2$.

\begin{remark}
  Corollary \ref{corollary, weighted Hausdorff-Young inequality} follows by Theorem \ref{theorem, embedding between Wiener and hp}, \ref{theorem, embedding between Wiener and L1 or L infty},
  and Corollary \ref{corollary, mild characterization}.
  Corollary \ref{corollary, Inequality for Fourier series} follows by Theorem \ref{theorem, embedding between Wiener and hp}, \ref{theorem, embedding between Wiener and L1 or L infty}, and Proposition \ref{proposition, fourier series}.
\end{remark}

\end{document}